\begin{document}
	\title{IMEX-RK methods for Landau-Lifshitz equation with arbitrary damping\thanks{Received date, and accepted date (The correct dates will be entered by the editor).}}
	
	
	\author{Yan Gui\thanks{School of Mathematical Sciences, Soochow University, Suzhou,China, (guiyan\_maths@163.com).}
		\and Cheng Wang\thanks{Mathematics Department, University of Massachusetts, North Dartmouth, MA 02747, USA, (cwang1@umassd.edu).}
		\and Jingrun Chen\thanks{School of Mathematical Sciences, University of Science and Technology of China, Hefei, China and Suzhou Institute for Advanced Research, University of Science and Technology of China, Suzhou, China, (jingrunchen@ustc.edu.cn).}}

	\pagestyle{myheadings} \markboth{IMEX-RK METHODS FOR LL EQUATION WITH ARBITRARY DAMPING}{YAN GUI, CHENG WANG, AND JINGRUN CHEN} \maketitle
	
	\begin{abstract}
		Magnetization dynamics in ferromagnetic materials is modeled by the Landau-Lifshitz (LL) equation, a nonlinear system of partial differential equations. Among the numerical approaches, semi-implicit schemes are widely used in the micromagnetics simulation, due to a nice compromise between accuracy and efficiency. At each time step, only a linear system needs to be solved and a projection is then applied to preserve the length of magnetization. However, this linear system contains variable coefficients and a non-symmetric structure, and thus an efficient linear solver is highly desired. If the damping parameter becomes large, it has been realized that efficient solvers are only available to a linear system with constant, symmetric, and positive definite (SPD) structure. In this work, based on the implicit-explicit Runge-Kutta (IMEX-RK) time discretization, we introduce an artificial damping term, which is treated implicitly.  The remaining terms are treated explicitly. This strategy leads to a semi-implicit scheme with the following properties: (1) only a few linear system with constant and SPD structure needs to be solved at each time step; (2) it works for the LL equation with arbitrary damping parameter; (3) high-order accuracy can be obtained with high-order IMEX-RK time discretization. Numerically, second-order and third-order IMEX-RK methods are designed in both the 1-D and 3-D domains. A comparison with the backward differentiation formula scheme is undertaken, in terms of accuracy and efficiency. The robustness of both numerical methods is tested on the first benchmark problem from National Institute of Standards and Technology. The linearized stability estimate and optimal rate convergence analysis are provided for an alternate IMEX-RK2 numerical scheme as well.
	\end{abstract}
	\begin{keywords}  Micromagnetics simulation; Landau-Lifshitz equation; Implicit-explicitly Runge-Kutta scheme;
		Second-order accuracy; Third-order accuracy; Hysteresis loop.
	\end{keywords}
	
	\begin{AMS} 35K61; 65N06; 65N12
	\end{AMS}
	\section{Introduction}\label{intro}
	
	Ferromagnetic materials have the intrinsic magnetic order (magnetization), whose dynamics is modeled by the Landau-Lifshitz (LL) equation \cite{Landau1935On, gilbert1955lagrangian}. The equation stands for a non-local nonlinear problem with non-convex constraint and possible degeneracy.
	The past few decades have witnessed the progress of numerical methods for the LL equation; see \cite{An2022, kruzik2006recent, CJreview2007, cimrak2007survey} for reviews and references therein.
	There are explicit algorithms (e.g. \cite{alouges2006convergence,bartels2008numerical}), fully implicit ones (e.g. \cite{bartels2006convergence, fuwa2012finite}), and semi-implicit schemes (e.g. \cite{weinan2001numerical, wang2001gauss, cimrak2005error, gao2014optimal, boscarino2016high, an2016optimal, An2021}). An explicit method updates the magnetization without any need to solver linear or nonlinear system of equations, while it suffers from the severe stability constraint on the temporal step-size. Implicit schemes are unconditionally stable and preserve the physical properties of the LL equation, such as energy dissipation and length conservation, while a nonlinear system of equation needs to be solved at each time step. Semi-implicit schemes only solve a linear system of equations at each time step and are unconditionally stable in micromagnetics simulations. Therefore, a semi-implicit method provides a nice balance between accuracy and efficiency.
	
	One typical semi-implicit method is the Gauss-Seidel projection method (GSPM) \cite{weinan2001numerical, wang2001gauss, garcia2003improved, Li2020TwoIG}. Using the vectorial structure of LL equation, GSPM achieves unconditional stability in micromagnetics simulations; and only a few linear systems need to be solved, with constant, symmetric, and positive definite coefficients. However, the temporal accuracy of GSPM is limited to the first-order. Another semi-implicit approach is based on the backward differentiation formula (BDF) for temporal derivative and the one-sided extrapolation for nonlinear terms \cite{xie2020second,chen2021convergence,Lubich2021}. High-order accuracy can be obtained in time using the BDF approach. However, only first-order and second-order BDFs are unconditionally stable. Meanwhile, the linear system of equations has variable coefficients and non-symmetric structure, thus no fast solver is available. Meanwhile, for time-dependent nonlinear partial differential equations in general, implicit-explicit (IMEX) schemes have been extensively applied \cite{boscarino2016high}. The basic idea is to treat dominant linear term implicitly and the remaining nonlinear terms explicitly. For the LL equation, the second-order IMEX has been studied in \cite{xie2020second}. Two linear systems, with variable coefficients and non-symmetric structure, need to be solved. Thus IMEX2 can hardly compete with BDF2 in terms of accuracy and efficiency.
	
	In this work, we propose an implicit-explicit Runge-Kutta (IMEX-RK) scheme for solving the LL equation, based on the recent development of IMEX-RK method for the nonlinear diffusion equation \cite{wang2020local}. The basic idea is to introduce an artificial linear diffusion term and treat it implicitly. All the remaining terms are treated explicitly. RK methods are employed for the time discretization. Only a few linear systems, with constant coefficients and SPD structure, need to be solved. In the existing literature, such linear numerical schemes have only be reported for the large damping parameter~ \cite{cai2022second}. Instead, the IMEX-RK method works for the LL equation general damping parameters, which is very important since the damping parameter may be small in most magnetic materials \cite{Brown1963micromagnetics}. Moreover, higher-order numerical schemes could be constructed using RK approaches. Numerical results have demonstrated an advantage of IMEX-RK schemes over the BDF2 approach in terms of accuracy and efficiency. The performance of IMEX-RK schemes has also been verified over a large range of artificial damping parameters and the first benchmark problem for a ferromagnetic thin film material from National Institute of Standards and Technology (NIST).
	
	Because of the robust numerical results, a theoretical analysis of the proposed IMEX-RK numerical schemes is highly desired. However, such an analysis turns out to be very challenging, due to the multi-stage nature, as well as the highly complicated nonlinear terms in the vector form. Some convergence analysis works have been reported for the IMEX-RK numerical methods to various nonlinear PDEs in the existing literature, such as the convection-diffusion equation~\cite{WangH2016}, the porous media equation~\cite{Nan2021, WangH2019}, etc. Meanwhile, the degree of nonlinearity of the LL equation is even higher than these reported PDE models, and the associated theoretical analysis is expected to be more challenging. In this article, we choose an alternate IMEX-RK2 numerical algorithm, in which the explicit part satisfies the strong stability-preserving (SSP) property~\cite{Conde2017, gottlieb01a} (denoted as the SSP-IMEX-RK2 scheme), and provide a linearized stability estimate and convergence analysis for the SSP-IMEX-RK2 method to a simplified version of the LL equation. Many state-of-arts techniques, such as a rough error estimate to control the discrete $\ell^\infty$ and $W_h^{1,8}$ norms of the numerical solution, combined with a refined error estimate, have to be applied to obtain an optimal rate convergence analysis for the SSP-IMEX-RK2 scheme to the nonlinear LL equation.

    The rest of paper is organized as follows. In Section~\ref{S:method}, we introduce the LL model and then propose the IMEX-RK schemes, including the SSP-IMEX-RK2 algorithm. For the convenience of comparison, we also briefly review the BDF2 method. Accuracy and efficiency tests of the IMEX-RK schemes are provided in Section~\ref{S:Numerical tests} with a detailed check for the dependence on the artificial damping parameter. Section~\ref{section:micromagnetics simulations} is devoted to the micromagnetics simulations of the IMEX-RK methods, including equilibrium structures and the first benchmark problem from NIST.  The linearized stability estimate and the convergence analysis for the SSP-IMEX-RK2 scheme to a simplified version of nonlinear LL equation are provided in Section~\ref{sec: convergence}. Finally, some concluding remarks are made in Section~\ref{section:conclusion}.

\section{The model and the proposed methods}\label{S:method}

\subsection{Landau-Lifshitz equation}\label{model}
The LL equation is a phenomenological model for magnetization dynamics in a ferromagnetic material, which takes the following form
\begin{align}
	\boldsymbol{m}_{t}=-\boldsymbol{m} \times \boldsymbol{h}_{\mathrm{eff}}-\alpha \boldsymbol{m} \times\left(\boldsymbol{m} \times \boldsymbol{h}_{\mathrm{eff}}\right),
	\label{eq-1}
\end{align}
with the homogeneous Neumann boundary condition
\begin{align}
	{{\left. \frac{\partial \boldsymbol m}{\partial \nu } \right|}_{\partial \Omega }}=0.
	\label{eq-2}
\end{align}
Here $\Omega$ is a bounded domain occupied by the ferromagnetic material, and the magnetization $\boldsymbol m:\Omega \subset {\mathbb{R}^{d}}\to {\mathbb{S}^{2}},d=1,2,3$ is a 3-D vector field with the length constraint $\left| \boldsymbol m \right|=1$, and $\nu$ is the unit outward normal vector along $\partial\Omega$. The first term of the right hand side in  \eqref{eq-1} is the gyromagnetic term, while the
second term represents the damping term with a dimensionless parameter $\alpha>0$.

For a uniaxial material, the free energy is given by
\begin{align*}
	F[\boldsymbol{m}]=\frac{\mu_{0} M_{s}^{2}}{2} \int_{\Omega}\left(\epsilon|\nabla \boldsymbol{m}|^{2}+Q\left(m_{2}^{2}+m_{3}^{2}\right)-\boldsymbol{h}_{s} \cdot \boldsymbol{m}-2 \boldsymbol{h}_{e} \cdot \boldsymbol{m}\right) \mathrm{d} \boldsymbol{x},
\end{align*}
in which the listed terms correspond to the exchange energy, the anisotropy energy, the magnetostatic energy, and the Zeeman energy, respectively. The effective field ${{\boldsymbol h}_{\text{eff}}}$ can be obtained by taking the variation of $F[\boldsymbol{m}]$ with respect to $\boldsymbol m$,
and consists of the exchange field, the anisotropy field, the stray field ${{\boldsymbol h}_{\text{s}}}$, and the external field ${{\boldsymbol h}_{\text{e}}}$ of the following form
\begin{align*}
	{{\boldsymbol h}_{\text{eff}}}=\epsilon \Delta \boldsymbol m-Q({{m}_{2}}{{\boldsymbol e}_{2}}+{{m}_{3}}{{\boldsymbol e}_{3}})+{{\boldsymbol h}_{s}}+{{\boldsymbol h}_{e}} .
\end{align*}

	\begin{table}[htbp]
	\centering
	\caption{Typical values of the physical parameters for Permalloy, which is an alloy of Nickel (80\%) and Iron (20\%) frequently used in magnetic storage devices.}\label{tab}
 	\begin{tabular}{|c|p{5cm}|}
		\hline
		\multicolumn{2}{|c|}{\bf Physical Parameters for Permalloy}\\
		\hline
		$K_u$ & $1.0\times10^2\;\mathrm{J/m^3}$ \\
		\hline
		$C_{ex}$& $1.3\times10^{-11}\;\mathrm{J/m}$ \\
		\hline	
		$M_s$ & $8\times 10^5\;\mathrm{A/m}$ \\
		\hline
		$\mu_0$ & $4\pi \times 10^{-7}\;\mathrm{N/A^2}$ \\
		\hline
		$\alpha$ & $0.01$ \\
		\hline
	\end{tabular}
\end{table}

In the above representation, $Q={{K}_{u}}/({{\mu }_{0}}M_{s}^{2})$ and $\epsilon ={{C}_{ex}}/({{\mu }_{0}}M_{s}^{2}{{L}^{2}})$ are the dimensionless parameters with ${{C}_{ex}}$ the exchange constant, ${{K}_{u}}$ the anisotropy constant, $L$ the diameter of ferromagnetic body, ${{\mu }_{0}}$ the permeability of vacuum, and $M_{s}$ the saturation magnetization, respectively.. Typical values of the physical parameters for Permalloy are included as shown in Table~\ref{tab}.

The two unit vectors ${{\boldsymbol e}_{2}}={{(0,1,0)}^{T}}$, ${{\boldsymbol e}_{3}}={{(0,0,1)}^{T}}$, and $\Delta$ stands for the standard Laplacian operator. The stray field ${{\boldsymbol h}_{\text{s}}}$ takes the form
\begin{align*}
	{{\boldsymbol h}_{s}}=-\nabla \int_{\Omega }{\nabla N(\boldsymbol  x-\boldsymbol y)\cdot \boldsymbol m(\boldsymbol y)d\boldsymbol y},
\end{align*}
where $N(\boldsymbol x)=-\frac{1}{4\pi \left| \boldsymbol x \right|}$ is the Newtonian potential.

For simplicity, we denote
\begin{align}\label{lowerterm}
	\emph{ \textbf{f}}=-Q({{m}_{2}}{{\boldsymbol e}_{2}}+{{m}_{3}}{{\boldsymbol e}_{3}})+{{\boldsymbol h}_{s}}+{{\boldsymbol h}_{e}}.
\end{align}
Accordingly, the LL equation can be rewritten as
\begin{align}
	{{\boldsymbol m}_{t}}=-\boldsymbol m\times (\epsilon \Delta \boldsymbol m+\emph{ \textbf{f}})-\alpha \boldsymbol m\times \boldsymbol m\times (\epsilon \Delta \boldsymbol m+\emph{ \textbf{f}}).
	\label{eq-3}
\end{align}
Thanks to the constraint $|\boldsymbol{m}|=1$, we obtain an equivalent form
\begin{align}
	\boldsymbol{m}_{t}=\alpha(\epsilon \Delta \boldsymbol{m}+\boldsymbol{f})+\alpha\left(\epsilon|\nabla \boldsymbol{m}|^{2}-\boldsymbol{m} \cdot \boldsymbol{f}\right) \boldsymbol{m}-\boldsymbol{m} \times(\epsilon \Delta \boldsymbol{m}+\boldsymbol{f}).
	\label{eq-4}
\end{align}

Some notations are introduced for discretization and numerical approximation. The 1-D domain is set as $\Omega=(0,1)$, and the 3-D version becomes $\Omega =( 0,1 )^3$, and the final time is denoted as $T$. In the 1-D domain, we divide $\Omega$ into $N$ equal parts with $h=1/N$. Fig.~\ref{fig1} displays a schematic picture of 1-D spatial grids, with ${{x}_{i-\frac{1}{2}}}=(i-\frac{1}{2})h,i=1,2,\cdots ,N$.
\begin{figure}[htbp]
	\centering
	\includegraphics[width=12cm,height=3cm]{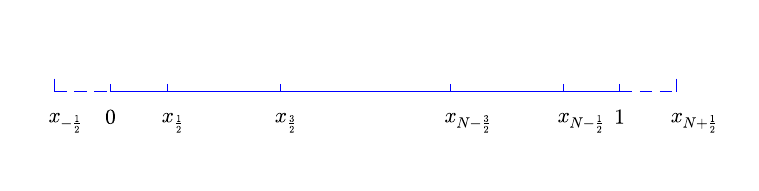}
	\caption{Spatial grids in 1D where ${{x}_{-\frac{1}{2}}}$ and ${{x}_{N+\frac{1}{2}}}$ are two ghost points.}\label{fig1}
\end{figure}
The 3-D grid points in can be similarly constructed. For convenience, we set $h_x=h_y=h_z = h$, and
${{\mathbf m}_{i,j,k}}=\mathbf m((i-\frac{1}{2})h,(j-\frac{1}{2})h,(k-\frac{1}{2})h), 0\le i,j,k\le N+1 $.

The second-order centered difference for $\Delta \mathbf m$ in the 3-D domain is formulated as
\begin{equation*}
	\begin{split}
		{{\Delta }_{h}}{{\mathbf m}_{i,j,k}} =& \frac{{{\mathbf m}_{i+1,j,k}}-2{{\mathbf m}_{i,j,k}}+{{\mathbf m}_{i-1,j,k}}}{{h^{2}}}\\
		&+\frac{{{\mathbf m}_{i,j+1,k}}-2{{\mathbf m}_{i,j,k}}+{{\mathbf m}_{i,j-1,k}}}{{h^{2}}}\\
		&+\frac{{{\mathbf m}_{i,j,k+1}}-2{{\mathbf m}_{i,j,k}}+{{\mathbf m}_{i,j,k-1}}}{{h^{2}}}.
	\end{split}
\end{equation*}
The second-order approximation of Neumann boundary condition in \eqref{eq-2} gives
\begin{eqnarray}
	\begin{aligned}
		& {{\mathbf m}_{0,j,k}}={{\mathbf m}_{1,j,k}}, \, \, \, {{\mathbf m}_{N,j,k}}={{\mathbf m}_{N+1,j,k}}, \, \, \, j,k=1,\cdots,N ,  \\
		& {{\mathbf m}_{i,0,k}}={{\mathbf m}_{i,1,k}}, \, \, \, {{\mathbf m}_{i,N,k}}={{\mathbf m}_{i,N+1,k}}, \, \, \, i,k=1,\cdots,N, \\
		& {{\mathbf m}_{i,j,0}}={{\mathbf m}_{i,j,1}}, \, \, \, {{\mathbf m}_{i,j,N}}={{\mathbf m}_{i,j,N+1}}, \, \, \, i,j=1,\cdots,N.
	\end{aligned}
	\label{BC-1}
\end{eqnarray}
The discrete gradient operator $\nabla_{h} \boldsymbol{m}$ with $\boldsymbol{m}=(u, v, w)^{T}$ becomes
\begin{equation*}
	\nabla_{h} \boldsymbol{m}_{i, j, k}=\left[\begin{array}{lll}
		\frac{u_{i+1, j, k}-u_{i-1, j, k}}{2h} & \frac{v_{i+1, j, k}-v_{i-1, j, k}}{2h} & \frac{w_{i+1, j, k}-w_{i-1, j, k}}{2h} \\
		\frac{u_{i, j+1, k}-u_{i, j-1, k}}{2h} & \frac{v_{i, j+1, k}-v_{i, j-1, k}}{2h} & \frac{w_{i, j+1, k}-w_{i, j-1, k}}{2h} \\
		\frac{u_{i, j, k+1}-u_{i, j, k-1}}{2h} & \frac{v_{i, j, k+1}-v_{i, j, k-1}}{2h} & \frac{w_{i, j, k+1}-w_{i, j, k-1}}{2h}
	\end{array}\right].
\end{equation*}
For temporal discretization, we set ${{t}^{n}}=nk$ with $k$ the step-size and $n\le \left[ \frac{T}{k} \right]$.

\subsection{Second-order backward differentiation formula method}\label{bdf}
The BDF numerical method \cite{xie2020second,chen2021convergence} is based on the BDF temporal discretization and the one-sided extrapolation. For comparison, we recall the BDF2 algorithm as
\begin{align}
	\left\{\begin{array}{c}
		\dfrac{\frac{3}{2} \tilde{\boldsymbol{m}}_{h}^{n+2}-2 \boldsymbol{m}_{h}^{n+1}+\frac{1}{2} \boldsymbol{m}_{h}^{n}}{k}=-\hat{\boldsymbol{m}}_{h}^{n+2} \times\left(\epsilon \Delta_{h} \tilde{\boldsymbol{m}}_{h}^{n+2}+\hat{\boldsymbol{f}}_{h}^{n+2}\right) \\
		-\alpha \hat{\boldsymbol{m}}_{h}^{n+2} \times\left(\hat{\boldsymbol{m}}_{h}^{n+2} \times\left(\epsilon \Delta_{h} \tilde{\boldsymbol{m}}_{h}^{n+2}+\hat{\boldsymbol{f}}_{h}^{n+2}\right)\right) \\
		\boldsymbol{m}_{h}^{n+2}=\frac{\tilde{\boldsymbol{m}}_{h}^{n+2}}{\left|\tilde{\boldsymbol{m}}_{h}^{n+2}\right|},
	\end{array}\right.
	\label{eq-5}
\end{align}
where $\tilde{\boldsymbol{m}}_{h}^{n+2}$ is an intermediate magnetization, and $\hat{\boldsymbol{m}}_{h}^{n+2}, \hat{\boldsymbol{f}}_{h}^{n+2}$ are given by the following extrapolation formula:
$$
\begin{aligned}
	\hat{\boldsymbol{m}}_{h}^{n+2} &=2 \boldsymbol{m}_{h}^{n+1}-\boldsymbol{m}_{h}^{n}, \quad
	\hat{\boldsymbol{f}}_{h}^{n+2} &=2 \boldsymbol{f}_{h}^{n+1}-\boldsymbol{f}_{h}^{n},
\end{aligned}
$$
with $\boldsymbol{f}_{h}^{n}=-Q\left(m_{2}^{n} \boldsymbol e_{2}+m_{3}^{n} \boldsymbol e_{3}\right)+\boldsymbol{h}_{s}^{n}+\boldsymbol{h}_{e}^{n}$, corresponding to \eqref{lowerterm}. The presence of cross product in \eqref{eq-5} yields a linear system of equations with variable coefficients and non-symmetric structure. Often, GMRES is employed as the numerical solver. The BDF2 algorithm \eqref{eq-5} treats both the gyromagentic and damping terms semi-implicitly, i.e., $\Delta m$ is treated implicitly while the prefactors are treated explicitly.

Since $|\boldsymbol{m}|=1$, the strength of gyromagnetic term is controlled by $\epsilon\Delta \boldsymbol{m} + \boldsymbol{f}$. Meanwhile, the strength of damping term is controlled by $\alpha(\epsilon\Delta \boldsymbol{m} + \boldsymbol{f})$. Since $\alpha < 1$ for many magnetic materials, it is reasonable to treat both the gyromagentic and damping terms semi-implicitly. However, for large $\alpha$, it is possible to treat $\alpha\epsilon\Delta \boldsymbol{m}$ part in the damping term implicitly, and the gyromagnetic term and all remaining terms explicitly, as demonstrated in \cite{cai2022second}. The stability and convergence of the scheme is proved under the condition $\alpha>3$ \cite{cai2023}. Starting with \eqref{eq-4}, the BDF2 algorithm in \cite{cai2022second} becomes
\begin{align}
	\left\{\begin{array}{l}
		\dfrac{\frac{3}{2} \tilde{\boldsymbol{m}}_{h}^{n+2}-2 \boldsymbol{m}_{h}^{n+1}+\frac{1}{2} \boldsymbol{m}_{h}^{n}}{k}=-\hat{\boldsymbol{m}}_{h}^{n+2} \times\left(\epsilon \Delta_{h} \hat{\boldsymbol{m}}_{h}^{n+2}+\hat{\boldsymbol{f}}_{h}^{n+2}\right) \\
		\quad+\alpha\left(\epsilon \Delta_{h} \tilde{\boldsymbol{m}}_{h}^{n+2}+\hat{\boldsymbol{f}}_{h}^{n+2}\right) \\
		\quad+\alpha\left(\epsilon\left|\nabla_{h} \hat{\boldsymbol{m}}_{h}^{n+2}\right|^{2}-\hat{\boldsymbol{m}}_{h}^{n+2} \cdot \hat{\boldsymbol{f}}_{h}^{n+2}\right) \hat{\boldsymbol{m}}_{h}^{n+2} \\
		\boldsymbol{m}_{h}^{n+2}=\frac{\tilde{\boldsymbol{m}}_{h}^{n+2}}{\left|\tilde{\boldsymbol{m}}_{h}^{n+2}\right|}
	\end{array}\right.
	\label{eq-6}
\end{align}
where $$
\begin{aligned}
	\hat{\boldsymbol{m}}_{h}^{n+2} &=2 \boldsymbol{m}_{h}^{n+1}-\boldsymbol{m}_{h}^{n}, \quad
	\hat{\boldsymbol{f}}_{h}^{n+2} &=2 \boldsymbol{f}_{h}^{n+1}-\boldsymbol{f}_{h}^{n}.
\end{aligned}
$$
At each time step, only one linear system with constant coefficients and SPD structure needs to be solved with fast solvers, such as fast Fourier transform (FFT).

\subsection{Implicit-explicit Runge-Kutta methods}\label{IMEX-RK}
For a given time-dependent nonlinear problem, the basic idea of implicit-explicit methods is to treat a dominant linear term implicitly and the remaining terms explicitly. The success of IMEX methods relies on the dominance of linear term that is implicitly treated \cite{ascher1997implicit}. While it is not the case for all problems, the introduction of an artificial term may help, such as the linear diffusion term introduced for the nonlinear diffusion equation \cite{wang2020local}. For the LL equation, the linear diffusion term does not dominate the magnetization dynamics, and thus a direct application of IMEX method does not work. Motivated by this observation and the work in \cite{wang2020local}, we propose to introduce an artificial diffusion term in the LL equation which will be implicitly treated while all other terms are explicitly treated explicitly. RK methods are employed for the time discretization.

For ease of description, we first list the Butcher tableau for second-order and third-order RK methods in \eqref{eq-8} and \eqref{eqn:RK3}.
\begin{equation}
	\begin{array}{c|ccc|ccc}
		0 & 0 & 0 & 0 & 0 & 0 & 0 \\
		1/2 & 0 & 1/2 & 0 & 1/2 & 0 & 0 \\
		1 & 1/2 & 0 & 1/2 & 0 & 1 & 0 \\
		\hline & 1/2 & 0 & 1/2 & 0 & 1 & 0
	\end{array}
	\label{eq-8}
\end{equation}
\begin{equation}\label{eqn:RK3}
	\begin{array}{c|ccccc|ccccc}
		\rule{0pt}{20pt}
		0 & 0 & 0 & 0 & 0 & 0 & 0 & 0 & 0 & 0 & 0 \\
		1/2 & 0 & 1/2 & 0 & 0 & 0 & 1/2 & 0 & 0 & 0 & 0 \\
		2/3 & 0 & 1/6 & 1/2 & 0 & 0 & 11/18 &1/18 & 0 & 0 & 0 \\
		1/2 & 0 & -1/2 & 1/2 & 1/2 & 0 & 5/6 & -5/6 & 1/2 & 0 & 0 \\
		1 & 0 & 3/2 & -3/2 & 1/2 & 1/2 & 1/4 & 7/4 & 3/4 & -7/4 & 0 \\
		\hline & 0 & 3/2 & -3/2 & 1/2 & 1/2 & 1/4 & 7/4 & 3/4 & -7/4 & 0
	\end{array}
\end{equation}

We add an artificial damping term $\beta\Delta \boldsymbol{m}$ into \eqref{eq-3} and rewrite the LL equation as
\begin{align}
	{{\boldsymbol m}_{t}} = \underbrace{-\boldsymbol m\times (\epsilon \Delta \boldsymbol m+\emph{ \textbf{f}})-\alpha \boldsymbol m\times \boldsymbol m\times (\epsilon \Delta \boldsymbol m+\emph{ \textbf{f}}) -\beta   {{\Delta }} \boldsymbol m}_{{N(t,\boldsymbol m)}}+\underbrace{\beta   {{\Delta }}  \boldsymbol m}_{{L(t,\boldsymbol m)}},
	\label{eq-9}
\end{align}
where the artificial term is denoted as $L(t,\boldsymbol m)$, and all the remaining terms are included in  $N(t,\boldsymbol m)$.

Therefore, at time step $t_n$, the corresponding marching algorithms in IMEX-RK2 and IMEX-RK3 methods are
\begin{equation}\label{IMEX-RK2}
	\left\{\begin{array}{l}
		\boldsymbol{\tilde{m}}_{1}=\boldsymbol{{m}}_{n} \\
		{\boldsymbol{\tilde{m}}_{2}}=\boldsymbol{{\tilde{m}}}_{1}+\frac{k}{2}\left(L(t_{n+\frac{1}{2}}, {\boldsymbol{\tilde{m}}_{2}})+N\left(t_{n}, \boldsymbol{{\tilde{m}}}_{1}\right)\right)\\
		{\boldsymbol{\tilde{m}}_{3}}=\boldsymbol{{\tilde{m}}}_{1}+\frac{k}{2}\left(L\left(t_{n}, \boldsymbol{{\tilde{m}}}_{1}\right)+L\left(t_{n+1}, {\boldsymbol{\tilde{m}}_{3}}\right)+2 N(t_{n+\frac{1}{2}}, \boldsymbol{\tilde{m}}_{2})\right)\\
		\boldsymbol{{m}}_{n+1}=\boldsymbol{{\tilde{m}}}_{1}+\frac{k}{2}\left(L\left(t_{n}, \boldsymbol{{\tilde{m}}}_{1}\right)+L\left(t_{n+1}, \boldsymbol{\tilde{m}}_{3}\right)+2 N(t_{n+\frac{1}{2}}, \boldsymbol{\tilde{m}}_{2})\right)
	\end{array}\right.,
\end{equation}
and
\begin{equation}\label{IMEX-RK3}
	\left\{\begin{array}{l}
		\boldsymbol{{\tilde{m}}}_{1}=\boldsymbol{{m}}_{n} \\ {\boldsymbol{\tilde{m}}_{2}}=\boldsymbol{{\tilde{m}}}_{1}+\frac{k}{2}\left(L(t_{n+\frac{1}{2}}, {\boldsymbol{\tilde{m}}_{2}})+N\left(t_{n}, \boldsymbol{{\tilde{m}}}_{1}\right)\right) \\ {\boldsymbol{\tilde{m}}_{3}}=\boldsymbol{{\tilde{m}}}_{1}+\frac{k}{6}L({{t}_{n+\frac{1}{2}}},
		\boldsymbol{\tilde{m}}_{2})+\frac{k}{2}L({{t}_{n+\frac{2}{3}}},{\boldsymbol{\tilde{m}}_{3}})+\frac{11k}{18}N({{t}_{n}},
		\boldsymbol{{\tilde{m}}}_{1})+\frac{k}{18}N({{t}_{n+\frac{1}{2}}},\boldsymbol{\tilde{m}}_{2})\\
		{\boldsymbol{\tilde{m}}_{4}}=\boldsymbol{{\tilde{m}}}_{1}-\frac{k}{2}L({{t}_{n+\frac{1}{2}}},
		\boldsymbol{\tilde{m}}_{2})+\frac{k}{2}L({{t}_{n+\frac{2}{3}}},\boldsymbol{\tilde{m}}_{3})
		+\frac{k}{2}L({{t}_{n+\frac{1}{2}}},{\boldsymbol{\tilde{m}}_{4}})\\
		+\frac{5k}{6}N({{t}_{n}},\boldsymbol{{\tilde{m}}}_{1})-\frac{5k}{6}N({{t}_{n+\frac{1}{2}}},
		\boldsymbol{\tilde{m}}_{2})+\frac{k}{2}N({{t}_{n+\frac{2}{3}}},\boldsymbol{\tilde{m}}_{3})\\
		{\boldsymbol{\tilde{m}}_{5}}=\boldsymbol{{\tilde{m}}}_{1}+\frac{3k}{2}L({{t}_{n+\frac{1}{2}}},
		\boldsymbol{\tilde{m}}_{2})-\frac{3k}{2}L({{t}_{n+\frac{2}{3}}},\boldsymbol{\tilde{m}}_{3})+\frac{k}{2}L({{t}_{n+\frac{1}{2}}},
		\boldsymbol{\tilde{m}}_{4})+\frac{k}{2}L({{t}_{n+1}},{\boldsymbol{\tilde{m}}_{5}})\\
		+\frac{k}{4}N({{t}_{n}},\boldsymbol{{\tilde{m}}}_{1})+\frac{7k}{4}N({{t}_{n+\frac{1}{2}}},
		\boldsymbol{\tilde{m}}_{2})+\frac{3k}{4}N({{t}_{n+\frac{2}{3}}},\boldsymbol{\tilde{m}}_{3})-\frac{7k}{4}N({{t}_{n+\frac{1}{2}}},
		\boldsymbol{\tilde{m}}_{4})\\
		\boldsymbol{{m}}_{n+1}=\boldsymbol{{\tilde{m}}}_{1}+\frac{3k}{2} L({{t}_{n+\frac{1}{2}}},\boldsymbol{\tilde{m}}_{2})-\frac{3k}{2} L({{t}_{n+\frac{2 }{3}}},\boldsymbol{\tilde{m}}_{3})+\frac{k}{2} L({{t}_{n+\frac{1 }{2}}},\boldsymbol{\tilde{m}}_{4})+\frac{k}{2}L({{t}_{n+1}},\boldsymbol{\tilde{m}}_{5})\\
		+\frac{k }{4}N({{t}_{n}},\boldsymbol{{\tilde{m}}}_{1})+\frac{7k }{4}N({{t}_{n+\frac{1 }{2}}},\boldsymbol{\tilde{m}}_{2})+\frac{3k }{4}N({{t}_{n+\frac{2 }{3}}},\boldsymbol{\tilde{m}}_{3})-\frac{7k }{4}N({{t}_{n+\frac{1 }{2}}},\boldsymbol{\tilde{m}}_{4})
	\end{array}\right. .
\end{equation}

In addition, if ${\boldsymbol f}$ is time-independent, so that the rewritten LL equation~\eqref{eq-9} is autonomous, an alternate IMEX-RK2 numerical algorithm could be chosen: 
\begin{equation}\label{SSP-IMEX-RK2}
	\left\{\begin{array}{l}
		\boldsymbol{\tilde{m}}_{1}=\boldsymbol{{m}}_{n} \\
		{\boldsymbol{\tilde{m}}_{2}}=\boldsymbol{{\tilde{m}}}_{1} +\frac{k}{4} L({\boldsymbol{\tilde{m}}_{2}}) \\
		{\boldsymbol{\tilde{m}}_{3}}=\boldsymbol{{\tilde{m}}}_{1} + \frac{k}{2} N (\boldsymbol{{\tilde{m}}}_{2} ) +\frac{k}{4} L({\boldsymbol{\tilde{m}}_{3}}) \\
		{\boldsymbol{\tilde{m}}_{4}}=\boldsymbol{{\tilde{m}}}_{1} + \frac{k}{2} \left( N ( \boldsymbol{{\tilde{m}}}_{2}
		) + N ( \boldsymbol{{\tilde{m}}}_{3} ) \right) + \frac{k}{3} \left( L({\boldsymbol{\tilde{m}}_{2}}) + L({\boldsymbol{\tilde{m}}_{3}}) + L({\boldsymbol{\tilde{m}}_{4}}) \right) \\
		\boldsymbol{{m}}_{n+1}=\boldsymbol{{\tilde{m}}}_{1} + \frac{k}{3} \left( N ( \boldsymbol{{\tilde{m}}}_{2}
		) + N ( \boldsymbol{{\tilde{m}}}_{3} ) + N ( \boldsymbol{{\tilde{m}}}_{4} ) \right) \\
		\qquad \qquad \qquad
		+ \frac{k}{3} \left( L({\boldsymbol{\tilde{m}}_{2}}) + L({\boldsymbol{\tilde{m}}_{3}}) + L({\boldsymbol{\tilde{m}}_{4}}) \right) .
	\end{array}\right.,
\end{equation}
This IMEX-RK2 algorithm contains four stages, with three intermediate numerical solutions at each time step. On the other hand, the explicit part satisfies the strong stability-preserving  property~\cite{Conde2017, gottlieb01a}; as a result, we denote it as the SSP-IMEX-RK2 scheme. In addition, this numerical method contains stronger diffusion coefficients, in comparison with the standard IMEX-RK2 algorithm~\eqref{IMEX-RK2}, and this feature will greatly facilitate a theoretical justification of numerical stability and convergence analysis, as will be presented in Section~\ref{sec: convergence}.
\vskip2mm
\begin{remark}
	In the current work, the finite difference method is employed for the spatial discretization. It is worth mentioning that other spatial discretization, such as finite element method and discontinuous Galerkin method, can also be employed.
\end{remark}
\vskip2mm
\begin{remark}
	Two linear systems with constant coefficients and SPD structure are solved in IMEX-RK2 \eqref{IMEX-RK2}. Although only one linear system solver is needed, with constant coefficients and SPD structure, the method in \cite{cai2022second} only works in the large damping parameter case. Similarly, three linear system solvers are needed in the SSP-IMEX-RK2 method~\eqref{SSP-IMEX-RK2}, and four linear system solvers are needed in the IMEX-RK3 method \eqref{IMEX-RK3}, with constant coefficients and SPD structure. It will be verified in Section~\ref{S:Numerical tests} that IMEX-RK methods work for any damping parameter.
\end{remark}

\section{Numerical results}\label{S:Numerical tests}

In this section, we provide a series of numerical experiments for IMEX-RK methods, including accuracy check, efficiency comparison, and stability test in terms of different $\beta$ values. Denote $ \boldsymbol{m}_{e}$ the exact solution and $\boldsymbol{m}_{h}$ the numerical solution. To measure the error, we introduce the following notations in the discrete case.
\vskip2mm
\begin{definition} [$\ell^2$ inner product, ${{\left\| \cdot  \right\|}_{2}}$ norm]
	For grid functions ${{\boldsymbol f}_{h}}$ and ${{\boldsymbol g}_{h}}$ that take values on a uniform numerical grid, we define
	\begin{equation*}
		\left\langle\boldsymbol{f}_{h}, \boldsymbol{g}_{h}\right\rangle=h^{d} \sum_{\mathcal{I} \in \Lambda_{d}} \boldsymbol{f}_{\mathcal{I}} \cdot \boldsymbol{g}_{\mathcal{I}} ,
	\end{equation*}
	where $\Lambda_{d}$ is the set of grid point, and $\mathcal{I}$ is an index.
\end{definition}

In turn, the ${{\left\| \cdot  \right\|}_{2}}$ norm is defined as
\begin{equation*}
	\left\|\boldsymbol{f}_{h}\right\|_{2}=\left(\left\langle\boldsymbol{f}_{h}, \boldsymbol{f}_{h}\right\rangle\right)^{1 / 2}.
\end{equation*}
In addition, the discrete ${H^{1}}$ norm is defined as
\begin{equation*}
	\left\|\boldsymbol{f}_{h}\right\|_{H^{1}}^{2}:=\left\|\boldsymbol{f}_{h}\right\|_{2}^{2}+\left\|\nabla_{h} \boldsymbol{f}_{h}\right\|_{2}^{2}.
\end{equation*}

\begin{definition}[ ${{\left\| \cdot  \right\|}_{\infty}}$ and ${{\left\| \cdot  \right\|}_{p}}$ norms in the discrete sense]
	For grid functions ${{\boldsymbol f}_{h}}$ that take values on a uniform numerical grid, we define
	\begin{equation*}
		\left\|\boldsymbol f_{h}\right\|_{\infty}=\max _{\mathcal{I} \in \Lambda_{d}}\left\|\boldsymbol{f}_{\mathcal{I}}\right\|_{\infty}, \quad\left\|\boldsymbol{f}_{h}\right\|_{p}=\left(h^{d} \sum_{I \in \Lambda_{d}}\left|\boldsymbol{f}_{\mathcal{I}}\right|^{p}\right)^{\frac{1}{p}}, 1 \leq p<+\infty .
	\end{equation*}
\end{definition}

\begin{lem}[Summation by parts]\label{summation}
	For any grid functions ${{\boldsymbol f}_{h}}$ and ${{\boldsymbol g}_{h}}$, with ${{\boldsymbol f}_{h}}$ satisfying the discrete boundary condition~\eqref{BC-1}, the following identity is valid:
	\begin{align} \label{sum1}
		\left\langle -\Delta_h \boldsymbol f_h,\boldsymbol g_h\right\rangle = \left\langle \nabla_h \boldsymbol f_h,\nabla_h \boldsymbol g_h\right\rangle .
	\end{align}
\end{lem}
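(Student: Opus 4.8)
The plan is to reduce the vector identity in~\eqref{sum1} to a one-dimensional, scalar discrete integration-by-parts statement and then reassemble. Both sides decompose linearly over the three components of $\boldsymbol f_h,\boldsymbol g_h$ and over the three coordinate directions: the discrete Laplacian $\Delta_h$ is the sum of the three second-difference operators, one per direction, while $\langle \nabla_h \boldsymbol f_h,\nabla_h \boldsymbol g_h\rangle$ is the sum of the three one-directional gradient pairings. Hence it suffices to establish, for each fixed direction and each scalar component, the $1$-D identity
\[
-h\sum_{i=1}^{N} \frac{f_{i+1} - 2 f_i + f_{i-1}}{h^2}\, g_i \;=\; h \sum_{i} (D_h^+ f_i)\,(D_h^+ g_i),
\]
where $D_h^+ f_i = (f_{i+1}-f_i)/h$ is the forward difference and the right-hand sum runs over interior edges. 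I would point out here that the gradient appearing on the right must be the one-sided (edge) difference that is the natural dual of the three-point Laplacian stencil; this is the version of $\nabla_h$ entering the discrete $H^1$ norm, and it is the operator for which the telescoping below closes exactly.

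For the core $1$-D computation, I would first rewrite the second difference as a difference of forward differences, $f_{i+1}-2f_i+f_{i-1} = h\,(D_h^+ f_i - D_h^+ f_{i-1})$, so that the left-hand side equals $-\sum_{i=1}^N (D_h^+ f_i - D_h^+ f_{i-1})\, g_i$. I would then apply Abel summation (discrete summation by parts), shifting the difference from $f$ onto $g$ via $\sum_{i=1}^N (a_i - a_{i-1}) b_i = a_N b_N - a_0 b_1 - \sum_{i=1}^{N-1} a_i (b_{i+1}-b_i)$ with $a_i = D_h^+ f_i$ and $b_i = g_i$. This produces the bulk term $\sum_{i=1}^{N-1} (D_h^+ f_i)(g_{i+1}-g_i) = h\sum_{i=1}^{N-1}(D_h^+ f_i)(D_h^+ g_i)$, which matches the right-hand side, together with two boundary contributions involving $D_h^+ f_N\, g_N$ and $D_h^+ f_0\, g_1$.

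The decisive step is to show these boundary contributions vanish, which is exactly where the discrete Neumann condition~\eqref{BC-1} enters, and only through $f$. In the $1$-D reduction that condition reads $f_0 = f_1$ and $f_N = f_{N+1}$, so the boundary forward differences satisfy $D_h^+ f_0 = (f_1-f_0)/h = 0$ and $D_h^+ f_N = (f_{N+1}-f_N)/h = 0$; both boundary terms therefore drop, leaving precisely~\eqref{sum1} after summing over components and directions. The fact that only $D_h^+ f$ appears at the boundary is a useful sanity check, since the hypothesis imposes~\eqref{BC-1} on $\boldsymbol f_h$ alone while leaving $\boldsymbol g_h$ free. I expect the only genuine difficulty to be bookkeeping rather than ideas: keeping the ghost-point indexing consistent with~\eqref{BC-1} across all three directions, and ensuring that the edge-based gradient used on the right-hand side is the one compatible with the three-point Laplacian so that the telescoped bulk term lines up with $\langle \nabla_h \boldsymbol f_h,\nabla_h \boldsymbol g_h\rangle$ exactly, rather than with some wide-stencil variant.
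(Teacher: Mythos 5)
Your proof is correct. Note first that the paper itself offers no argument for Lemma~\ref{summation}: it is stated as a standard fact (only the neighboring inverse-inequality lemma carries citations), so there is no in-paper proof to compare against; your telescoping/Abel-summation derivation is the canonical one and you execute it accurately. The 1-D core is exactly right: writing $f_{i+1}-2f_i+f_{i-1}=h\bigl(D_h^+f_i-D_h^+f_{i-1}\bigr)$ and applying Abel summation gives
\begin{equation*}
-\sum_{i=1}^{N}\bigl(D_h^+f_i-D_h^+f_{i-1}\bigr)g_i
= h\sum_{i=1}^{N-1}\bigl(D_h^+f_i\bigr)\bigl(D_h^+g_i\bigr)
-\bigl(D_h^+f_N\bigr)g_N+\bigl(D_h^+f_0\bigr)g_1 ,
\end{equation*}
and the discrete Neumann condition~\eqref{BC-1} imposed on $\boldsymbol f_h$ alone (in the 1-D reduction, $f_0=f_1$ and $f_{N+1}=f_N$) annihilates precisely the two boundary terms, with no hypothesis needed on $\boldsymbol g_h$ --- consistent with the asymmetric assumption in the lemma. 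Your side remark about which gradient appears on the right of~\eqref{sum1} deserves emphasis, because it is not mere bookkeeping: with the centered-difference operator that the paper displays as the definition of $\nabla_h$ in Section~\ref{S:method}, the identity is actually false (the checkerboard mode $f_i=(-1)^i$ lies in the kernel of the centered difference but satisfies $\langle-\Delta_hf,f\rangle=4h^{-2}\|f\|_2^2>0$), whereas it holds exactly for the edge-based forward difference, which is also the natural operator in the discrete $H^1$ norm and in the stability estimates of Section~\ref{sec: convergence}. Your proof resolves this notational inconsistency in the paper in the only way that makes the lemma true.
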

The proof of the standard inverse inequality can be obtained in existing textbooks and references; we just cite the results here. In the sequel, for simplicity of notation, we will use the uniform constant $\mathcal{C}$ to denote all the controllable constants.
\vskip2mm
\begin{lem} [Inverse inequality] \cite{chen16, chenW20a, Ciarlet1978} \label{ccclemC1}
	For each vector-valued grid function $\boldsymbol f_h\in X$, we have
	\begin{align}
		&
		\|{\boldsymbol f}_h \|_{\infty} \leq \gamma {h}^{-1/2 }
		( \|{\boldsymbol f}_h \|_2 + \| \nabla_h \boldsymbol f_h \|_2 ) ,   \label{inverse-1} 	
		\\
		& 	
		\| \boldsymbol f_h \|_q \leq \gamma {h}^{-(\frac32 - \frac{3}{q}) } \| \boldsymbol f_h \|_2 ,  \, \, \, \forall 2 < q \le + \infty ,
		\label{inverse-2}	
	\end{align}
	in which constant $\gamma$ depends on $\Omega$, as well as the form of the discrete $\| \cdot \|_2$ norm. 
\end{lem}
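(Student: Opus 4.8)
The plan is to treat the two estimates separately: I would derive \eqref{inverse-2} from elementary grid counting, and then bootstrap to \eqref{inverse-1} by combining an elementary $\ell^\infty$--$\ell^6$ inverse bound with a discrete Sobolev embedding that is uniform in $h$. Throughout I work on the three-dimensional grid ($d=3$), where the stated exponents are realized, and I allow the constant $\gamma$ to depend on $\Omega$ and on the precise form of the discrete norms.

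First I would establish \eqref{inverse-2}. The endpoint case $q=+\infty$ is immediate: since $\|\boldsymbol f_h\|_\infty^2=\max_{\mathcal I}|\boldsymbol f_{\mathcal I}|^2\le\sum_{\mathcal I}|\boldsymbol f_{\mathcal I}|^2=h^{-d}\|\boldsymbol f_h\|_2^2$, one obtains $\|\boldsymbol f_h\|_\infty\le h^{-d/2}\|\boldsymbol f_h\|_2$, which is exactly the $q=+\infty$ case with $d=3$. For finite $q>2$ I would interpolate: writing $|\boldsymbol f_{\mathcal I}|^q=|\boldsymbol f_{\mathcal I}|^{q-2}|\boldsymbol f_{\mathcal I}|^2\le\|\boldsymbol f_h\|_\infty^{q-2}|\boldsymbol f_{\mathcal I}|^2$ and summing gives $\|\boldsymbol f_h\|_q^q\le\|\boldsymbol f_h\|_\infty^{q-2}\|\boldsymbol f_h\|_2^2$. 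Inserting the endpoint bound and taking the $q$-th root yields $\|\boldsymbol f_h\|_q\le h^{-d(1/2-1/q)}\|\boldsymbol f_h\|_2$, i.e. the exponent $-(\tfrac32-\tfrac3q)$ when $d=3$.

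For \eqref{inverse-1} the key is to avoid the wasteful $h^{-3/2}$ factor coming from \eqref{inverse-2} at $q=+\infty$ by exploiting the gradient, and I would do this by routing the passage from $\ell^2$ to $\ell^\infty$ through the critical exponent $q=6$. The elementary estimate $\|\boldsymbol f_h\|_q\ge h^{d/q}\|\boldsymbol f_h\|_\infty$ (just $h^d\sum_{\mathcal I}|\boldsymbol f_{\mathcal I}|^q\ge h^d\max_{\mathcal I}|\boldsymbol f_{\mathcal I}|^q$) gives, for $d=3,\ q=6$, the bound $\|\boldsymbol f_h\|_\infty\le h^{-1/2}\|\boldsymbol f_h\|_6$. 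It then suffices to prove a discrete Sobolev embedding uniform in $h$, namely $\|\boldsymbol f_h\|_6\le\mathcal C(\|\boldsymbol f_h\|_2+\|\nabla_h\boldsymbol f_h\|_2)=\mathcal C\|\boldsymbol f_h\|_{H^1}$, mirroring the continuous embedding $H^1(\Omega)\hookrightarrow L^6(\Omega)$ in three dimensions. Chaining the two displays produces $\|\boldsymbol f_h\|_\infty\le\mathcal C h^{-1/2}\|\boldsymbol f_h\|_{H^1}$, which is \eqref{inverse-1}.

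To obtain the uniform embedding I would pass to a finite-element interpolant: associate to $\boldsymbol f_h$ its piecewise $Q_1$ (trilinear) interpolant $\Pi_h\boldsymbol f$ on the mesh cells, extended across the ghost layer consistently with the Neumann condition \eqref{BC-1}. On a uniform grid the discrete norms are equivalent, uniformly in $h$, to the Lebesgue/Sobolev norms of $\Pi_h\boldsymbol f$, that is $\|\Pi_h\boldsymbol f\|_{L^6(\Omega)}\approx\|\boldsymbol f_h\|_6$ and $\|\Pi_h\boldsymbol f\|_{H^1(\Omega)}\approx\|\boldsymbol f_h\|_{H^1}$, after which one simply applies the continuous Sobolev inequality to $\Pi_h\boldsymbol f$. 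The main obstacle is exactly this $h$-uniform norm equivalence together with the discrete embedding: one must keep the interpolation constants independent of $h$ and verify that the centered-difference gradient $\nabla_h\boldsymbol f_h$ dominates $\|\nabla\Pi_h\boldsymbol f\|_{L^2}$ up to an $h$-independent factor, which requires care near $\partial\Omega$ because of the ghost-point/Neumann extension. An entirely discrete alternative avoids interpolation by running a discrete Gagliardo--Nirenberg argument, bounding $\|\boldsymbol f_h\|_6$ via one-dimensional summation by parts (Lemma~\ref{summation}) in each coordinate direction combined with a discrete H\"older inequality; there the delicate point becomes the bookkeeping of the boundary contributions in the three directions. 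Either way, once the $h$-uniform $H^1\hookrightarrow L^6$ bound is secured, \eqref{inverse-1} follows at once from the elementary $\ell^\infty$--$\ell^6$ inequality above.
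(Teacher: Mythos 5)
The paper itself offers no proof of Lemma~\ref{ccclemC1}: it explicitly defers to the literature (``the proof of the standard inverse inequality can be obtained in existing textbooks and references'') and cites \cite{chen16, chenW20a, Ciarlet1978}, so your proposal is measured against a citation rather than a written argument. Judged on its own terms, the skeleton of your proposal is sound. The counting proof of \eqref{inverse-2} is complete and correct: the $q=+\infty$ case is the bound of a maximum by a sum, the finite-$q$ case follows from $\|\boldsymbol f_h\|_q^q\le (\max_{\mathcal I}|\boldsymbol f_{\mathcal I}|)^{q-2}\,\|\boldsymbol f_h\|_2^2$, and the mismatch between the paper's componentwise $\ell^\infty$ norm and the Euclidean $|\cdot|$ only costs factors of $\sqrt3$ absorbed into $\gamma$. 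The reduction of \eqref{inverse-1} to the elementary bound $\|\boldsymbol f_h\|_\infty\le h^{-1/2}\|\boldsymbol f_h\|_6$ combined with an $h$-uniform discrete embedding $\|\boldsymbol f_h\|_6\le\mathcal C(\|\boldsymbol f_h\|_2+\|\nabla_h\boldsymbol f_h\|_2)$ is indeed the standard mechanism that produces $h^{-1/2}$ instead of $h^{-3/2}$; that missing embedding is of exactly the same nature as the paper's Lemma~\ref{lem: Sobolev-1}, which is likewise quoted from \cite{guan17a, guan14a} without proof.

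The genuine gap is in the completion you describe in most detail, and it is not where you locate it. The paper's $\nabla_h$ is the \emph{centered} difference, and for centered differences the claimed $h$-uniform equivalence $\|\Pi_h\boldsymbol f\|_{H^1(\Omega)}\approx\|\boldsymbol f_h\|_{H^1}$ for the $Q_1$ interpolant is false in the interior, not merely delicate near the boundary: for the checkerboard function $f_{i,j,k}=(-1)^{i+j+k}$ every centered difference vanishes, so $\|\nabla_h f_h\|_2=0$, while the trilinear interpolant oscillates between $\pm 1$ across each cell, so $\|\nabla\Pi_h f\|_{L^2(\Omega)}\sim h^{-1}$. Hence $\|\nabla\Pi_h\boldsymbol f\|_{L^2}$ cannot be dominated by $\mathcal C(\|\boldsymbol f_h\|_2+\|\nabla_h\boldsymbol f_h\|_2)$, and the interpolant route collapses as stated. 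The repair is either your fully discrete alternative (a discrete Gagliardo--Nirenberg argument), or a parity decomposition: the centered differences of $\boldsymbol f_h$ are precisely the one-step differences, with spacing $2h$, of the restrictions of $\boldsymbol f_h$ to the eight sublattices indexed by the parities of $(i,j,k)$; applying the standard forward-difference discrete Sobolev embedding on each sublattice and summing yields the required $\ell^6$ bound (and, incidentally, explains why the zero-order term $\|\boldsymbol f_h\|_2$ is indispensable: the checkerboard has $\|f_h\|_6\sim|\Omega|^{1/6}$ with vanishing discrete gradient). With that replacement, your argument gives a correct, self-contained proof of the lemma.
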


The following discrete Sobolev inequality has been derived in the existing works~\cite{guan17a, guan14a}, for the discrete grid function with periodic boundary condition; an extension to the discrete homogeneous Neumann boundary condition can be made in a similar fashion.
\vskip2mm
\begin{lem} [Discrete Sobolev inequality] \cite{guan17a, guan14a} \label{lem: Sobolev-1}
	For a grid function $\boldsymbol f_h \in \boldsymbol X$,  we have the following discrete Sobolev inequality:
	\begin{align}
		\| \boldsymbol f_h \|_4 \le \mathcal{C} 	\| \boldsymbol f_h \|_2^\frac14 \cdot \| \boldsymbol f_h \|_{H_h^1}^\frac34
		\le  \mathcal{C}  ( \| \boldsymbol f_h \|_2  	
		+   	\| \boldsymbol f_h \|_2^\frac14 \cdot \| \nabla_h \boldsymbol f_h \|_2^\frac34 ) , \label{Sobolev-1}
	\end{align}
	in which the positive constant $\mathcal{C}$ only depends on the domain $\Omega$.
\end{lem}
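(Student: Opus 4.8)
The plan is to recognize the statement as a discrete Gagliardo--Nirenberg (Ladyzhenskaya-type) estimate in three dimensions and to assemble it from two ingredients: a discrete Sobolev embedding $\| \boldsymbol f_h \|_6 \le \mathcal{C} \| \boldsymbol f_h \|_{H_h^1}$ and the elementary interpolation of discrete $\ell^p$ norms. For the latter, the log-convexity of $p \mapsto \log \| \boldsymbol f_h \|_p$ (equivalently, H\"older's inequality applied to the defining sum) yields $\| \boldsymbol f_h \|_4 \le \| \boldsymbol f_h \|_2^{1/4} \| \boldsymbol f_h \|_6^{3/4}$, since $\tfrac14 = \tfrac14 \cdot \tfrac12 + \tfrac34 \cdot \tfrac16$. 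Combining the two ingredients immediately gives the first asserted bound $\| \boldsymbol f_h \|_4 \le \mathcal{C} \| \boldsymbol f_h \|_2^{1/4} \| \boldsymbol f_h \|_{H_h^1}^{3/4}$, and the second inequality then follows from the numerical estimate $(a^2+b^2)^{3/8} \le (a+b)^{3/4} \le a^{3/4}+b^{3/4}$ applied with $a = \| \boldsymbol f_h \|_2$ and $b = \| \nabla_h \boldsymbol f_h \|_2$.

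The core of the argument is therefore the discrete $\ell^6$ embedding, which I would establish by a slicing (dimension-reduction) argument, the discrete analogue of Ladyzhenskaya's proof, combined with a discrete Loomis--Whitney inequality. Fixing the transverse indices $(j,k)$ and applying the discrete fundamental theorem of calculus along the $x$-direction through the identity $|\boldsymbol f_{i+1,j,k}|^2 - |\boldsymbol f_{i,j,k}|^2 = (\boldsymbol f_{i+1,j,k}+\boldsymbol f_{i,j,k}) \cdot (\boldsymbol f_{i+1,j,k}-\boldsymbol f_{i,j,k})$, together with a discrete Cauchy--Schwarz inequality on the slice, bounds $\max_i |\boldsymbol f_{i,j,k}|^2$ by a quantity $F_1(j,k)$ controlled by the one-dimensional $\ell^2$ and gradient norms along that slice; analogous estimates produce $F_2(i,k)$ and $F_3(i,j)$ for the other two directions. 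Multiplying the three slice bounds gives $|\boldsymbol f_{i,j,k}|^6 \le F_1 F_2 F_3$, and summing over the grid while applying Cauchy--Schwarz direction-by-direction (the discrete Loomis--Whitney inequality) assembles this product into $\| \boldsymbol f_h \|_6^6 \le \mathcal{C} \| \boldsymbol f_h \|_{H_h^1}^6$. Summation by parts, Lemma~\ref{summation}, is used throughout to keep the gradient contributions in the symmetric $\| \nabla_h \cdot \|_2$ form.

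The main obstacle, and the reason the full $H_h^1$ norm rather than $\| \nabla_h \boldsymbol f_h \|_2$ alone appears, is the homogeneous Neumann boundary condition: unlike the periodic or Dirichlet settings, $\boldsymbol f_h$ need neither vanish nor have zero mean, so the naive telescoping bound $\max_i |\boldsymbol f_{i,j,k}|^2 \le \sum_i \bigl| \, |\boldsymbol f_{i+1,j,k}|^2 - |\boldsymbol f_{i,j,k}|^2 \, \bigr|$ is false (it fails already for constant grid functions). The remedy is to anchor the telescoping at the slice minimum and bound that minimum by the slice average, $\min_i |\boldsymbol f_{i,j,k}|^2 \le \mathcal{C} \, h \sum_i |\boldsymbol f_{i,j,k}|^2$, which is precisely what injects the $\| \boldsymbol f_h \|_2$ contribution into each $F_\ell$. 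A secondary technical point is that the paper's $\nabla_h$ is a centered difference whereas the telescoping naturally involves a forward difference; these are equivalent up to a fixed constant, so the estimate is unaffected. Alternatively, following the cited works \cite{guan17a, guan14a}, one may prove the periodic case via the discrete Fourier transform and Parseval's identity and then pass to the Neumann case by even reflection across $\partial\Omega$, which preserves the discrete $H_h^1$ norm up to a constant; I expect the slicing route to be the more transparent and self-contained of the two.
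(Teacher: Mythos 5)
First, a point of comparison: the paper does not actually prove Lemma~\ref{lem: Sobolev-1} at all. It is a quoted result, established in \cite{guan17a, guan14a} for periodic boundary conditions via the discrete Fourier transform and Parseval's identity, with the Neumann extension only asserted ``in a similar fashion.'' Your closing paragraph correctly identifies that route, so the question is whether your self-contained slicing proof closes. Your reduction steps are sound: the exponents in $\|\boldsymbol f_h\|_4\le\|\boldsymbol f_h\|_2^{1/4}\|\boldsymbol f_h\|_6^{3/4}$ are correct, the elementary inequality $(a^2+b^2)^{3/8}\le a^{3/4}+b^{3/4}$ does give the second bound in \eqref{Sobolev-1}, and anchoring the telescoping at the slice minimum (bounded by the slice average) is exactly the right way to handle the Neumann boundary condition; it is what produces the inhomogeneous $\|\boldsymbol f_h\|_2$ contribution.

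The gap is in the three-dimensional assembly of the $\ell^6$ bound. Write $A(j,k)=h\sum_i|\boldsymbol f_{i,j,k}|^2$ and $B(j,k)=h\sum_i|\nabla_h\boldsymbol f_{i,j,k}|^2$, so that your slice bound reads $\max_i|\boldsymbol f_{i,j,k}|^2\le F_1(j,k)$ with $F_1\le C(A+\sqrt{AB}\,)$. The discrete Loomis--Whitney inequality applied to $\sum_{i,j,k}F_1(j,k)F_2(i,k)F_3(i,j)$ produces the \emph{plane $\ell^2$ norms} $\bigl(h^2\sum_{j,k}F_1^2\bigr)^{1/2}$, hence forces you to bound $h^2\sum_{j,k}A^2$ and $h^2\sum_{j,k}AB$ by $\|\boldsymbol f_h\|_{H_h^1}^4$. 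Neither follows from Cauchy--Schwarz: the continuum analogue of $\max_{j,k}A$ is $\sup_{y,z}\int f^2\,dx$, which is \emph{not} controlled by $\|f\|_{H^1}^2$ (take $f=\phi(x)\psi(y,z)$ with $\|\psi\|_{H^1(\mathbb{R}^2)}$ bounded but $\psi(0)$ huge, exploiting the failure of $H^1(\mathbb{R}^2)\hookrightarrow L^\infty$); bounding $\int(\int f^2dx)^2dy\,dz$ needs an extra $2$-D $W^{1,1}\hookrightarrow L^2$ embedding applied to the slice integral; and the cross term $\int(\int f^2dx)(\int|\partial_x f|^2dx)\,dy\,dz$ involves transverse regularity of $\int|\partial_x f|^2dx$ that first derivatives cannot provide. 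If instead you apply Loomis--Whitney in the form that yields plane $\ell^1$ norms of the $F_\ell$ (which \emph{are} controlled, $h^2\sum_{j,k}F_1\le C\|\boldsymbol f_h\|_{H_h^1}^2$), you only bound $\sum_{i,j,k}|\boldsymbol f|^3$, i.e.\ the subcritical $\ell^3$ embedding: the critical exponent $6$ is structurally out of reach of the squares-telescoping, which is intrinsically the $2$-D Ladyzhenskaya trick. Two standard repairs: (i) telescope the first power $|\boldsymbol f_h|$ with $\ell^1$ slice norms (the genuine Gagliardo--Nirenberg structure), obtaining $\|\boldsymbol f_h\|_{3/2}\le C(\|\boldsymbol f_h\|_1+\|\nabla_h\boldsymbol f_h\|_1)$, then apply this to $|\boldsymbol f_h|^4$ with the discrete product rule and H\"older to reach $\ell^6$; or, more directly for \eqref{Sobolev-1}, (ii) avoid $\ell^6$ entirely via the Ladyzhenskaya iteration: the $2$-D discrete $\ell^4$ inequality on each horizontal plane, summed over the vertical index, combined with your own anchor-plus-telescoping bound $\max_k h^2\sum_{i,j}|\boldsymbol f_{i,j,k}|^2\le \|\boldsymbol f_h\|_2^2+2\|\boldsymbol f_h\|_2\|\nabla_h\boldsymbol f_h\|_2$, which gives $\|\boldsymbol f_h\|_4^4\le C\|\boldsymbol f_h\|_2\|\boldsymbol f_h\|_{H_h^1}^3$, i.e.\ exactly the claim.
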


In the numerical simulation, we set $\epsilon=1$ and $\boldsymbol{f}=0$ in \eqref{eq-4} for convenience. The 1-D exact solution is chosen to be
$$
\boldsymbol{m}_{e}=(\cos (X) \sin t, \sin (X) \sin t, \cos t)^{T},
$$
and the 3-D exact solution is set as
$$
\boldsymbol{m}_{e}=(\cos (X Y Z) \sin t, \sin (X Y Z) \sin t, \cos t)^{T},
$$
where $X=x^{2}(1-x)^{2}, Y=y^{2}(1-y)^{2}, Z=z^{2}(1-z)^{2}$. It is easy to check that the homogeneous Neumann boundary condition \eqref{eq-2} is satisfied. A forcing term $\boldsymbol{f}_{e}=\partial_{t} \boldsymbol{m}_{e}-\alpha \Delta \boldsymbol{m}_{e}-\alpha\left|\nabla \boldsymbol{m}_{e}\right|^{2}+\boldsymbol{m}_{e} \times$ $\Delta \boldsymbol{m}_{e}$ is included into the nonlinear part $N(t,\boldsymbol{m})$.

\subsection{Accuracy and efficiency test of IMEX-RK2}
In the 1-D computation, we fix $h=1/5000$ and record the error in terms of $k$ in Table~\ref{table2}, and fix $k=1e-3/10000$ and record the error in terms of $h$ in Table~\ref{table3}. The second-order accuracy of IMEX-RK2 is observed in both space and time.
\begin{table}[htbp]
	\centering
	\caption{Temporal accuracy of IMEX-RK2 in the 1-D computation ($\tau=1e-4, h=1/5000,\alpha=0.01$, and $\beta=5$).}\label{table2}
	\begin{tabular}{cccc}
		\hline$k$ & $\left\|\boldsymbol m_{h}-\boldsymbol m_{e}\right\|_{\infty}$ & $\left\|\boldsymbol m_{h}-\boldsymbol m_{e}\right\|_{2}$ & $\left\|\boldsymbol m_{h}-\boldsymbol m_{e}\right\|_{H^{1}}$ \\
		\hline
		$\tau / 5$ & $1.7011e-10$ & $2.7861e-11$ & $1.7582e-09$ \\
		$\tau / 10$ & $4.4130e-11$ & $8.1126e-12$ & $6.3812e-10$ \\
		$\tau / 15$ & $2.1077e-11$ & $3.8709e-12$ & $3.3541e-10$ \\
		$\tau / 20$ & $1.2028e-11$ & $2.2727e-12$ & $2.1471e-10$ \\
		$\tau / 25$ & $8.1095e-12$ & $1.4985e-12$ & $1.6165e-10$ \\
		order & $1.8930$ & $1.8152$ & $1.5001$ \\
		\hline
	\end{tabular}
\end{table}
\begin{table}[htbp]
	\centering
	\caption{Spatial accuracy of IMEX-RK2 in the 1-D computation ($k=1e-7,\alpha=0.01$, and $\beta=5$).}\label{table3}
	\begin{tabular}{cccc}
		\hline$h$ & $\left\|\boldsymbol m_{h}-\boldsymbol m_{e}\right\|_{\infty}$ & $\left\|\boldsymbol m_{h}-\boldsymbol m_{e}\right\|_{2}$ & $\left\|\boldsymbol m_{h}-\boldsymbol m_{e}\right\|_{H^{1}}$ \\
		\hline
		$1 / 50 $ & $2.7361e-09$ & $8.3040e-10$ & $4.5150e-07$ \\
		$1 / 100$ & $7.3177e-10$ & $2.1625e-10$ & $1.1307e-07$ \\
		$1 / 150$ & $3.2921e-10$ & $9.6819e-11$ & $5.0272e-08$ \\
		$1 / 200$ & $1.8597e-10$ & $5.4601e-11$ & $2.8281e-08$ \\
		$1 / 250$ & $1.1926e-10$ & $3.4986e-11$ & $1.8101e-08$ \\
		order & $1.9472$ & $1.9681$ & $1.9986$ \\
		\hline
	\end{tabular}
\end{table}
In the 3-D computation, we fix $h=1/16$ and record the error in terms of $k$ in Table~\ref{table4}, and fix $k=1/10000$ and record the error in terms of $h$ in Table~\ref{table5}. Again, the second-order accuracy of IMEX-RK2 is observed in both space and time.
\begin{table}[htbp]
	\centering
	\caption{Temporal accuracy of IMEX-RK2 in the 3-D computation ($h=1/16,\alpha=0.01$, and $\beta=5$). }\label{table4}
	\begin{tabular}{cccc}
		\hline$k$ & $\left\|\boldsymbol m_{h}-\boldsymbol m_{e}\right\|_{\infty}$ & $\left\|\boldsymbol m_{h}-\boldsymbol m_{e}\right\|_{2}$ & $\left\|\boldsymbol m_{h}-\boldsymbol m_{e}\right\|_{H^{1}}$ \\
		\hline
		$1/4$ & $0.0022$               & $0.0025$ & $0.0025$ \\
		$1/6 $ & $0.0010$ & $0.0011$ & $0.0011$ \\
		$1/8$ & $5.5504e-04$ & $6.3857e-04$ & $6.6930e-04$ \\
		$1/10$ & $3.6163e-04$ & $4.1354e-04$ & $4.6041e-04$ \\
		order & $1.9773$ & $1.9600$ & $1.8594$ \\
		\hline
	\end{tabular}
\end{table}
\begin{table}[htbp]
	\centering
	\caption{Spatial accuracy of IMEX-RK2 in the 3-D computation ($k=1/10000,\alpha=0.01$, and $\beta=5$). }\label{table5}
	\begin{tabular}{cccc}
		\hline$h$ & $\left\|\boldsymbol m_{h}-\boldsymbol m_{e}\right\|_{\infty}$ & $\left\|\boldsymbol m_{h}-\boldsymbol m_{e}\right\|_{2}$ & $\left\|\boldsymbol m_{h}-\boldsymbol m_{e}\right\|_{H^{1}}$ \\
		\hline
		$1 / 3 $ & $1.4756e-04$ & $1.7447e-04$ & $2.5609e-04$ \\
		$1 / 5 $ & $5.2131e-05$ & $6.1669e-05$ & $9.3673e-05$ \\
		$1 / 7 $ & $2.6372e-05$ & $3.1402e-05$ & $4.7958e-05$ \\
		$1 / 9 $ & $1.5935e-05$ & $1.8985e-05$ & $2.9167e-05$ \\
		$1 / 11$ & $1.0670e-05$ & $1.2707e-05$ & $1.9579e-05$ \\
		order & $2.0223 $ & $2.0155$ & $1.9794$ \\
		\hline
	\end{tabular}
\end{table}

In terms of the efficiency comparison, we plot the CPU time (in seconds) vs. the error $\left\|\boldsymbol m_{h}-\boldsymbol m_{e}\right\|_{\infty}$.
Results of BDF2 and IMEX-RK2 are visualized in Fig.~\ref{time_1D} and Fig.~\ref{space_1D} for the 1-D case, and in Fig.~\ref{time_3D} and  Fig.~\ref{space_3D} for the 3-D case.
\begin{figure}[htbp]
	\flushright	
	\subfloat[$k$]{\label{time_1D}\includegraphics[width=2.5in]{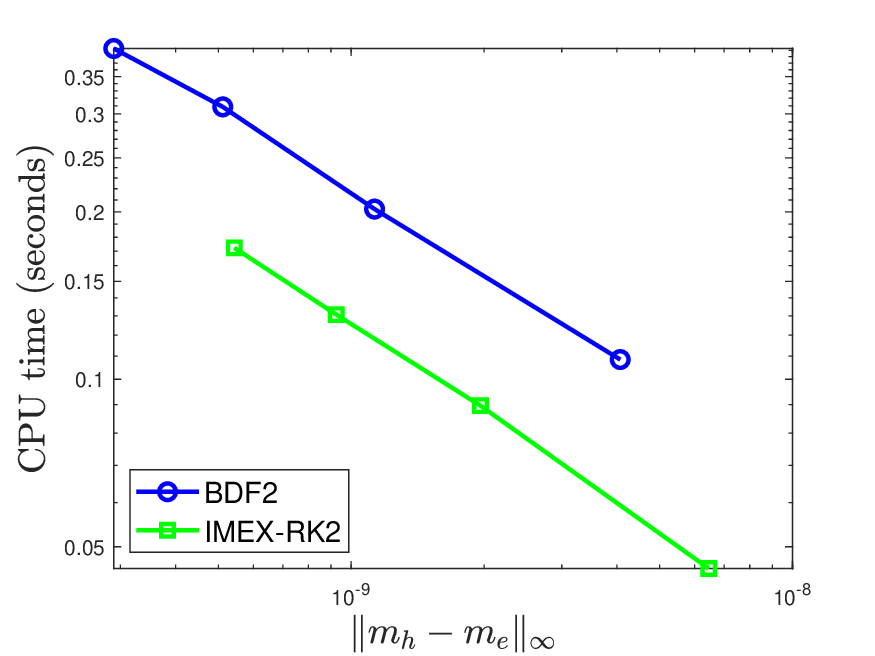}}
	\subfloat[$h$]{\label{space_1D}\includegraphics[width=2.5in]{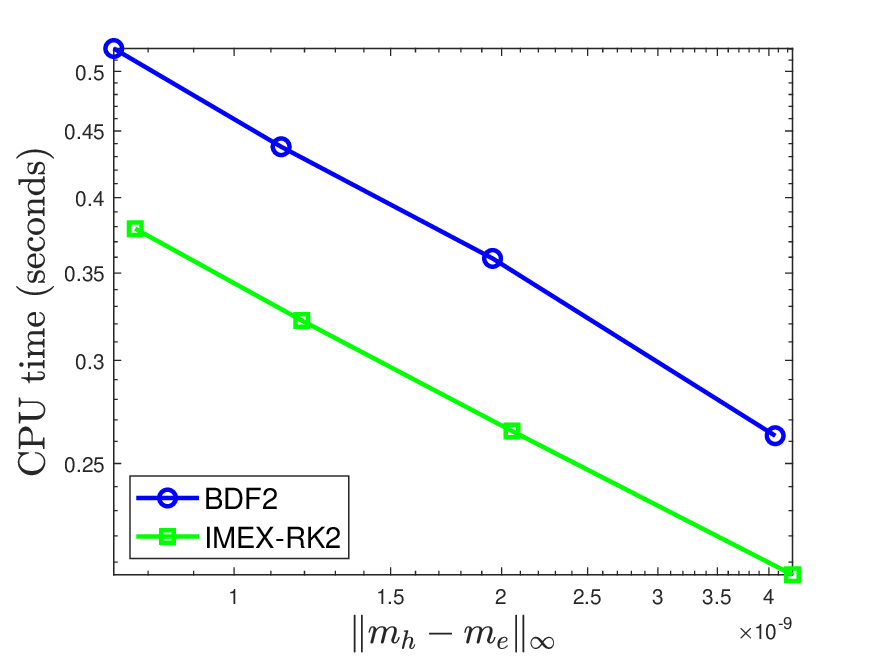}}
	\quad	
	\subfloat[$k$]{\label{time_3D}\includegraphics[width=2.5in]{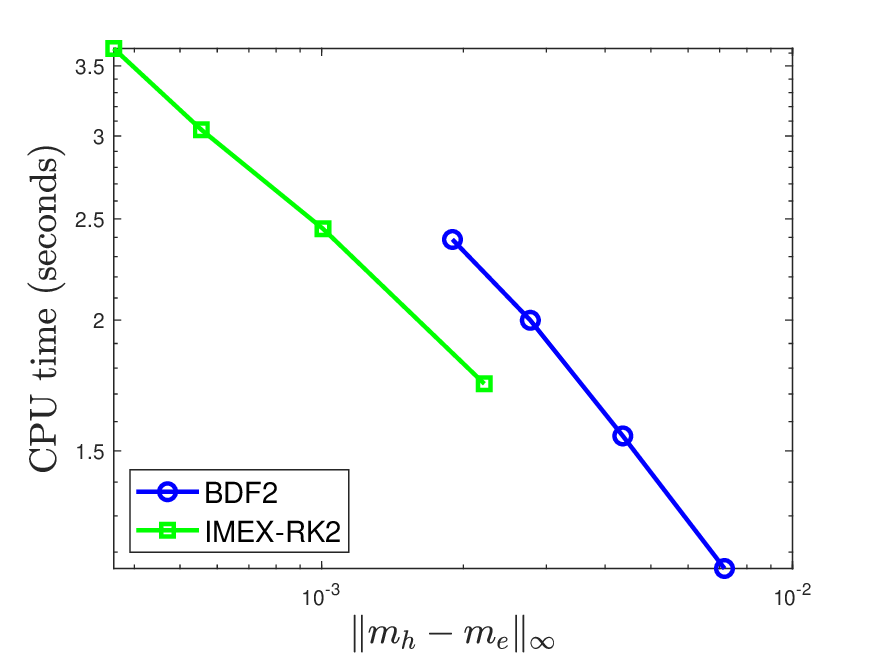}}
	\subfloat[$h$]{\label{space_3D}\includegraphics[width=2.5in]{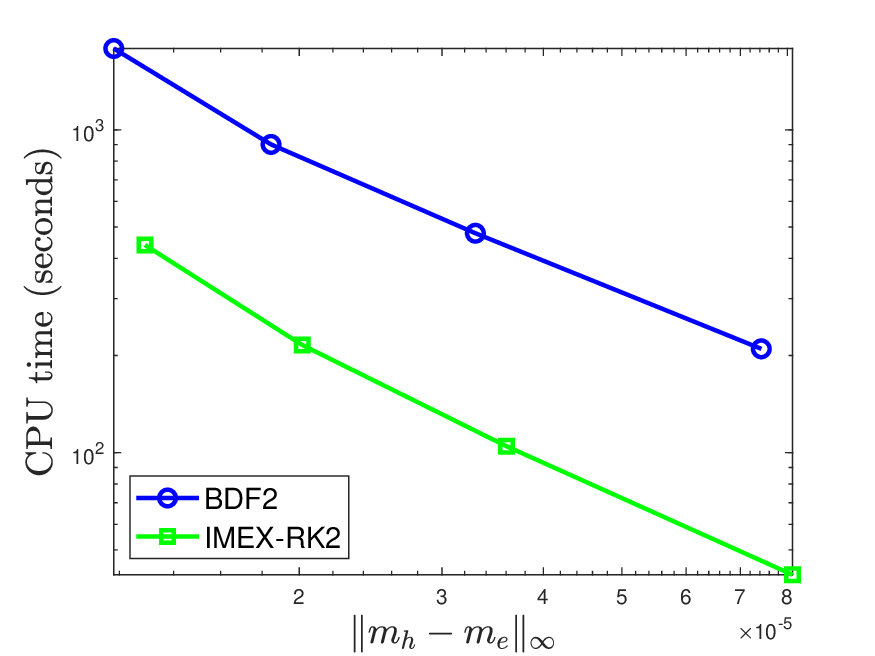}}
	\caption{ IMEX-RK2 vs. BDF2: CPU time (in seconds) as a function of error by varying $k$ and $h$, respectively. Top row: 1-D; Bottom row: 3-D.}
	\label{rk2 cpu time}
\end{figure}
By Fig.~\ref{rk2 cpu time}, IMEX-RK2 is superior to BDF2 in both the 1-D and 3-D computations.

\subsection{Accuracy and efficiency test of IMEX-RK3}
Using the same spatial resolution, we only test the temporal accuracy of IMEX-RK3 here. In the 1-D computation, we fix $k =0.01\times {{h}^{\frac{2}{3}}}$ and record the error in terms of $k$ in Table~\ref{table6}.
\begin{table}[htbp]
	\centering
	\caption{Temporal accuracy of IMEX-RK3 in the 1-D computation ($k =0.01\times {{h}^{\frac{2}{3}}}$, $\alpha=0.01$, and $\beta=5$). }\label{table6}
	\begin{tabular}{cccc}
		\hline$k$ & $\left\|\boldsymbol m_{h}-\boldsymbol m_{e}\right\|_{\infty}$ & $\left\|\boldsymbol m_{h}-\boldsymbol m_{e}\right\|_{2}$ & $\left\|\boldsymbol m_{h}-\boldsymbol m_{e}\right\|_{H^{1}}$ \\
		\hline
		$1 / 208$ & $0.0435$ & $0.0508$ & $0.0921$ \\
		$1 / 252$ & $0.0244$ & $0.0287$ & $0.0527$ \\
		$1 / 292$ & $0.0155$ & $0.0184$ & $0.0341$ \\
		$1 / 330$ & $0.0109$ & $0.0128$ & $0.0236$ \\
		order & $3.0235$ & $2.9930$ & $2.8900$ \\
		\hline
	\end{tabular}
\end{table}
In the 3-D computation, we fix $k =0.001\times {{h}^{\frac{2}{3}}}$ and record the error in terms of $k$ in Table
~\ref{table8}. The third-order temporal accuracy is observed for IMEX-RK3 in time, in both the 1-D and 3-D compuations.
\begin{table}[htbp]
	\centering
	\caption{Temporal accuracy of IMEX-RK3 in the 3-D compuation ($k =0.001\times {{h}^{\frac{2}{3}}}$,$\alpha=0.01$, and $\beta=5$).}\label{table8}
	\begin{tabular}{cccc}
		\hline$k$ & $\left\|\boldsymbol m_{h}-\boldsymbol m_{e}\right\|_{\infty}$ & $\left\|\boldsymbol m_{h}-\boldsymbol m_{e}\right\|_{2}$ & $\left\|\boldsymbol m_{h}-\boldsymbol m_{e}\right\|_{H^{1}}$ \\
		\hline
		$1/2080$ & $1.4755e-04$ & $1.7446e-04$ & $2.5608e-04$ \\
		$1/2520$ & $8.1182e-05$ & $9.6730e-05$ & $1.4581e-04$ \\
		$1/2924$ & $5.2128e-05$ & $6.1666e-05$ & $9.3671e-05$ \\
		$1/3302$ & $3.6028e-05$ & $4.2767e-05$ & $6.5140e-05$ \\
		order & $3.0460$ & $3.0422$ & $2.9621$ \\
		\hline
	\end{tabular}
\end{table}

Since IMEX-RK2 and IMEX-RK3 only differ in the temporal discretization, we further plot the CPU time (in seconds) of IMEX-RK2 and IMEX-RK3 in terms of the temporal error in the 1-D and 3-D cases; see Fig.~\ref{rk3 cpu time}. These results indicate that IMEX-RK3 is more efficient than IMEX-RK2, and thus is more efficient than BDF2.
\begin{figure}[htbp]
	\subfloat[1D]{\label{1d time}\includegraphics[width=2.5in]{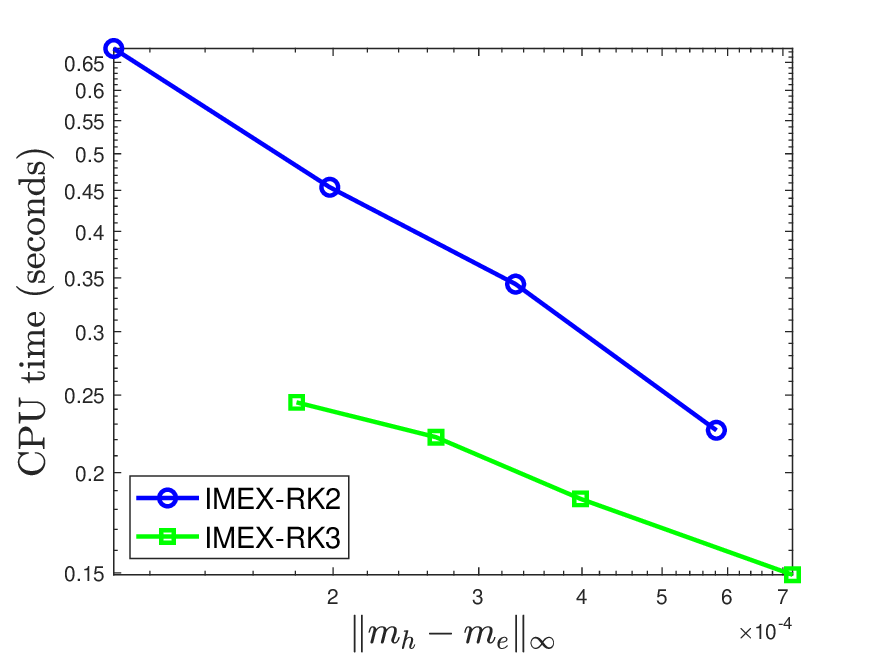}}
	\subfloat[3D]{\label{3d time}\includegraphics[width=2.5in]{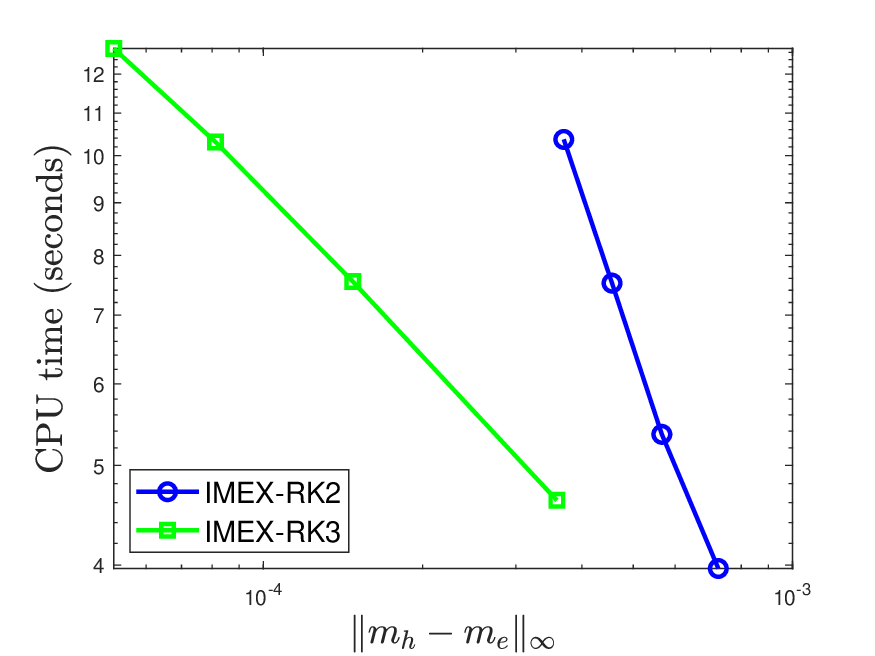}}
	\caption{ IMEX-RK2 vs. IMEX-RK3: CPU time (in seconds) as a function of temporal error by varying $k$.}
	\label{rk3 cpu time}
\end{figure}

\subsection{Accuracy test of SSP-IMEX-RK2}
In the 1-D test, we keep $h=5\times {{10}^{1}}k$ and record the error in terms of $k$ in Table
~\ref{table9}. In the 3-D computation, we fix $h=1\times {{10}^{3}}k$ and record the error in terms of $k$ in Table~\ref{table10}. Second-order accuracy of SSP-IMEX-RK2 has been observed in both the 1-D and 3-D cases. The accuracy curves are displayed in Fig.~\ref{ssp imex rk2}.

\begin{table}[htbp]
	\centering
	\caption{Temporal accuracy of SSP-IMEX-RK2 in the 1-D computation ($\tau=2e-2, \alpha=0.01$, and $\beta=5$).}\label{table9}
	\begin{tabular}{cccc}
		\hline$k$ & $\left\|\boldsymbol m_{h}-\boldsymbol m_{e}\right\|_{\infty}$ & $\left\|\boldsymbol m_{h}-\boldsymbol m_{e}\right\|_{2}$ & $\left\|\boldsymbol m_{h}-\boldsymbol m_{e}\right\|_{H^{1}}$ \\
		\hline
		$\tau / 3 $ & $5.4161e-05$ & $1.1077e-05$ & $0.0019$ \\
		$\tau / 4 $ & $3.1570e-05$ & $5.9625e-06$ & $0.0012$ \\
		$\tau / 5 $ & $2.0783e-05$ & $3.6472e-06$ & $7.7420e-04$ \\
		$\tau / 6 $ & $1.4901e-05$ & $2.4208e-06$ & $5.4551e-04$ \\
		order & $1.8640$ & $2.1927$ & $1.8076$ \\
		\hline
	\end{tabular}
\end{table}

\begin{table}[htbp]
	\centering
	\caption{Temporal accuracy of SSP-IMEX-RK2 in the 3-D computation ($\tau=1e-3,\alpha=0.01$, and $\beta=5$).}\label{table10}
	\begin{tabular}{cccc}
		\hline$h$ & $\left\|\boldsymbol m_{h}-\boldsymbol m_{e}\right\|_{\infty}$ & $\left\|\boldsymbol m_{h}-\boldsymbol m_{e}\right\|_{2}$ & $\left\|\boldsymbol m_{h}-\boldsymbol m_{e}\right\|_{H^{1}}$ \\
		\hline
		$\tau / 8 $ & $1.7697e-10$ & $6.6171e-11$ & $4.5228e-08$ \\
		$\tau / 10$ & $1.1599e-10$ & $4.5129e-11$ & $2.9500e-08$ \\
		$\tau / 12$ & $8.1594e-11$ & $3.3029e-11$ & $2.0741e-08$ \\
		$\tau / 14$ & $6.0414e-11$ & $2.5397e-11$ & $1.5370e-08$ \\
		order & $1.9203$ & $1.7115$ & $1.9284$ \\
		\hline
	\end{tabular}
\end{table}
\begin{figure}[htbp]
	\subfloat[1D]{\includegraphics[width=2.5in]{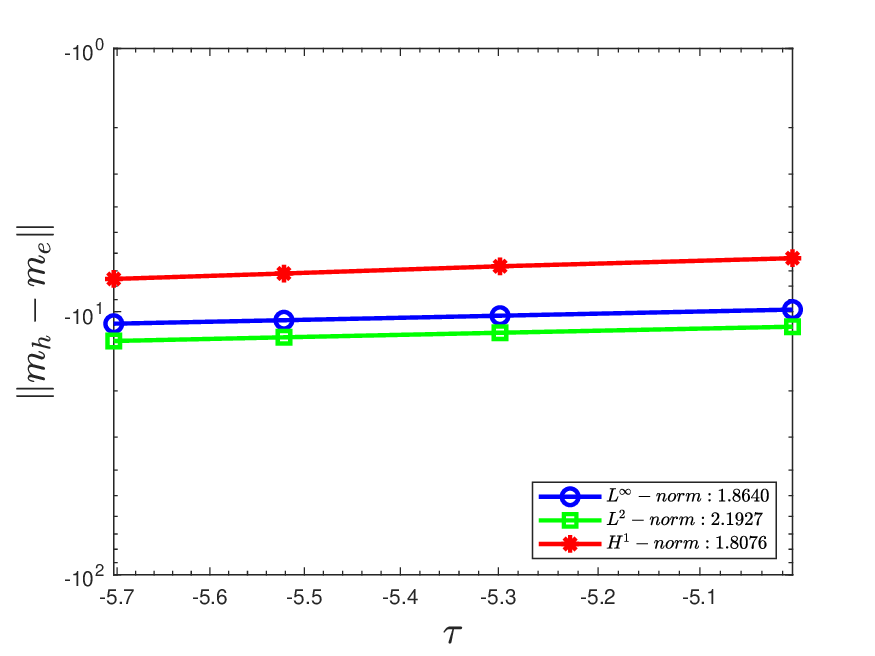}}
	\subfloat[3D]{\includegraphics[width=2.5in]{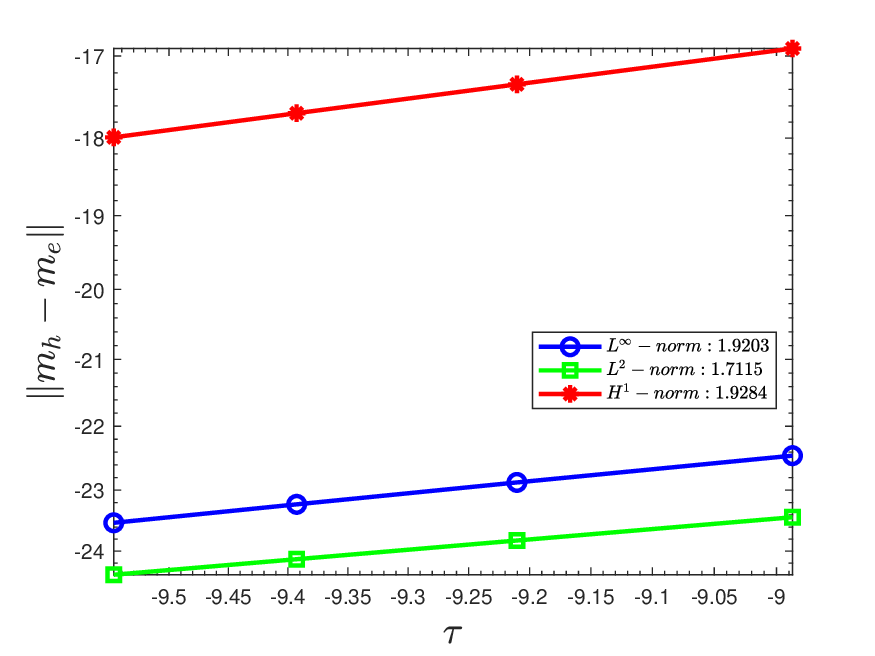}}
	\caption{Temporal accuracy in the 1-D and 3-D computations of the SSP-IMEX-RK2 scheme.}
	\label{ssp imex rk2}
\end{figure}

\subsection{Dependence on the damping parameter}
There are a few numerical methods that only several linear systems with constant coefficients and SPD structure need to be solved at each time step, including the first-order-in-time GSPM \cite{wang2001gauss,Li2020TwoIG}, the second-order-in-time method \cite{cai2022second}, and the current method. Numerically, the method in \cite{cai2022second} only works when $\alpha>1$, most magnetic materials correspond to $\alpha\ll 1$. If $\alpha>1$, we can set $\beta=\alpha$ and then apply the idea of IMEX-RK. Therefore, the current method works for a general damping parameter.

Next, we examine the performance of IMEX-RK on the choice of artificial damping parameter $\beta$. The 3-D results are recorded in Table~\ref{table12} and  Table~\ref{table13}. On the basis of these results, it is clear that IMEX-RK methods work for general artificial damping parameters. The 1-D results are similar and are not listed here. Therefore, we can set $\beta>1$ if $\alpha\ll 1$ and $\beta=\alpha$ if $\alpha\ge1$ numerically.

\begin{table}[htbp]
	\centering
	\small
	\renewcommand{\arraystretch}{1.2}
	\setlength{\tabcolsep}{0.7mm}{
		\caption{3-D errors of IMEX-RK2, with $h=500k$.}\label{table12}
		\begin{tabular}{|c|c|ccc|ccc|}
			\hline 
			\multirow{2}{*}{$\beta$} & \multirow{2}{*}{$k$} & \multicolumn{3}{c|}{$\alpha=0.001$} & \multicolumn{3}{c|}{$\alpha=0.01$} \\
			\cline { 3 - 8 } & & $L^{\infty}$ & $L^{2}$ & $H^{1}$ & $L^{\infty}$  & $L^{2}$ & $H^{1}$ \\
			
			\hline                & $1/1000$    & $3.59e-04$ & $4.26e-04$ & $5.44e-04$   & $3.59e-04$ & $4.26e-04 $& $5.44e-04$\\
			
			$1$  & $1/2000$    & $8.13e-05$ & $9.67e-05$ & $1.46e-04$   & $8.12e-05$ &$ 9.67e-05$ & $1.46e-04$\\
			
			& $1/4000$   & $2.02e-05$ &$ 2.40e-05$ & $3.68e-05$   &$ 2.02e-05 $& $2.40e-05$ & $3.68e-05$ \\\hline
			\multicolumn{2}{|c|}{\text { order }}
			& $2.0758 $              & $2.0749$&$1.9429$ &$ 2.0758 $&$ 2.0749$& $1.9429$ \\
			
			\hline                & $1/1000$    & $3.59e-04$ & $4.26e-04$ & $5.44e-04$   & $3.59e-04$ & $4.26e-04$ & $5.44e-04$\\
			
			$3$  & $1/2000$    & $8.13e-05$ & $9.67e-05$ &$ 1.46e-04 $  & $8.12e-05$ &$ 9.67e-05$ & $1.46e-04$\\

			&$ 1/4000$   & $2.02e-05$ & $2.40e-05$ & $3.68e-05 $  &$ 2.02e-05$ & $2.40e-05 $& $3.68e-05$ \\\hline
			\multicolumn{2}{|c|}{\text { order }}
			& $2.0758 $              & $2.0749$&$1.9429$ &$ 2.0758 $&$ 2.0749$& $1.9429$\\
			
			\hline                & $1/1000$    & $3.59e-04$ & $4.26e-04$ & $5.44e-04$   & $3.59e-04 $& $4.26e-04$ & $5.44e-04$\\
			
			$4$  & $1/2000$    &$ 8.13e-05$ & $9.67e-05$ & $1.46e-04$   & $8.12e-05$ & $9.67e-05$ &$ 1.46e-04$\\
			
			& $1/4000$   &$ 2.03e-05$ &$ 2.40e-05$ & $3.68e-05$   & $2.02e-05$ & $2.40e-05$ &$ 3.68e-05$ \\\hline
			\multicolumn{2}{|c|}{\text { order }}
			& $2.0722 $              & $2.0749$&$1.9429$ &$ 2.0758 $&$ 2.0749$& $1.9429$\\
			\hline
	\end{tabular}}
\end{table}

\begin{table}[htbp]
	\centering
	\small
	\renewcommand{\arraystretch}{1.2}
	\setlength{\tabcolsep}{0.7mm}{
		\caption{3-D error of IMEX-RK3, with $k =0.001 {{h}^{\frac{2}{3}}}$.}\label{table13}
		\begin{tabular}{|c|c|ccc|ccc|}
			\hline \multirow{2}{*}{$\beta$} & \multirow{2}{*}{$k$} & \multicolumn{3}{c|}{$\alpha=0.001$} & \multicolumn{3}{c|}{$\alpha=0.01$} \\
			\cline { 3 - 8 } & & $L^{\infty}$ & $L^{2}$ & $H^{1}$ & $L^{\infty}$ & $L^{2}$ & $H^{1}$ \\
			
			\hline                & $1/2080$ & $1.48e-04$ & $1.74e-04$ & $2.56e-04$& $1.48e-04$ & $1.74e-04 $&$ 2.56e-04$\\
			$1$ & $1/2520$ &$ 8.12e-05$ &$ 9.67e-05$ & $1.46e-04$& $8.12e-05$ & $9.67e-05 $& $1.46e-04$\\
			&$ 1/3302$ & $3.61e-05 $&$ 4.28e-05 $&$ 6.51e-05$&  $3.60e-05$ &$ 4.28e-05 $& $6.51e-05$\\\hline
			\multicolumn{2}{|c|}{\text { order }}
			& $3.0494$& $3.0335$ &$ 2.9644 $ & $3.0557$  & $ 3.0335$&$ 2.9644$ \\
			
			\hline                & $1/2080$ & $1.48e-04$ & $1.74e-04$ & $2.56e-04$& $1.48e-04$ & $1.74e-04 $& $2.56e-04$\\
			$3$ & $1/2520$ &$ 8.12e-05$ & $9.67e-05$ &$ 1.46e-04$& $8.12e-05$ & $9.67e-05$ & $1.46e-04$\\
			&$ 1/3302$ & $3.61e-05 $&$ 4.28e-05$ & $6.51e-05$&  $3.60e-05 $& $4.28e-05$ & $6.51e-05$\\\hline
			\multicolumn{2}{|c|}{\text { order }}
			& $3.0494$& $3.0335$ &$ 2.9644 $ & $3.0557$  & $ 3.0335$&$ 2.9644$  \\
			
			\hline                & $1/2080$ & $1.48e-04$ & $1.74e-04$ & $2.56e-04$& $1.48e-04 $&$ 1.74e-04 $& $2.56e-04$\\
			$4$ & $1/2520 $& $8.12e-05$ & $9.67e-05 $& $1.46e-04$& $8.12e-05$ & $9.67e-05$ & $1.46e-04$\\
			&$ 1/3302$ & $3.61e-05$ &$ 4.28e-05$ &$ 6.51e-05$&  $3.60e-05 $&$ 4.28e-05$ & $6.51e-05$\\\hline
			\multicolumn{2}{|c|}{\text { order }}
			& $3.0494$& $3.0335$ &$ 2.9644 $ & $3.0557$  & $ 3.0335$&$ 2.9644$  \\
			\hline
	\end{tabular}}
\end{table}
\vskip2mm
\begin{remark}
	We have demonstrated the high-order-in-time accuracy of IMEX-RK, including second-order and third-order ones. It is worth noting that even higher-order-in-time IMEX-RK methods for the LL equation can be designed; see the $s$-level method in \cite{gear1965hybrid} for a general purpose. This method does not necessarily have the $s$-order accuracy when $s>4$ in \cite{gear1965hybrid}. For instance, the $s$-level method has at most $(s-1)$-order accuracy when $s=5,6,7$.
\end{remark}

\section{Micromagnetics simulations}\label{section:micromagnetics simulations}

In this section, we apply IMEX-RK2 and IMEX-RK3 to conduct micromagnetics simulations, including different equilibrium structures and a benchmark problem from NIST \cite{NIST2000website} to examine the performance of our proposed methods in the real-world applications.

\subsection{Equilibrium states}
We use a spatial resolution $64\times 128\times 1$ on a $1\times 2\times 0.02$ $\mu {{m}^{3}}$ thin-film element with $\alpha =0.1$, a temporal step size $k=1$ picosecond (ps), and set $\beta =3$ in IMEX-RK methods. In the absence of an external field, multiple metastable states are often observed in ferromagnets, both experimentally and numerically \cite{zheng1997switching,schrefl2003finite}.

Starting with three different initial magnetization distributions, we obtain three equilibrium states in Fig.~\ref{metastable states}, including Landau state, C-state, and S-state. The arrow denotes the first two components of the magnetization vector and the color denotes the angle between them. It is clearly that IMEX-RK2,3 produce consistent results.
\begin{figure}[htbp]
	\flushright
	\subfloat{\label{fig:a}\includegraphics[width=1.7in]{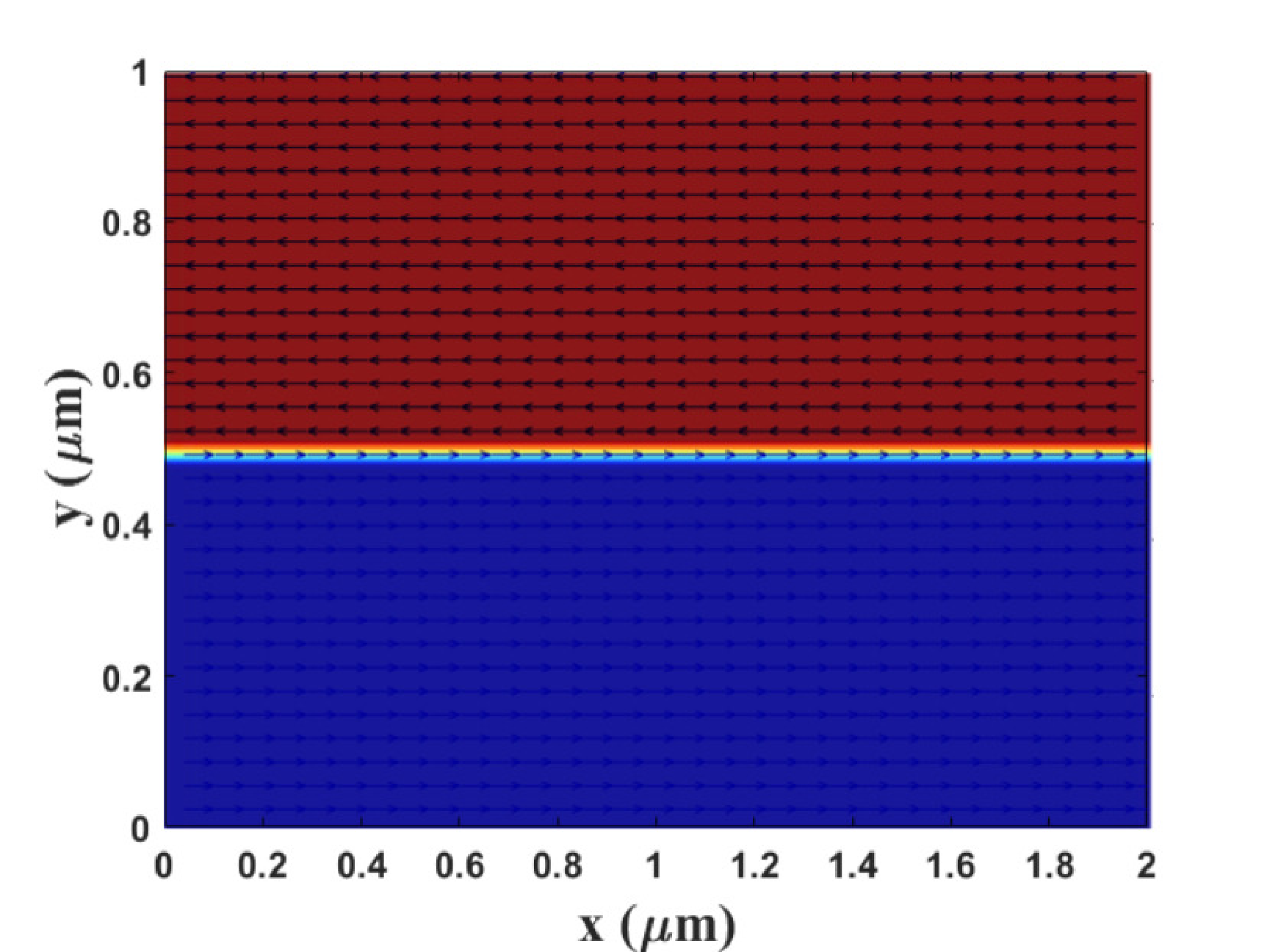}}
	\subfloat{\label{fig:b}\includegraphics[width=1.7in]{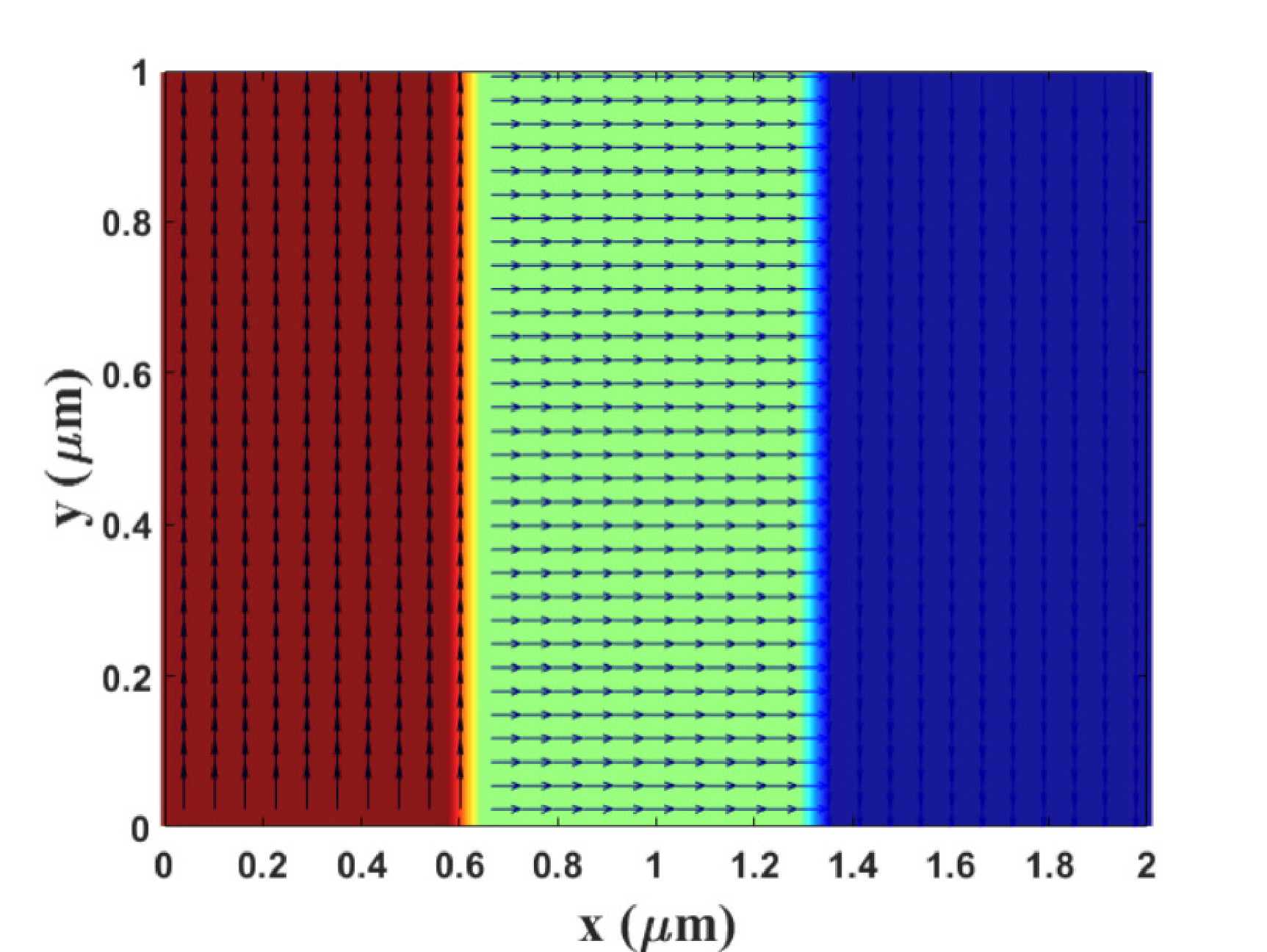}}
	\subfloat{\label{fig:c}\includegraphics[width=1.7in]{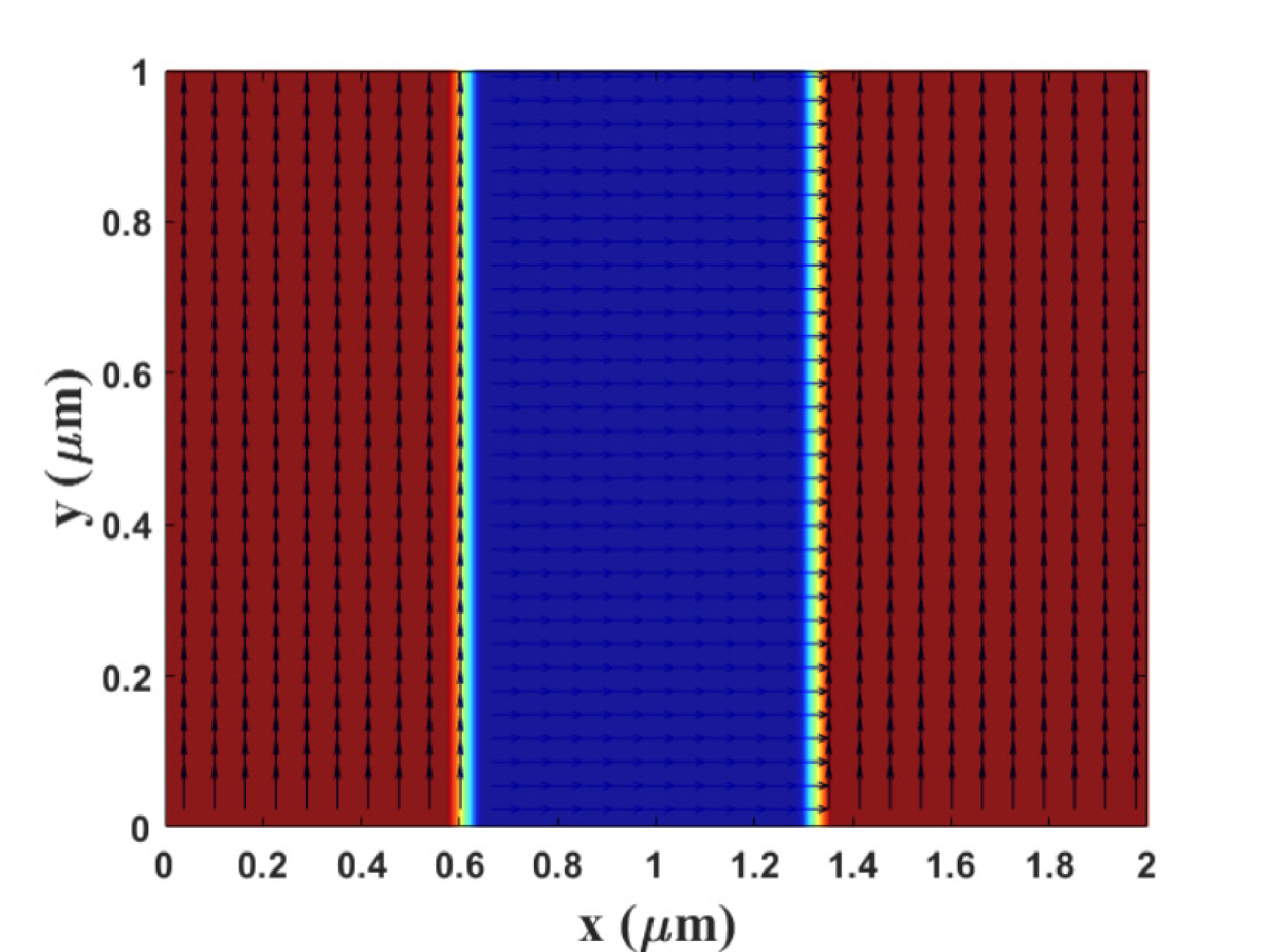}}
	\quad
	\subfloat{\label{fig:d}\includegraphics[width=1.7in]{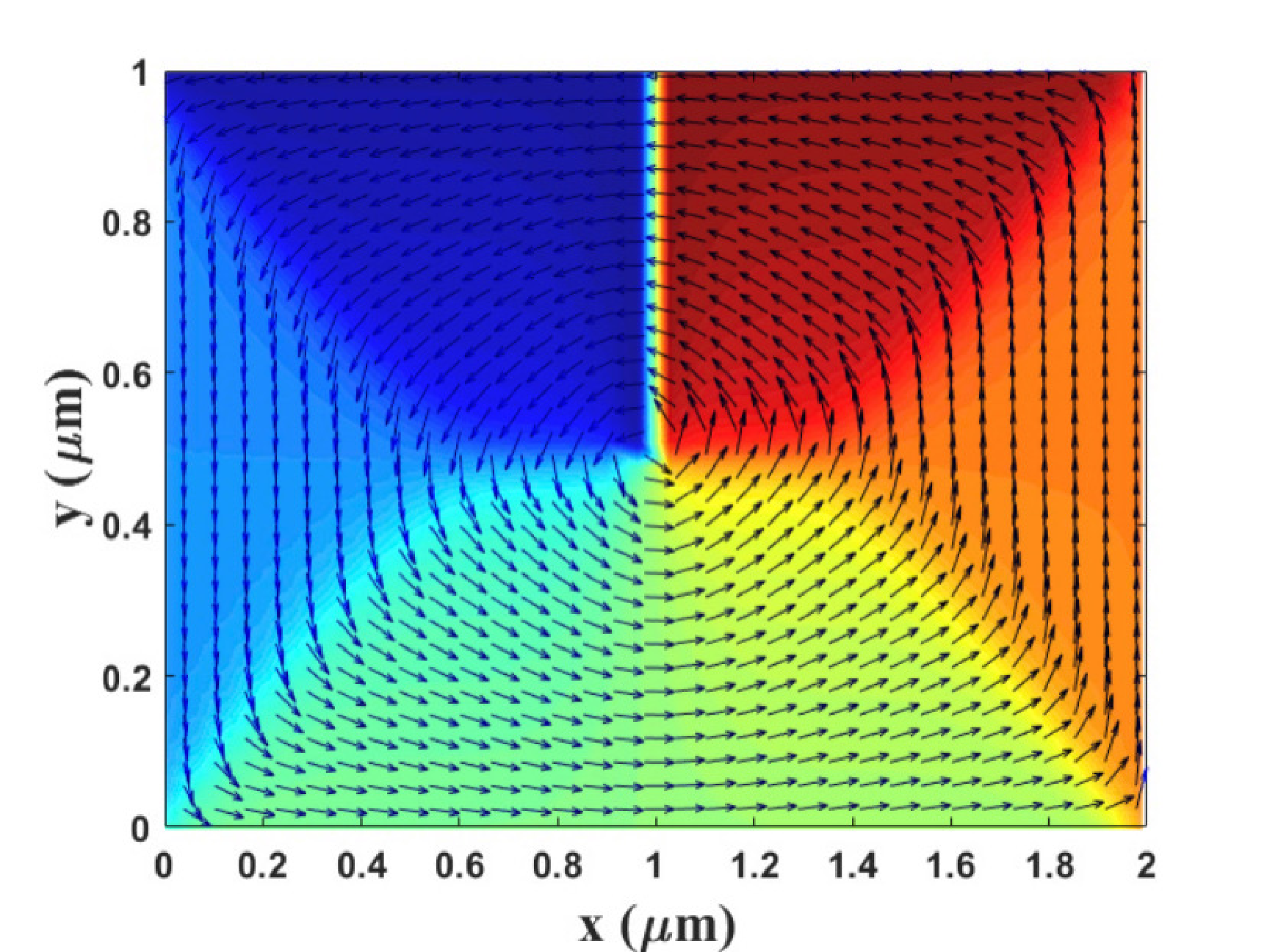}}
	\subfloat{\label{fig:e}\includegraphics[width=1.7in]{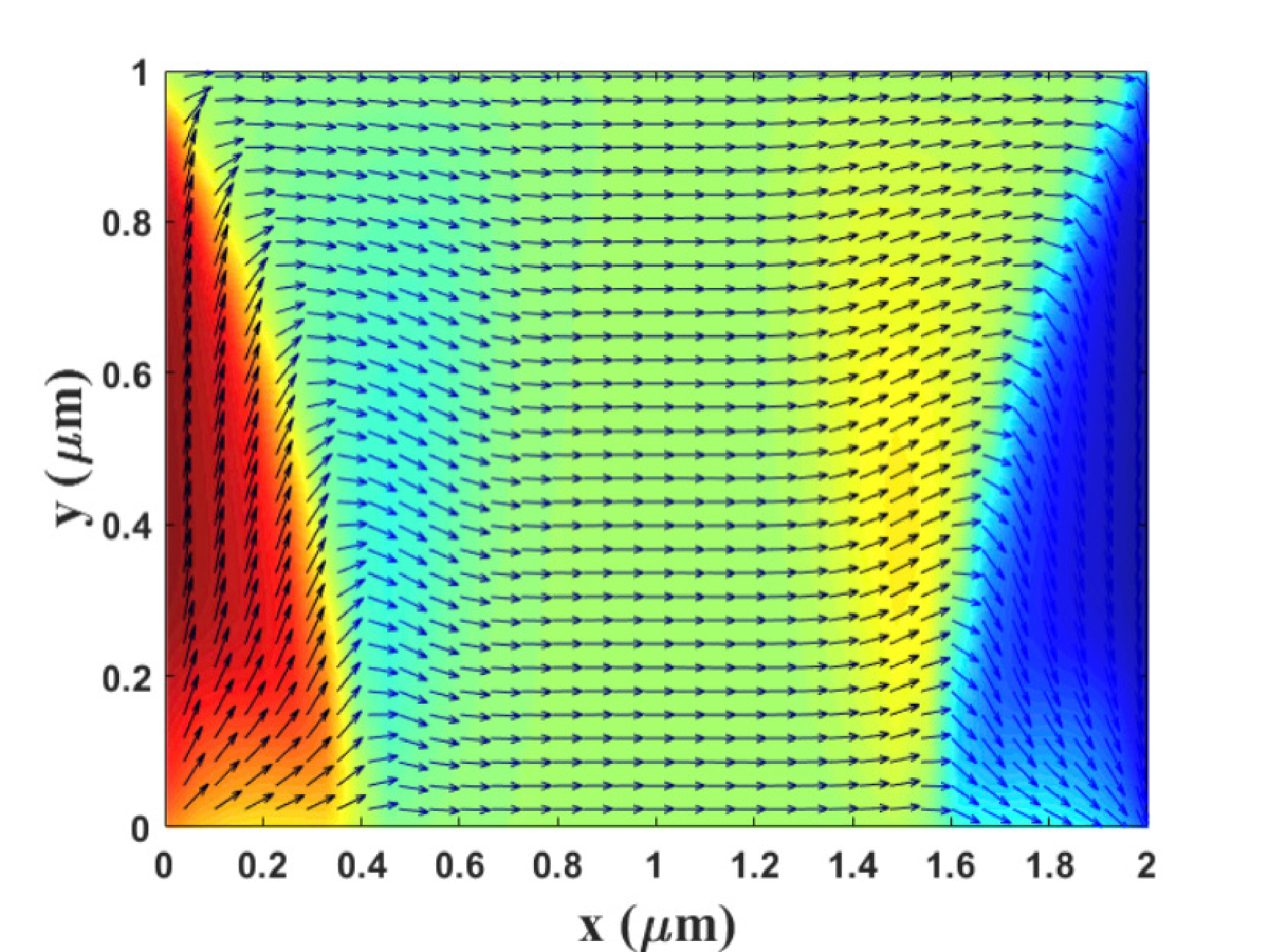}}
	\subfloat{\label{fig:f}\includegraphics[width=1.7in]{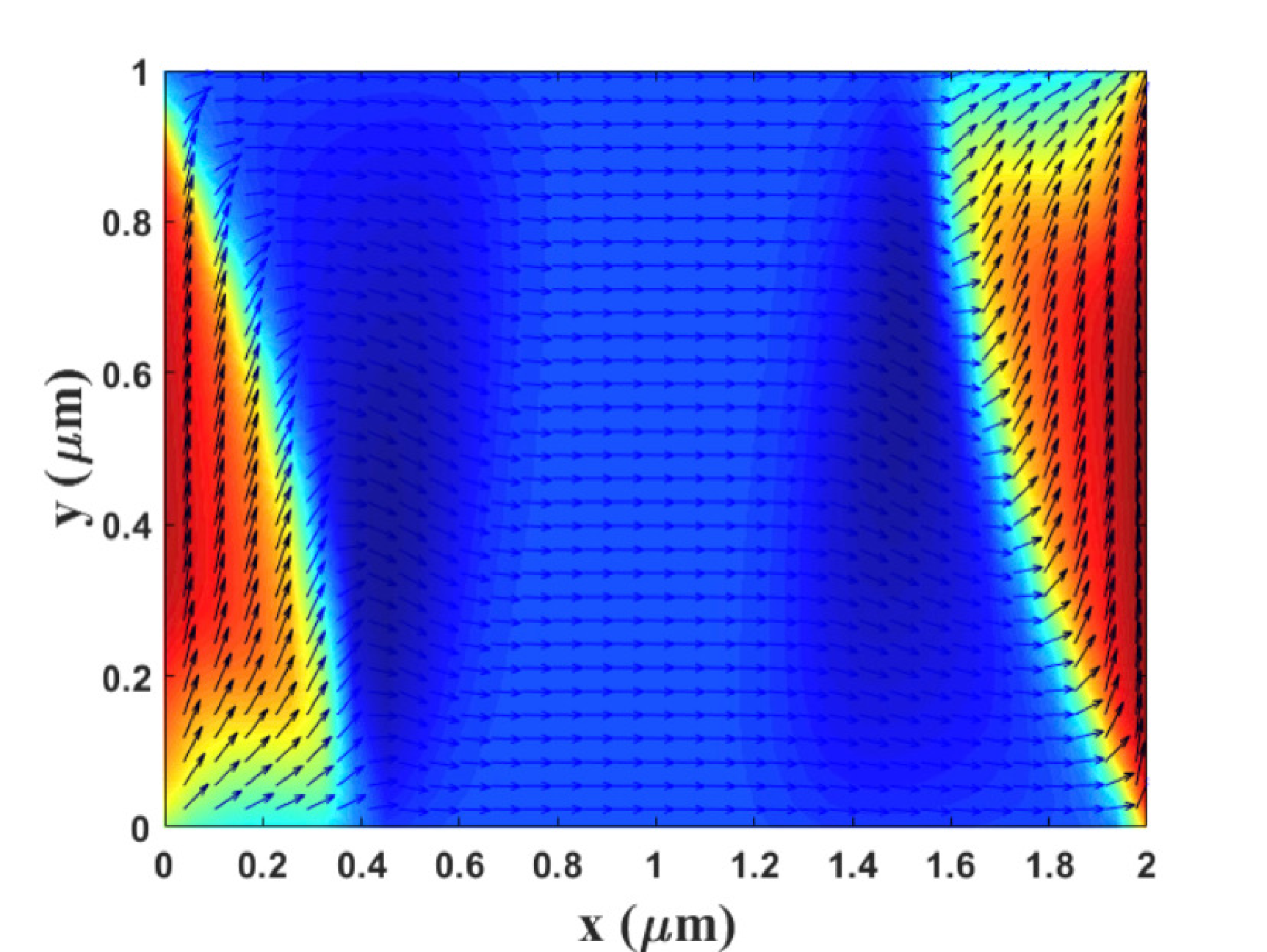}}
	\quad
	\subfloat{\label{fig:g}\includegraphics[width=1.7in]{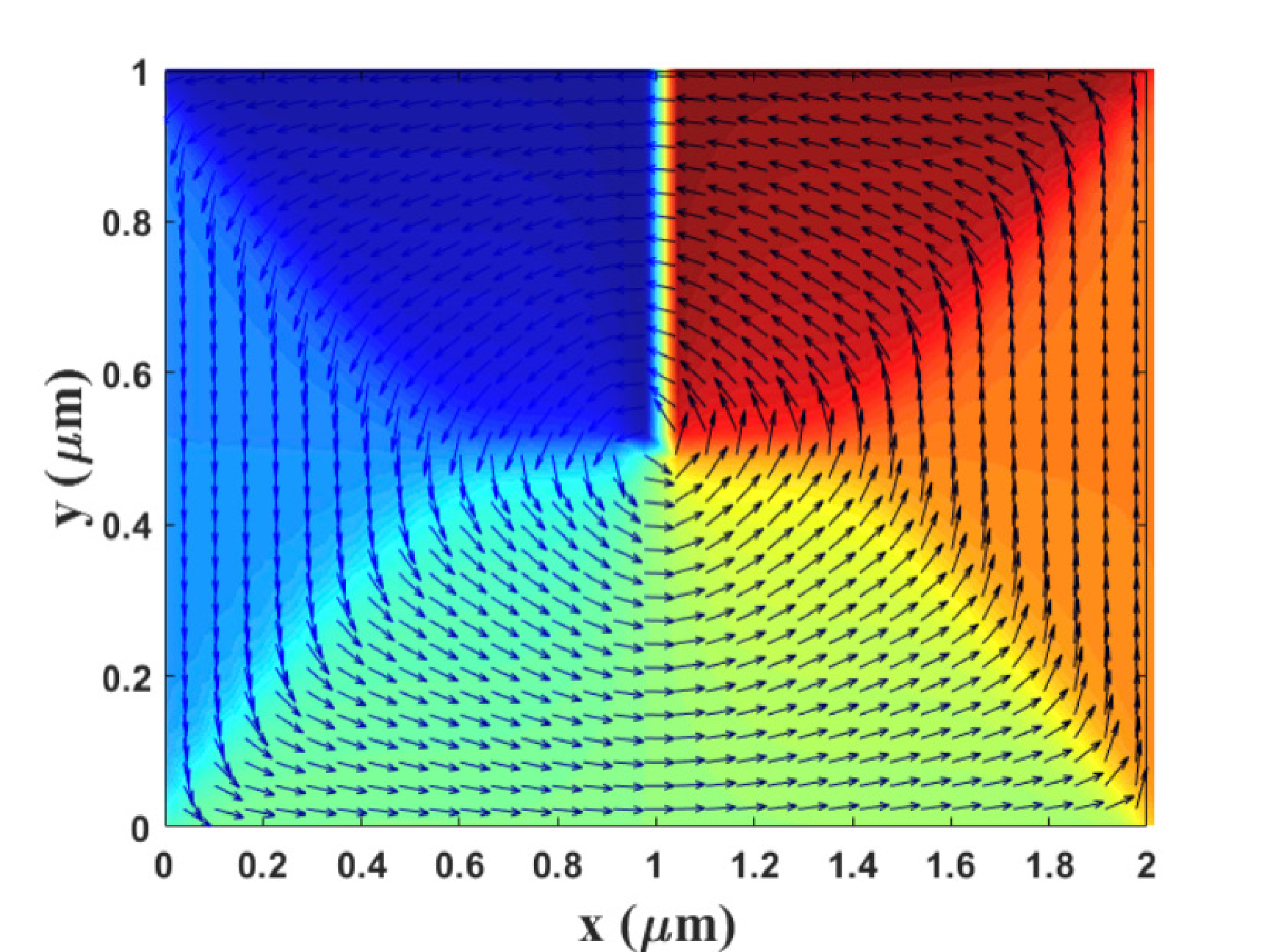}}
	\subfloat{\label{fig:h}\includegraphics[width=1.7in]{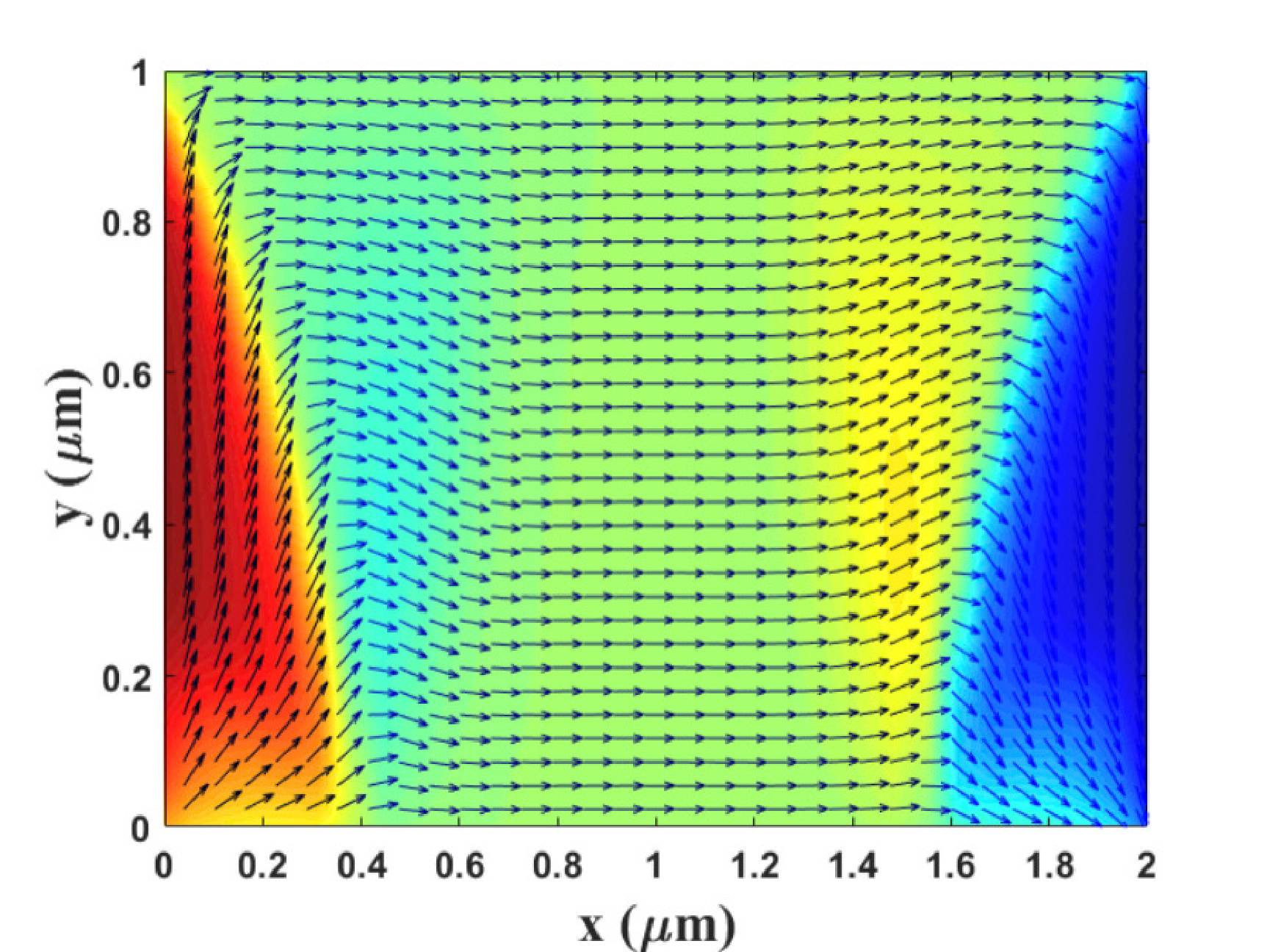}}
	\subfloat{\label{fig:i}\includegraphics[width=1.7in]{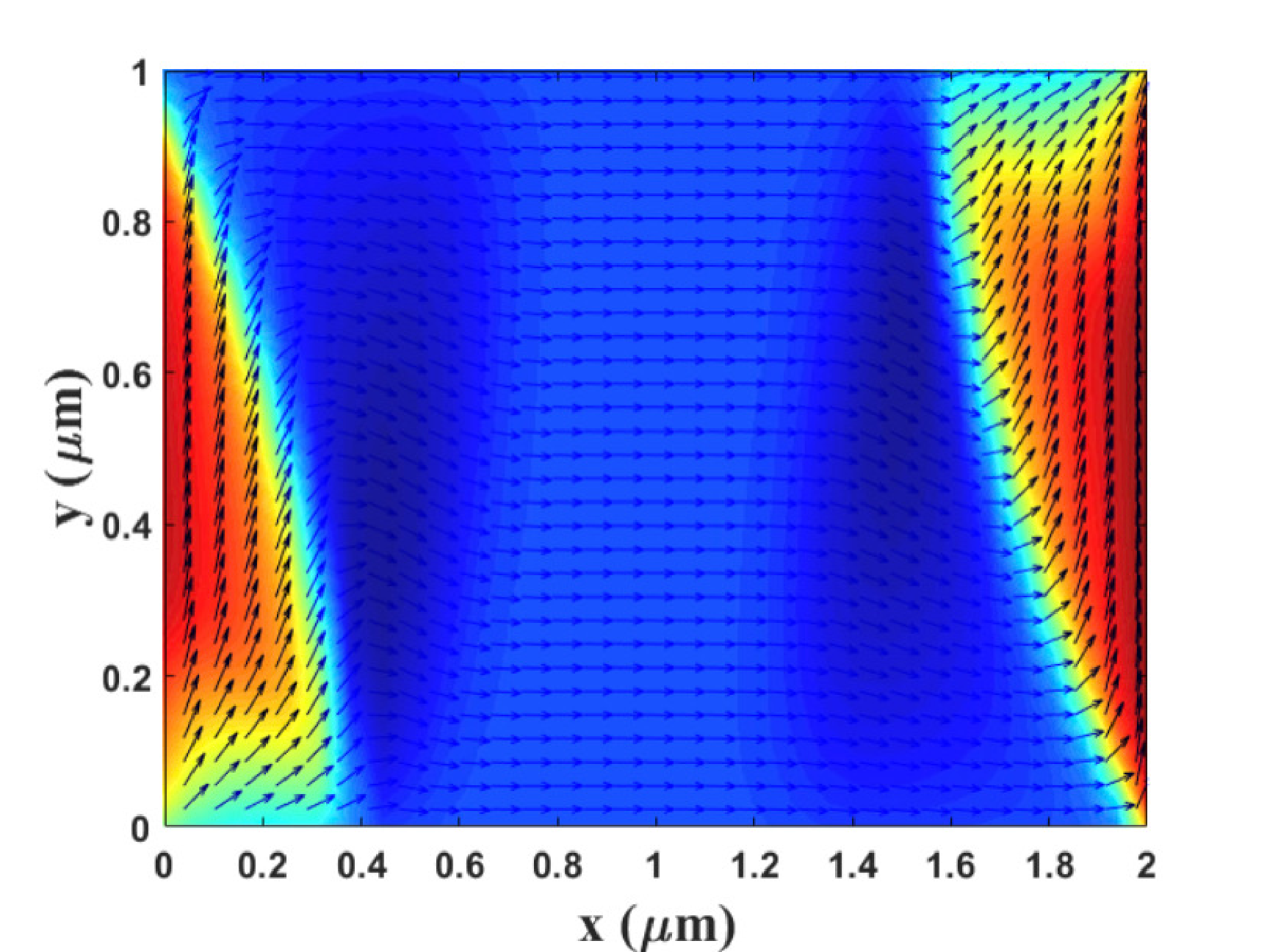}}
	\caption{Equilibrium states simulated by IMEX-RK methods. Top: Initial states; Middle: Equilibrium states by IMEX-RK2; Bottom: Equilibrium states by IMEX-RK3.
		Left: Landau state; Middle: C-state; Right: S-state.}
	\label{metastable states}
\end{figure}

\subsection{Benchmark peoblem from NIST}
To investigate the dynamical performance of IMEX-RK methods, we simulate a benchmark problem proposed by the Micromagnetic Modeling Activity Group (muMag) from NIST. A positive external field of strength ${{H}_{0}}={{\mu }_{0}}{{H}_{e}}$ in the unit of $mT$ is applied. The magnetization is able to reach a steady state. Once this state is reached, the applied external field is reduced by a certain amount and the material sample is allowed to reach another steady state again. Repeat the process until the hysteresis system attains a negative external field of strength ${{H}_{0}}$. This process is then implemented in reverse, increasing the field in small steps until the initial applied external field is reached. Afterward, we can plot the average magnetization at steady states as a function of the external field strength during the hysteresis loop. The stopping criterion for a steady state is that the relative change of the total energy is less than ${{10}^{-9}}$. For comparison with the available code $mo96a$ of the first standard problem from NIST, we set $100\times 50\times 1$ spatial resolution with mesh size $20\times 20\times 20$ $\rm n{{m}^{3}}$ and the canting angle $+{{1}^{{}^\circ }}$ of applied field from the nominal axis. The initial state is uniform and $\left[ \rm-50mT,+50mT \right]$ is split into 200 steps for both $x$-loop and $y$-loop.
\begin{figure}[htbp]
	\centering
	\subfloat[\small $H_0 // \textit{y-axis}$,~mo96a]{\label{a mo96a}\includegraphics[width=2.5in]{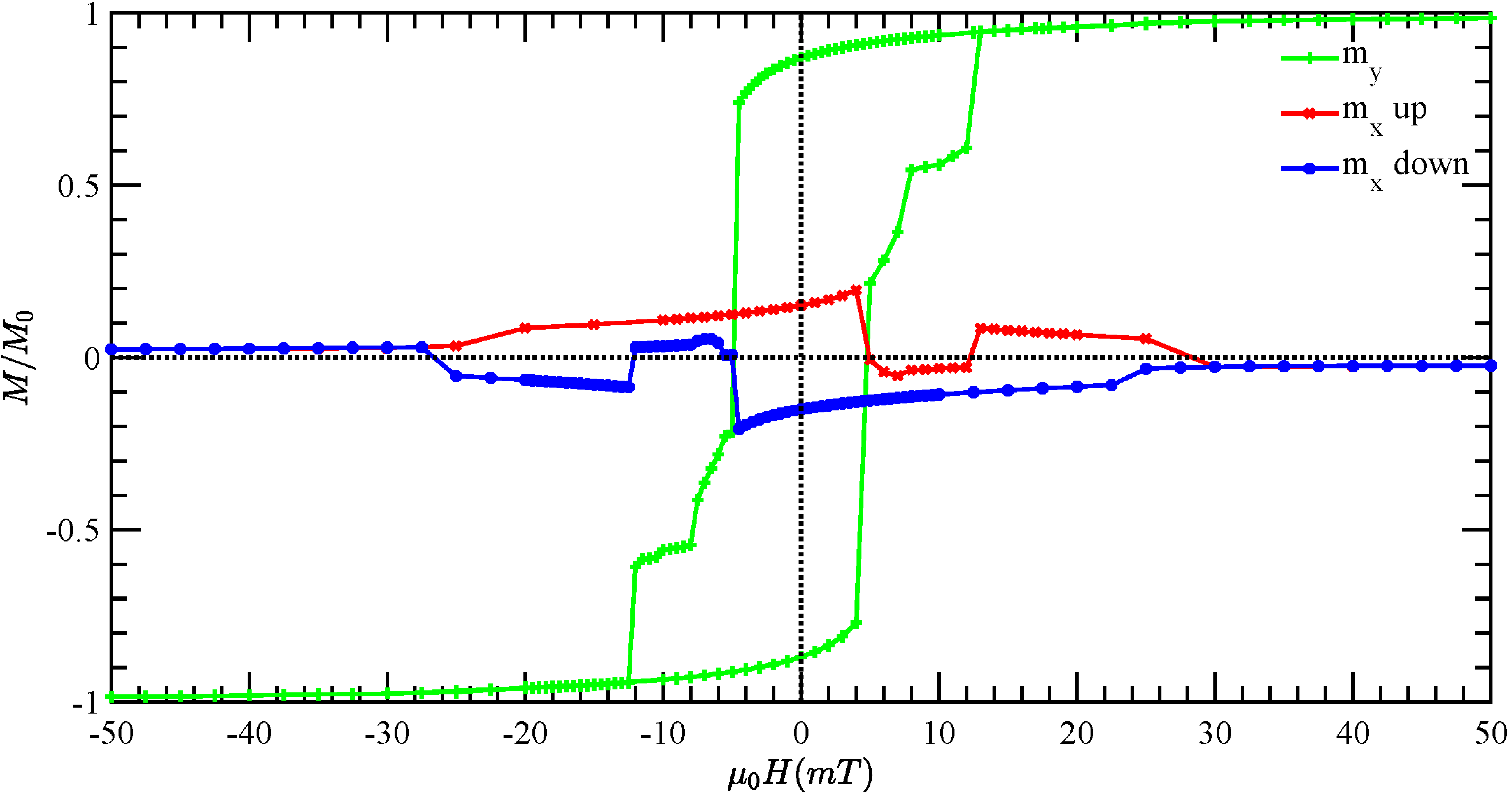}}
	\subfloat[\small $H_0 // \textit{x-axis}$,~mo96a]{\label{b mo96a}\includegraphics[width=2.5in]{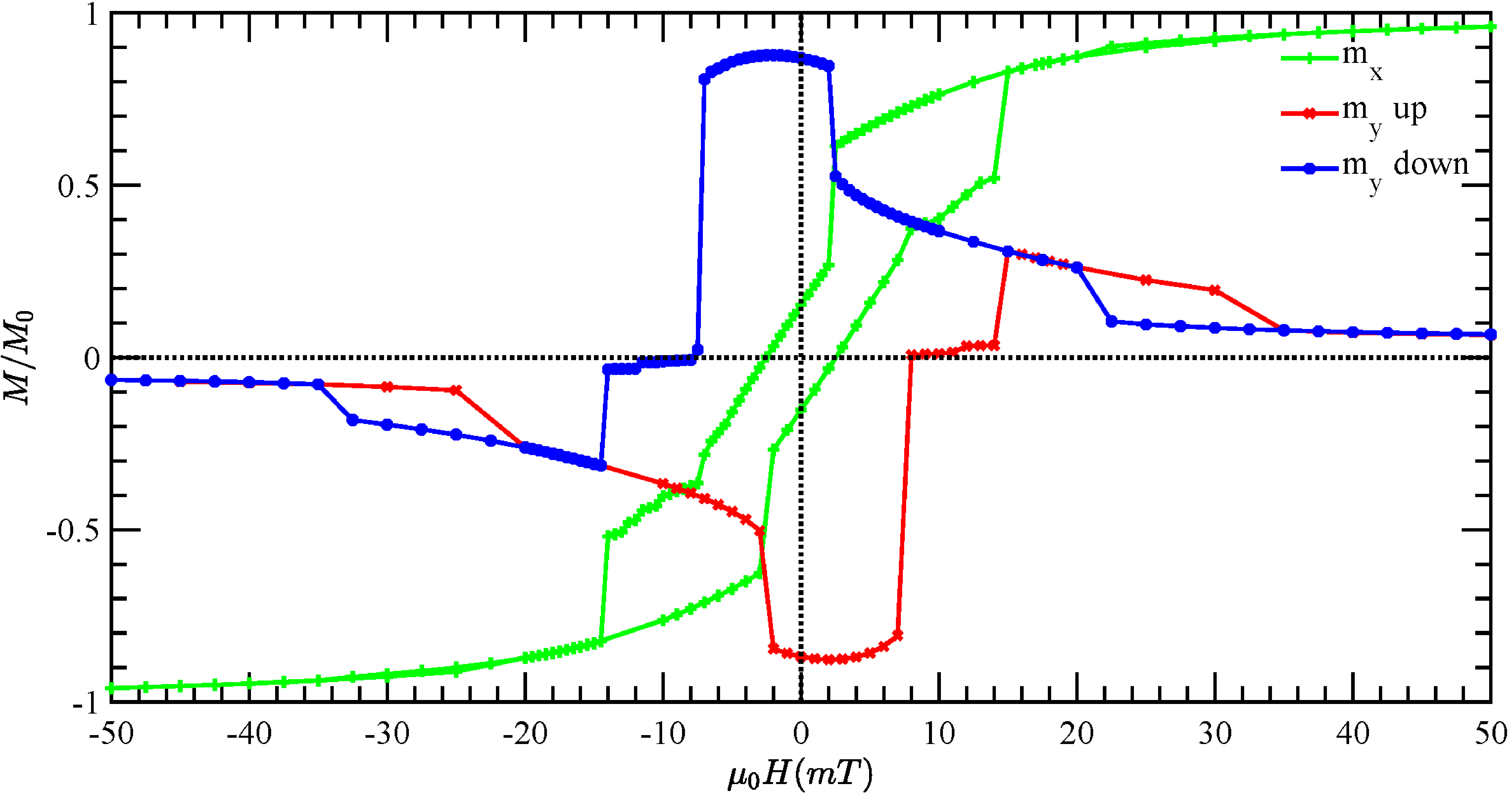}}
	\quad
	\subfloat[\small $H_0 // \textit{y-axis}$,~IMEX-RK2 ]{\label{c rk2}\includegraphics[width=2.5in]{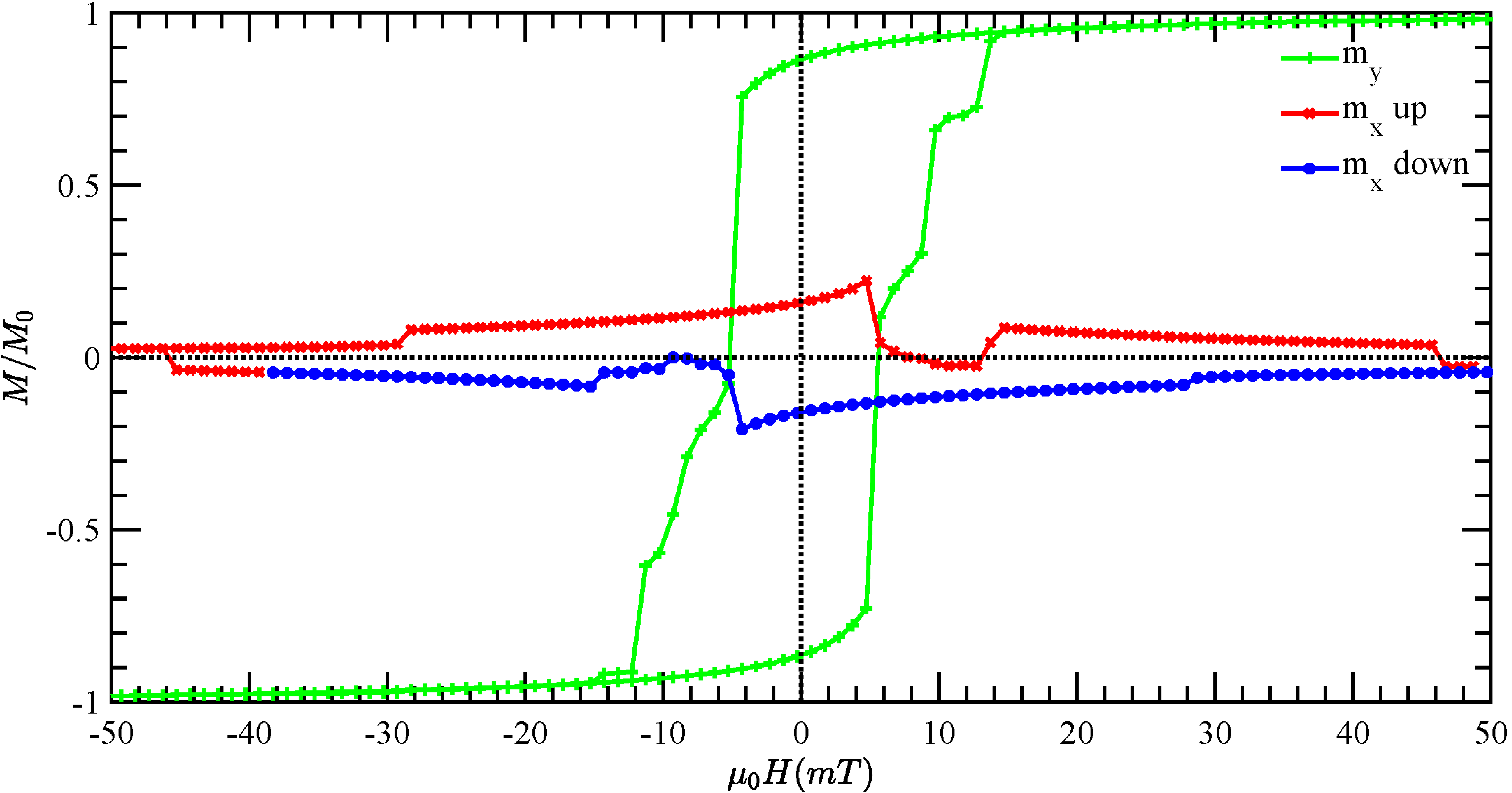}}
	\subfloat[\small $H_0 // \textit{x-axis}$,~IMEX-RK2 ]{\label{d rk2}\includegraphics[width=2.5in]{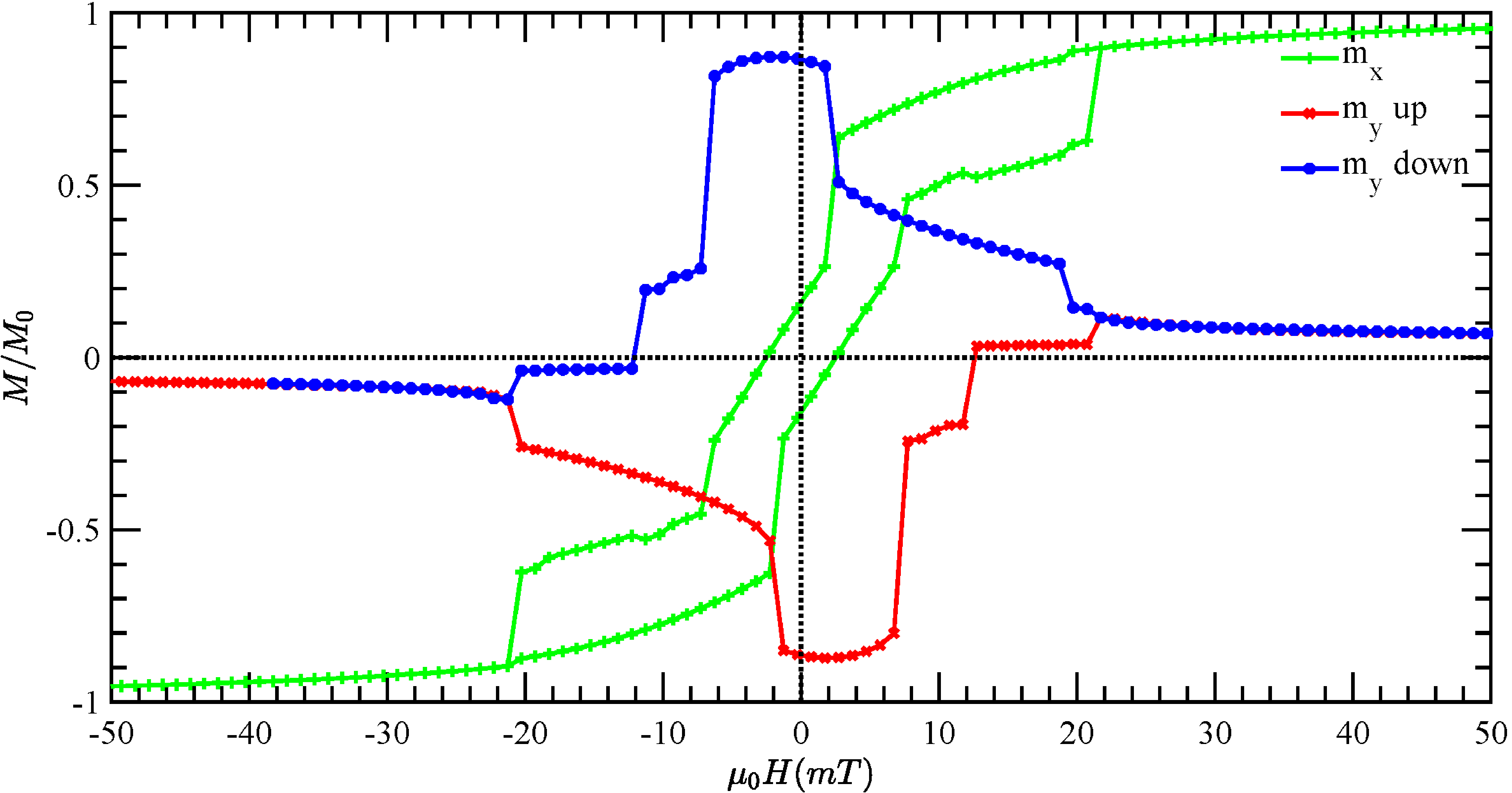}}
	\quad
	\subfloat[\small $H_0 // \textit{y-axis}$,~IMEX-RK3 ]{\label{c rk3}\includegraphics[width=2.5in]{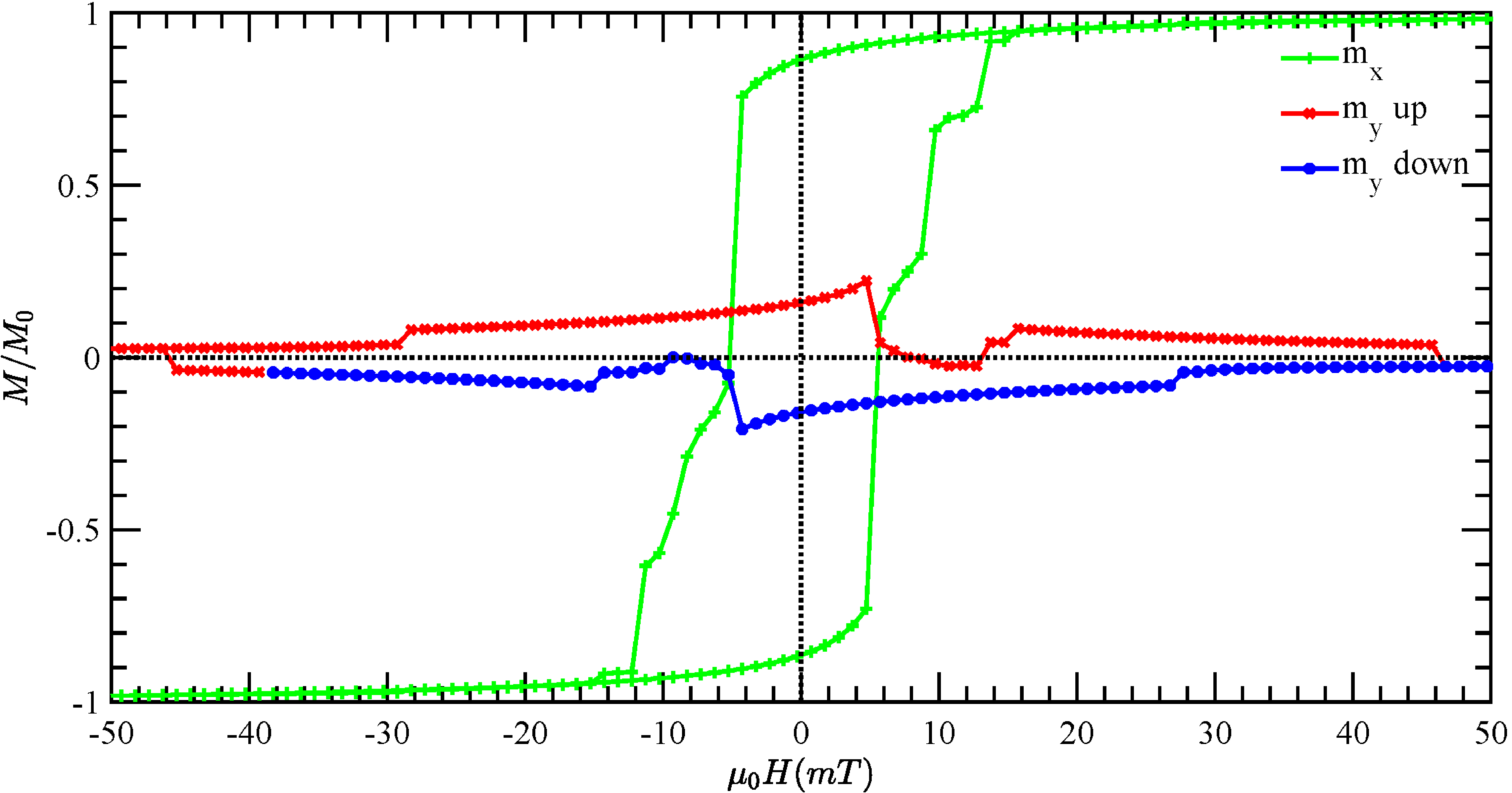}}
	\subfloat[\small $H_0 // \textit{x-axis}$,~IMEX-RK3 ]{\label{d rk3}\includegraphics[width=2.5in]{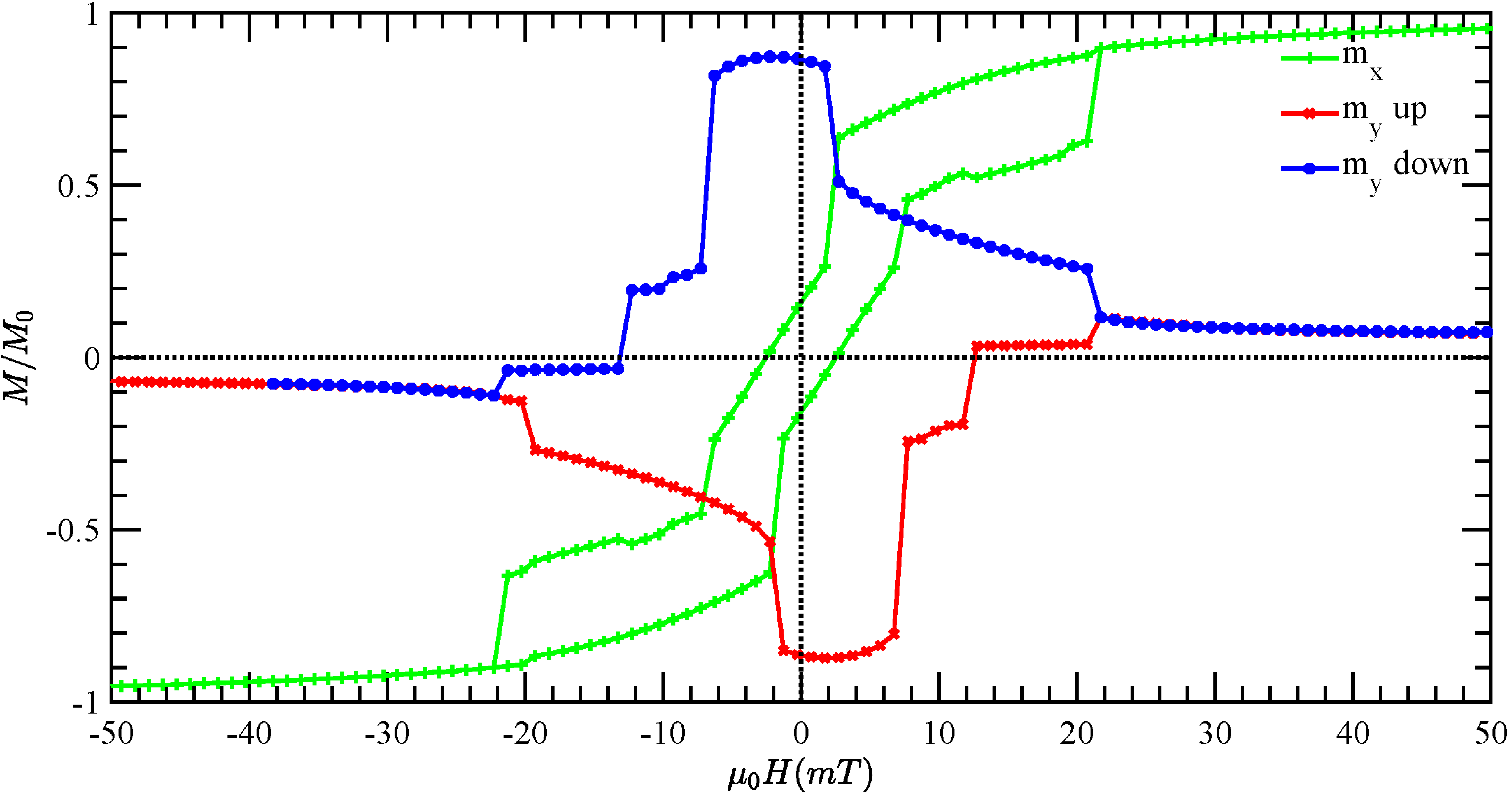}}
	\caption{ Hysteresis loops with $\alpha =0.1,\beta=3$, and the mesh size
		$20\times 20\times 20$ $n{{m}^{3}}$. The applied field is approximately parallel
		(canting angle $+{{1}^{{}^\circ }}$) to the $y$-axis (left column) and the $x$-axis
		(right column). Top:~mo96a; Middle:~IMEX-RK2; Bottom:~IMEX-RK3.}
	\label{Benchmark problem}
\end{figure}

Hysteresis loops generated by $mo96a$ are displayed in Fig.~\ref{a mo96a} and Fig.~\ref{b mo96a} when the applied field is approximately parallel to the $y$-(long) axis and the $x$-(short) axis. The average remanent magnetization in reduced units is given by $(-1.5120\times {{10}^{-1}},8.6964\times {{10}^{-1}},0)$ for the $y$-loop and $(1.5257\times {{10}^{-1}},8.6870\times {{10}^{-1}},0)$ for the $x$-loop. The coercive fields are 4.8871 $\rm mT$ in Fig.~\ref{a mo96a} and 2.5253 $\rm mT$ in Fig.~\ref{b mo96a}, respectively. Hysteresis loops generated by IMEX-RK2 method are presented in Fig.~\ref{c rk2} and Fig.~\ref{d rk2} when the applied field is approximately parallel to the long axis and the short axis, respectively. The average remanent magnetization in reduced units is $(-1.6099\times {{10}^{-1}},8.6096\times {{10}^{-1}},4.2423\times {{10}^{-7}})$ for the $y$-loop and $(-1.4274\times {{10}^{-1}},8.6656\times {{10}^{-1}},1.5753\times {{10}^{-7}})$ for the $x$-loop. The coercive fields are $ 5.4688~(\pm0.7)~\rm mT$ in Fig.~\ref{c rk2}  and $ 2.7188~(\pm0.2)~\rm mT$ in Fig.~\ref{d rk2}. Similarly, the IMEX-RK3 results are presented in the bottom row of Fig.~\ref{Benchmark problem}. The average remanent magnetization in reduced units is $(-1.6107\times {{10}^{-1}},8.6102\times {{10}^{-1}},9.5492\times {{10}^{-8}})$ for the $y$-loop and $(-1.4292\times {{10}^{-1}},8.6659\times {{10}^{-1}},4.5836\times {{10}^{-9}})$ for the $x$-loop. The coercive fields are $ 5.4688~(\pm0.7)~\rm mT$ in Fig.~\ref{c rk3} and $ 2.7188~(\pm0.2)~\rm mT$ in Fig.~\ref{d rk3}. Based on these results, we conclude that IMEX-RK methods work well for the benchmark problems from NIST, both qualitatively and quantitatively.

\section{Numerical stability and convergence analysis for the proposed SSP-IMEX-RK2 scheme} \label{sec: convergence}

A theoretical analysis for the proposed IMEX-RK numerical schemes turns out to be highly challenging, due to the multi-stage nature, as well as the highly complicated nonlinear terms in the vector form. For simplicity, we focus on the SSP-IMEX-RK2 numerical algorithm~\eqref{SSP-IMEX-RK2}. In the first step, a numerical stability is stablished for the linear part, i.e., in the simple case with only linear diffusion part (the term $\beta \Delta \boldsymbol m$) taken into consideration. Afterward, we provide a convergence analysis of the SSP-IMEX-RK2 scheme~\eqref{SSP-IMEX-RK2} for a simplified nonlinear model of LL equation, in which only the damping term is considered, while the gyromagnetic term is skipped.

\subsection{Linear stability estimate for the SSP-IMEX-RK2 scheme~\eqref{SSP-IMEX-RK2}} In the simple case with only linear diffusion term is considered, we denote $L_h = \beta \Delta_h$. The SSP-IMEX-RK2 scheme~\eqref{SSP-IMEX-RK2} is simplified as
\begin{equation}\label{SSP-IMEX-RK2-Linear}
	\left\{\begin{array}{l}
		\boldsymbol{\tilde{m}}_{1}=\boldsymbol{{m}}_{n} \\
		{\boldsymbol{\tilde{m}}_{2}}=\boldsymbol{{\tilde{m}}}_{1} +\frac{k}{4} L_h ({\boldsymbol{\tilde{m}}_{2}}) \\
		{\boldsymbol{\tilde{m}}_{3}}=\boldsymbol{{\tilde{m}}}_{1} +\frac{k}{4} L_h ({\boldsymbol{\tilde{m}}_{3}}) \\
		{\boldsymbol{\tilde{m}}_{4}}=\boldsymbol{{\tilde{m}}}_{1} + \frac{k}{3} \left( L_h ({\boldsymbol{\tilde{m}}_{2}}) + L_h({\boldsymbol{\tilde{m}}_{3}}) + L_h({\boldsymbol{\tilde{m}}_{4}}) \right) \\
		\boldsymbol{{m}}_{n+1}=\boldsymbol{{\tilde{m}}}_{1} + \frac{k}{3} \left( L_h ({\boldsymbol{\tilde{m}}_{2}}) + L_h ({\boldsymbol{\tilde{m}}_{3}}) + L_h ({\boldsymbol{\tilde{m}}_{4}}) \right)
	\end{array}\right. .
\end{equation}
For the convenience of the stability analysis, numerical system could be rewritten as
\begin{align}
	&
	\frac{\tilde{\boldsymbol m}_2 - \boldsymbol m_n}{k} = \frac{\beta}{4} \Delta_h \tilde{\boldsymbol m}_2 ,   \label{SSP-IMEX-RK2-Linear-1-1}
	\\
	&
	\frac{\tilde{\boldsymbol m}_3 - \boldsymbol m_n}{k} = \frac{\beta}{4} \Delta_h \tilde{\boldsymbol m}_3 ,   \label{SSP-IMEX-RK2-Linear-1-2}
	\\
	&
	\frac{\tilde{\boldsymbol m}_4 - \boldsymbol m_n}{k} = \frac{\beta}{3}  \Delta_h ( \tilde{\boldsymbol m}_2  + \tilde{\boldsymbol m}_3
	+  \tilde{\boldsymbol m}_4 ) ,   \label{SSP-IMEX-RK2-Linear-1-3}
	\\
	& \boldsymbol m_{n+1} = \tilde{\boldsymbol m}_4 . \label{SSP-IMEX-RK2-Linear-1-4}
\end{align}
Moreover, to reveal the numerical stability of this Runge-Kutta style algorithm, we subtract~\eqref{SSP-IMEX-RK2-Linear-1-1} from \eqref{SSP-IMEX-RK2-Linear-1-2}, \eqref{SSP-IMEX-RK2-Linear-1-2} from \eqref{SSP-IMEX-RK2-Linear-1-3}, and arrive at the following equivalent numerical system:
\begin{align}
	&
	\frac{\tilde{\boldsymbol m}_2 - \boldsymbol m_n}{k} = \frac{\beta}{4} \Delta_h \tilde{\boldsymbol m}_2 ,   \label{SSP-IMEX-RK2-Linear-2-1}
	\\
	&
	\frac{\tilde{\boldsymbol m}_3 - \tilde{\boldsymbol m}_2}{k} = \frac{\beta}{4} \Delta_h ( \tilde{\boldsymbol m}_3 - \tilde{\boldsymbol m}_2 ) ,   \label{SSP-IMEX-RK2-Linear-2-2}
	\\
	&
	\frac{\tilde{\boldsymbol m}_4 - \tilde{\boldsymbol m}_3}{k} = \beta  \Delta_h ( \frac13 \tilde{\boldsymbol m}_2  + \frac{1}{12} \tilde{\boldsymbol m}_3
	+  \frac13 \tilde{\boldsymbol m}_4 ) ,   \label{SSP-IMEX-RK2-Linear-2-3}
	\\
	& \boldsymbol m_{n+1} = \tilde{\boldsymbol m}_4 . \label{SSP-IMEX-RK2-Linear-2-4}
\end{align}

Taking a discrete inner product with~\eqref{SSP-IMEX-RK2-Linear-2-1} by $2 \tilde{\boldsymbol m}_2$ gives
\begin{equation}
	\| \tilde{\boldsymbol m}_2 \|_2^2 - \| \boldsymbol m_n \|_2^2 + \| \tilde{\boldsymbol m}_2 - \boldsymbol m_n \|_2^2
	+ \frac{\beta}{2} k \| \nabla_h \tilde{\boldsymbol m}_2 \|_2^2 = 0 ,  \label{stability-1}
\end{equation}
in which the summation-by-parts formula~\eqref{sum1} has been applied. Similarly, taking a discrete inner product with~\eqref{SSP-IMEX-RK2-Linear-2-2} by $2 \tilde{\boldsymbol m}_3$,  with~\eqref{SSP-IMEX-RK2-Linear-2-3} by $2 \tilde{\boldsymbol m}_4$, leads to
\begin{align}
	&
	\| \tilde{\boldsymbol m}_3 \|_2^2 - \| \tilde{\boldsymbol m}_2 \|_2^2 + \| \tilde{\boldsymbol m}_3 - \tilde{\boldsymbol m}_2 \|_2^2
	+ \frac{\beta}{2} k \| \nabla_h \tilde{\boldsymbol m}_3 \|_2^2
	= \frac{\beta}{2} k \langle \nabla_h \tilde{\boldsymbol m}_2 ,  \nabla_h \tilde{\boldsymbol m}_3 \rangle ,  \label{stability-2}
	\\
	&
	\| \tilde{\boldsymbol m}_4 \|_2^2 - \| \tilde{\boldsymbol m}_3 \|_2^2 + \| \tilde{\boldsymbol m}_4 - \tilde{\boldsymbol m}_3 \|_2^2
	+ \frac{2 \beta}{3} k \| \nabla_h \tilde{\boldsymbol m}_4 \|_2^2   \nonumber
	\\
	&  \qquad \qquad
	= - \frac{2 \beta}{3} k \langle \nabla_h \tilde{\boldsymbol m}_2 ,  \nabla_h \tilde{\boldsymbol m}_4 \rangle
	- \frac{\beta}{6} k \langle \nabla_h \tilde{\boldsymbol m}_3 ,  \nabla_h \tilde{\boldsymbol m}_4 \rangle  .  \label{stability-3}
\end{align}
In turn, a combination of~\eqref{stability-1} and \eqref{stability-3} indicates that
\begin{align}
	&
	\| \tilde{\boldsymbol m}_4 \|_2^2 - \| \boldsymbol m_n \|_2^2 + \| \tilde{\boldsymbol m}_2 - \boldsymbol m_n \|_2^2
	+ \| \tilde{\boldsymbol m}_3 - \tilde{\boldsymbol m}_2 \|_2^2
	+ \| \tilde{\boldsymbol m}_4 - \tilde{\boldsymbol m}_3 \|_2^2   \nonumber
	\\
	& \qquad
	+ \frac{\beta}{2} k \| \nabla_h \tilde{\boldsymbol m}_2 \|_2^2
	+ \frac{\beta}{2} k \| \nabla_h \tilde{\boldsymbol m}_3 \|_2^2
	+ \frac{2 \beta}{3} k \| \nabla_h \tilde{\boldsymbol m}_4 \|_2^2   \nonumber
	\\
	&  \qquad
	= \frac{\beta}{2} k \langle \nabla_h \tilde{\boldsymbol m}_2 ,  \nabla_h \tilde{\boldsymbol m}_3 \rangle
	- \frac{2 \beta}{3} k \langle \nabla_h \tilde{\boldsymbol m}_2 ,  \nabla_h \tilde{\boldsymbol m}_4 \rangle
	- \frac{\beta}{6} k \langle \nabla_h \tilde{\boldsymbol m}_3 ,  \nabla_h \tilde{\boldsymbol m}_4 \rangle  .  \label{stability-4}
\end{align}
Meanwhile, a careful application of Cauchy inequality implies that
\begin{align}
	&
	\frac12 \langle \nabla_h \tilde{\boldsymbol m}_2 ,  \nabla_h \tilde{\boldsymbol m}_3 \rangle
	\le \frac14 ( \| \nabla_h \tilde{\boldsymbol m}_2 \|_2^2 + \|  \nabla_h \tilde{\boldsymbol m}_3  \|_2^2 ) ,
	\label{stability-5-1}
	\\
	&
	- \frac23 \langle \nabla_h \tilde{\boldsymbol m}_2 ,  \nabla_h \tilde{\boldsymbol m}_4 \rangle
	\le \frac29 \| \nabla_h \tilde{\boldsymbol m}_2 \|_2^2 + \frac12 \|  \nabla_h \tilde{\boldsymbol m}_4  \|_2^2 ,
	\label{stability-5-2}
	\\
	&
	- \frac16 \langle \nabla_h \tilde{\boldsymbol m}_3 ,  \nabla_h \tilde{\boldsymbol m}_4 \rangle
	\le \frac{1}{12} \| \nabla_h \tilde{\boldsymbol m}_3 \|_2^2 + \frac{1}{12} \|  \nabla_h \tilde{\boldsymbol m}_4  \|_2^2 .
	\label{stability-5-3}
\end{align}
Therefore, a substitution of these estimates into~\eqref{stability-4} yields
\begin{align}
	&
	\| \tilde{\boldsymbol m}_4 \|_2^2 - \| \boldsymbol m_n \|_2^2 + \| \tilde{\boldsymbol m}_2 - \boldsymbol m_n \|_2^2
	+ \| \tilde{\boldsymbol m}_3 - \tilde{\boldsymbol m}_2 \|_2^2
	+ \| \tilde{\boldsymbol m}_4 - \tilde{\boldsymbol m}_3 \|_2^2   \nonumber
	\\
	& \qquad
	+ \frac{\beta}{36} k \| \nabla_h \tilde{\boldsymbol m}_2 \|_2^2
	+ \frac{\beta}{6} k \| \nabla_h \tilde{\boldsymbol m}_3 \|_2^2
	+ \frac{\beta}{12} k \| \nabla_h \tilde{\boldsymbol m}_4 \|_2^2   \le 0  .  \label{stability-6}
\end{align}
By the fact that $\boldsymbol m_{n+1} = \tilde{\boldsymbol m}_4$, we obtain the linear stability estimate for the SSP-IMEX-RK2 scheme:
\begin{align}
	&
	\| \boldsymbol m_{n+1} \|_2^2 - \| \boldsymbol m_n \|_2^2 + \| \tilde{\boldsymbol m}_2 - \boldsymbol m_n \|_2^2
	+ \| \tilde{\boldsymbol m}_3 - \tilde{\boldsymbol m}_2 \|_2^2
	+ \| \boldsymbol m_{n+1} - \tilde{\boldsymbol m}_3 \|_2^2   \nonumber
	\\
	& \qquad
	+ \frac{\beta}{36} k \| \nabla_h \tilde{\boldsymbol m}_2 \|_2^2
	+ \frac{\beta}{6} k \| \nabla_h \tilde{\boldsymbol m}_3 \|_2^2
	+ \frac{\beta}{12} k \| \nabla_h \boldsymbol m_{n+1} \|_2^2   \le 0  , \label{stability-7-1}
\end{align}
which in turn gives the $\ell^\infty (0, T: \ell^2) \cap \ell^2 (0, T; H_h^1)$ bound of the numerical solution, if only is the linear diffusion part is considered:
\begin{equation}
	\| \boldsymbol m_{n+1} \|_2 + \Big( \frac{\beta}{12} k \sum_{j=1}^{n+1} \| \nabla_h \boldsymbol m_j \|_2^2 \Big)^\frac12
	\le \| \boldsymbol m_0 \|_2 . \label{stability-7-2}
\end{equation}

\begin{remark}
	In comparison with the standard three-stage IMEX-RK2 method~\eqref{IMEX-RK2}, the SSP-IMEX-RK2 algorithm~\eqref{SSP-IMEX-RK2} contains four stages, with three intermediate numerical solutions, so that more computations are needed at each time step. Meanwhile, the stability analysis in this section reveals that, this numerical algorithm contains stronger diffusion coefficients than the standard IMEX-RK2 algorithm. In more details, the diffusion part in the standard IMEX-RK2 method~\eqref{IMEX-RK2} essentially correspond to the Crank-Nicolson approximation, which may face a serious theoretical difficulty in the nonlinear analysis, while additional diffusion terms appear in the stability estimate~\eqref{stability-6} for the SSP-IMEX-RK2 algorithm~\eqref{SSP-IMEX-RK2}. This subtle fact will greatly facilitate the convergence analysis in the next subsection. 
\end{remark}

\subsection{Convergence analysis of the SSP-IMEX-RK2 scheme~\eqref{SSP-IMEX-RK2} for a simplified nonlinear LL equation} We consider a simplified nonlinear LL equation~\eqref{eq-3} in this subsection, in which only the damping term is included, while the gyromagnetic term is skipped for simplicity:
\begin{align}
	{{\boldsymbol m}_{t}}= -\alpha \boldsymbol m \times ( \boldsymbol m\times (\epsilon \Delta \boldsymbol m+\emph{ \textbf{f}}) ) .
	\label{equation-LL-mod}
\end{align}

For a vector function $\boldsymbol m$ with $| \boldsymbol m | \equiv 1$, the following identity is recalled
\begin{equation}
	- \boldsymbol m \times ( \boldsymbol m \times \Delta \boldsymbol m ) = \Delta \boldsymbol m + | \nabla \boldsymbol m |^2 \boldsymbol m  .
	\label{LL-reformulation-1}
\end{equation}
In turn, by taking $\beta = \alpha \epsilon$, the nonlinear term $N (\boldsymbol m)$ could be rewritten as
\begin{equation}
	\begin{aligned}
		N (\boldsymbol m) = & - \alpha \boldsymbol m \times ( \boldsymbol m \times ( \epsilon \Delta \boldsymbol m + \boldsymbol f ) ) - \beta \Delta \boldsymbol m
		\\
		= & \beta ( \Delta \boldsymbol m + | \nabla \boldsymbol m |^2 \boldsymbol m ) - \alpha \boldsymbol m \times ( \boldsymbol m \times \boldsymbol f )
		- \beta \Delta \boldsymbol m
		\\
		= &
		\beta | \nabla \boldsymbol m |^2 \boldsymbol m  - \alpha \boldsymbol m \times ( \boldsymbol m \times \boldsymbol f ) .
	\end{aligned}
	\label{LL-reformulation-2}
\end{equation}
Subsequently, the discrete form of the nonlinear term becomes
\begin{equation}
	N_h (\boldsymbol m) = \beta | {\mathcal A}_h \nabla_h \boldsymbol m |^2 \boldsymbol m - \alpha \boldsymbol m \times ( \boldsymbol m \times \boldsymbol f ) ,
	\label{NL-discrete-1}
\end{equation}
in which ${\mathcal A}_h\nabla_h$ (second approximation to the gradient operator) is an average gradient operator defined for the gird function $\boldsymbol m=(u_h, v_h, w_h)^T\in \boldsymbol X$ as $\mathcal{A}_h\nabla_h\boldsymbol m_h=\nabla_h\mathcal{A}_h\boldsymbol m_h$ and $\mathcal{A}_h \boldsymbol m =(\mathcal{A}_xu_h,\mathcal{A}_y v_h,\mathcal{A}_z w_h)$:
\begin{equation*}
	\mathcal{A}_x u_{i,j,\ell}=\frac{u_{i,j,\ell}+u_{i-1,j,\ell}}{2},\,
	\mathcal{A}_y v_{i,j,\ell}=\frac{v_{i,j,\ell}+v_{i,j-1,\ell}}{2},\,
	\mathcal{A}_z w_{i,j,\ell}=\frac{w_{i,j,\ell}+w_{i,j,\ell-1}}{2}.
\end{equation*}

As a result, the SSP-IMEX-RK2 numerical algorithm is formulated as
\begin{equation} \label{SSP-IMEX-RK2-NL}
	\begin{aligned}
		\boldsymbol{\tilde{m}}_{1}= & \boldsymbol{{m}}_{n} , \\
		{\boldsymbol{\tilde{m}}_{2}}= & \boldsymbol{{\tilde{m}}}_{1} +\frac{k}{4} L_h ({\boldsymbol{\tilde{m}}_{2}}) , \\
		{\boldsymbol{\tilde{m}}_{3}}= & \boldsymbol{{\tilde{m}}}_{1} + \frac{k}{2} N_h (\boldsymbol{{\tilde{m}}}_{2} ) +\frac{k}{4} L_h ({\boldsymbol{\tilde{m}}_{3}}) , \\
		{\boldsymbol{\tilde{m}}_{4}}= & \boldsymbol{{\tilde{m}}}_{1} + \frac{k}{2} \left( N_h ( \boldsymbol{{\tilde{m}}}_{2} ) + N_h ( \boldsymbol{{\tilde{m}}}_{3} ) \right) + \frac{k}{3} \left( L_h ({\boldsymbol{\tilde{m}}_{2}}) + L_h ({\boldsymbol{\tilde{m}}_{3}}) + L_h ({\boldsymbol{\tilde{m}}_{4}}) \right) , \\
		\boldsymbol{{m}}_{n+1}= & \boldsymbol{{\tilde{m}}}_{1} + \frac{k}{3} \left( N_h ( \boldsymbol{{\tilde{m}}}_{2} ) + N_h ( \boldsymbol{{\tilde{m}}}_{3} ) + N_h ( \boldsymbol{{\tilde{m}}}_{4} ) \right)
		\\
		&
		+ \frac{k}{3} \left( L_h ({\boldsymbol{\tilde{m}}_{2}}) + L_h ({\boldsymbol{\tilde{m}}_{3}}) + L_h ({\boldsymbol{\tilde{m}}_{4}}) \right) .
	\end{aligned}
\end{equation}
For the convenience of the Runge-Kutta analysis, this numerical system could be equivalently rewritten as
\begin{align}
	&
	\frac{\tilde{\boldsymbol m}_2 - \boldsymbol m_n}{k} = \frac{\beta}{4} \Delta_h \tilde{\boldsymbol m}_2 ,   \label{SSP-IMEX-RK2-NL-2-1}
	\\
	&
	\frac{\tilde{\boldsymbol m}_3 - \tilde{\boldsymbol m}_2}{k} = \frac12 N_h ( \tilde{\boldsymbol m}_2 )
	+ \frac{\beta}{4} \Delta_h ( \tilde{\boldsymbol m}_3 - \tilde{\boldsymbol m}_2 ) ,   \label{SSP-IMEX-RK2-NL-2-2}
	\\
	&
	\frac{\tilde{\boldsymbol m}_4 - \tilde{\boldsymbol m}_3}{k} = \frac12 N_h ( \tilde{\boldsymbol m}_3 )
	+ \beta  \Delta_h ( \frac13 \tilde{\boldsymbol m}_2  + \frac{1}{12} \tilde{\boldsymbol m}_3
	+  \frac13 \tilde{\boldsymbol m}_4 ) ,   \label{SSP-IMEX-RK2-NL-2-3}
	\\
	& \frac{\boldsymbol m_{n+1} - \tilde{\boldsymbol m}_4}{k} =  - \frac16 (  N_h ( \tilde{\boldsymbol m}_2 ) + N_h ( \tilde{\boldsymbol m}_3 ) )
	+  \frac13 N_h ( \tilde{\boldsymbol m}_4 ) . \label{SSP-IMEX-RK2-NL-2-4}
\end{align}

Denote $\Phi$ as the exact solution to the LL equation~\eqref{equation-LL-mod}, with the regularity
\begin{equation}
	\Phi \in {\mathcal R} =  C^3 ([0,T]; [C^0(\bar{\Omega})]^3) \cap C^2([0,T]; [C^2(\bar{\Omega})]^3) \cap L^{\infty}([0,T]; [C^4(\bar{\Omega})]^3)  . \label{regularity-1}
\end{equation}
The main theoretical result is stated in the following theorem.

\begin{thm} \label{thm: convergence}
	Assume that the exact solution $\Phi$ of \eqref{equation-LL-mod} has the regularity ${\mathcal R}$. Denote ${\boldsymbol m}^n$ ($n\ge0$) as the numerical solution obtained from~\eqref{SSP-IMEX-RK2-NL}, or equivalently \eqref{SSP-IMEX-RK2-NL-2-1}-\eqref{SSP-IMEX-RK2-NL-2-4}, with the initial  error satisfying $\|\mathcal{P}_h \Phi (\cdot,t_0) - \boldsymbol m_0 \|_2 +\|\nabla_h ( \mathcal{P}_{h} \Phi (\cdot,t_0) - \boldsymbol m_0 ) \|_2 = \mathcal{O} (h^2)$. In addition, a linear refinement assumption is made for the time step size: $C_1 h \le k \le C_2 h$.  Then the following convergence result holds for $1 \le n\le \left\lfloor\frac{T}{k}\right\rfloor$ as $k, h \to0^+$:	
	\begin{align} \label{convergence-0}
		\| \Phi (\cdot,t_n) - \boldsymbol m^n \|_2
		&\leq \mathcal{C} ( k^2+h^2) ,
	\end{align}	
	in which the constant $\mathcal{C}>0$ is independent of $k$ and $h$.
\end{thm}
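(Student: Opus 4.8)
The plan is to follow the two-tier rough-then-refined error analysis that has become standard for nonlinear IMEX-RK schemes, exploiting the strengthened diffusion structure exposed by the linear stability estimate \eqref{stability-6}. I would begin with a consistency analysis: substituting the projected exact solution $\mathcal{P}_h \Phi$, evaluated at the appropriate stage times, into the four-stage system written in the form \eqref{SSP-IMEX-RK2-NL-2-1}--\eqref{SSP-IMEX-RK2-NL-2-4}, and invoking Taylor expansion in time together with the second-order order conditions of the SSP-IMEX-RK2 tableau, each stage is shown to satisfy the scheme up to a local truncation error of size $\mathcal{O}(k^2 + h^2)$ in the $\|\cdot\|_2$ norm. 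The temporal smoothness of the regularity class $\mathcal{R}$ in \eqref{regularity-1} supplies the bounds on high-order time derivatives, while the second-order accuracy of $\Delta_h$, $\nabla_h$ and $\mathcal{A}_h$ controls the spatial truncation of the discrete nonlinearity $N_h$ in \eqref{NL-discrete-1}.

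Next I would introduce the stage errors $\boldsymbol e_i = \mathcal{P}_h \Phi_i - \tilde{\boldsymbol m}_i$ and the step error $\boldsymbol e^n = \mathcal{P}_h \Phi(\cdot,t_n) - \boldsymbol m^n$, and subtract the numerical system from the consistency system. The resulting error equations retain the linear diffusion skeleton of \eqref{SSP-IMEX-RK2-Linear-2-1}--\eqref{SSP-IMEX-RK2-Linear-2-3}, now carrying the nonlinear differences $N_h(\mathcal{P}_h \Phi_i) - N_h(\tilde{\boldsymbol m}_i)$ and the truncation terms on the right-hand side. Testing by $2\boldsymbol e_2$, $2\boldsymbol e_3$, $2\boldsymbol e_4$ and summing exactly as in \eqref{stability-1}--\eqref{stability-6} reproduces the positive diffusion reserves $\frac{\beta}{36}k\|\nabla_h \boldsymbol e_2\|_2^2 + \frac{\beta}{6}k\|\nabla_h \boldsymbol e_3\|_2^2 + \frac{\beta}{12}k\|\nabla_h \boldsymbol e_4\|_2^2$ on the left; these reserves are what must absorb the nonlinear contributions.

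The decisive step is the nonlinear term. Because $N_h$ contains the cubic gradient term $\beta|\mathcal{A}_h \nabla_h \boldsymbol m|^2 \boldsymbol m$, its error cannot be dominated by $\|\boldsymbol e_i\|_2$ alone: expanding $|\mathcal{A}_h\nabla_h \mathcal{P}_h\Phi_i|^2 \mathcal{P}_h\Phi_i - |\mathcal{A}_h\nabla_h \tilde{\boldsymbol m}_i|^2 \tilde{\boldsymbol m}_i$ yields one piece proportional to $\boldsymbol e_i$ weighted by $|\nabla_h \boldsymbol m|^2$, and another proportional to $\nabla_h \boldsymbol e_i$ weighted by $\nabla_h \mathcal{P}_h\Phi_i + \nabla_h \boldsymbol m$. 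The first piece, estimated via H\"older with exponents $4,4,2$ and the discrete Sobolev inequality \eqref{Sobolev-1}, requires $\||\nabla_h \boldsymbol m|^2\|_4 = \|\nabla_h \boldsymbol m\|_8^2$, hence a uniform $W_h^{1,8}$ bound on the numerical solution; the second, paired against the gradient of the test function, needs $\|\boldsymbol m\|_\infty$ and the $\|\nabla_h \boldsymbol e_i\|_2$ reserve. To secure these norms, I would first run the rough estimate: under the a priori assumption $\|\boldsymbol e^n\|_2 + \|\nabla_h \boldsymbol e^n\|_2 \le k^{3/2}$, the inverse inequalities \eqref{inverse-1}--\eqref{inverse-2} together with the refinement $k \sim h$ yield $\|\boldsymbol e^n\|_\infty = \mathcal{O}(h^{-1/2}k^{3/2}) = \mathcal{O}(h)$ and $\|\nabla_h \boldsymbol e^n\|_8 = \mathcal{O}(h^{-9/8}k^{3/2}) = \mathcal{O}(h^{3/8})$, so that $\|\boldsymbol m^n\|_\infty$ and $\|\nabla_h \boldsymbol m^n\|_8$ remain controlled by the exact-solution norms; a coarse energy inequality then recovers the assumed bound and closes this preliminary induction.

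Finally, with uniform $\ell^\infty$ and $W_h^{1,8}$ control established, the refined estimate closes the proof by induction on $n$. Each nonlinear piece is split as above, the $\nabla_h \boldsymbol e_i$ portions being absorbed into the diffusion reserves of \eqref{stability-6} --- here the strengthened SSP coefficients, absent for the Crank--Nicolson-type standard IMEX-RK2 \eqref{IMEX-RK2}, are exactly what make absorption possible --- while the cross-product term $-\alpha \boldsymbol m \times (\boldsymbol m \times \boldsymbol f)$ is controlled by its $\ell^2$-Lipschitz dependence on $\boldsymbol m$. Bounding the remainder by $\mathcal{C}(\|\boldsymbol e^n\|_2^2 + (k^2+h^2)^2)$ and applying a discrete Gronwall inequality to the summed energy estimate gives $\|\boldsymbol e^{n+1}\|_2^2 \le \mathcal{C}(k^2+h^2)^2$, completing the induction and, through the triangle inequality with the $\mathcal{O}(h^2)$ projection error, the optimal estimate \eqref{convergence-0}. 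I expect the main obstacle to be precisely this cubic gradient nonlinearity: its error control forces the $W_h^{1,8}$ bound that is unavailable a priori and must be bootstrapped through the rough estimate, and the success of the whole argument rests on the enhanced diffusion strength of the SSP variant being sufficient to dominate the gradient-error terms.
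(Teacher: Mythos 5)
Your overall architecture matches the paper's: a rough estimate bootstrapping $\ell^\infty$ and $W_h^{1,8}$ control of the stage solutions, a refined estimate whose gradient terms are absorbed into the strengthened SSP diffusion reserves of \eqref{stability-6}, the $\ell^{8/5}$--$\ell^{8/3}$ treatment of the cubic gradient nonlinearity, and a discrete Gronwall closure. However, your consistency step contains a genuine gap. You claim that substituting $\mathcal{P}_h\Phi$ \emph{evaluated at the stage times} into \eqref{SSP-IMEX-RK2-NL-2-1}--\eqref{SSP-IMEX-RK2-NL-2-4} produces $\mathcal{O}(k^2+h^2)$ residuals at \emph{every} stage. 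This is false, because the scheme has low stage order: the second stage ${\boldsymbol{\tilde{m}}_{2}}=\boldsymbol{{\tilde{m}}}_{1} +\frac{k}{4} L_h ({\boldsymbol{\tilde{m}}_{2}})$ contains no nonlinear term at all, so its residual relative to $\Phi(t_n+ck)$ is $\frac{k}{4}N(\Phi(t_n))+\mathcal{O}(k^2)=\mathcal{O}(k)$ for every choice of $c$. With $\mathcal{O}(k)$ forcing in the stage error equations, the nonlinear differences entering the final update are themselves $\mathcal{O}(k)$, and a naive accumulation yields only first-order convergence; the second order of the composite map arises from cancellation \emph{across} stages, which your argument never tracks. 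The paper circumvents this by defining the auxiliary profiles $\tilde{\Phi}^{n,(j)}$ through the same implicit stage recursion \eqref{consistency-1-1}--\eqref{consistency-1-3} applied to $\Phi^n$ (not by evaluating $\Phi$ at stage times), so that the stage error equations \eqref{consistency-5-1}--\eqref{consistency-5-3} are residual-free and the entire truncation error $\tau_0^n$ appears only in the final relation \eqref{consistency-3-4}, where the Taylor expansion of the composite map delivers $\mathcal{O}(k^2+h^2)$.

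A second gap lies in your induction hypothesis $\|\boldsymbol e^n\|_2 + \|\nabla_h \boldsymbol e^n\|_2 \le k^{3/2}$: the refined estimate at the end of a step yields only $\|\boldsymbol e^{n+1}\|_2 \le \mathcal{C}(k^2+h^2)$, and there is no mechanism to recover the gradient half of your hypothesis, since the final update \eqref{SSP-IMEX-RK2-NL-2-4} is purely explicit in $N_h$ and provides no smoothing, while the inverse inequality only gives $\|\nabla_h \boldsymbol e^{n+1}\|_2 \lesssim h^{-1}(k^2+h^2)=\mathcal{O}(h)\gg k^{3/2}$, after which $\|\nabla_h \boldsymbol e^{n+1}\|_8\lesssim h^{-9/8}\cdot h$ diverges. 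The paper's device is to assume only $\|\boldsymbol e^n\|_2 \le k^{15/8}+h^{15/8}$ (see \eqref{bound-2}) and to \emph{generate} the needed gradient bounds at the stages — which is all that is required, since $N_h$ is never evaluated at $\boldsymbol m_n$ itself — through the parabolic smoothing factor $k^{-1/2}$ of the implicit diffusion stages, as in \eqref{convergence-1-2}, \eqref{convergence-2-7}, \eqref{convergence-3-7}; recovery of the hypothesis then needs only the $\ell^2$ bound, which the Gronwall step supplies. Repairing your proof requires adopting both devices: the scheme-generated consistency profiles and the $\ell^2$-only a-priori assumption upgraded by stage smoothing.
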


\begin{proof}{\bf (Proof of Theorem \ref{thm: convergence}.)}\
Around the boundary section $z=0$, we set $\hat{z}_0 = - \frac12 h$, $\hat{z}_1 = \frac12 h$, and we can extend the profile $\Phi$ to the numerical ``ghost" points, according to the extrapolation formula~\eqref{BC-1}:
\begin{equation}
	\Phi_{i,j,0}= \Phi_{i,j,1} , \quad
	\Phi_{i,j,N+1} = \Phi_{i,j,N} ,  \label{exact-3}
\end{equation}
and the extrapolation for other boundaries can be formulated in the same manner. The proof of such an extrapolation yields a higher order $\mathcal{O}(h^5)$ approximation, instead of the standard $\mathcal{O}(h^3)$ accuracy. See the related derivation in~\cite{cai2023}, as well as the related consistency analysis works~\cite{chen2021convergence, STWW2003, Wang2000, Wang2004}.

Given the exact solution $\Phi$, we denote $\Phi^n = \Phi (\cdot , t^n)$. To facilitate the Runge-Kutta analysis, three more intermediate approximate solutions are constructed at each time step, following the same algorithm as in~\eqref{SSP-IMEX-RK2-NL}:
\begin{align}
	\tilde{\Phi}^{n,(2)} = & \Phi^n + \frac{\beta k}{4} \Delta_h \tilde{\Phi}^{n,(2)} ,   \label{consistency-1-1}
	\\
	\tilde{\Phi}^{n,(3)} = & \tilde{\Phi}^n + \frac{k}{2} N_h ( \tilde{\Phi}^{n,(2)} )
	+ \frac{\beta k}{4} \Delta_h \tilde{\Phi}^{n,(3)}  ,   \label{consistency-1-2}
	\\
	\tilde{\Phi}^{n,(4)} = & \tilde{\Phi}^n + \frac{k}{2} (  N_h ( \tilde{\Phi}^{n,(2)} )
	+ N_h ( \tilde{\Phi}^{n,(3)} ) )  \nonumber
	\\
	&
	+ \frac{\beta k}{3}  \Delta_h ( \tilde{\Phi}^{n,(2)}  + \tilde{\Phi}^{n,(3)}
	+  \tilde{\Phi}^{n,(4)} ) ,   \label{consistency-1-3}
\end{align}
in which the homogeneous discrete Neumann boundary condition (similar to~\eqref{BC-1}, \eqref{exact-3}) is imposed for $\tilde{\Phi}^{n,(j)}$, $j = 2, 3, 4$. Subsequently, a careful Taylor expansion (associated with the SSP-IMEX-RK schemes) reveals the following consistency estimate, for the exact solution at the next time step:
\begin{align}
	\Phi^{n+1} = & \tilde{\Phi}^n + \frac{k}{3} (  N_h ( \tilde{\Phi}^{n,(2)} )
	+ N_h ( \tilde{\Phi}^{n,(3)} ) + N_h ( \tilde{\Phi}^{n,(4)} ) )  \nonumber
	\\
	&
	+ \frac{\beta k}{3}  \Delta_h ( \tilde{\Phi}^{n,(2)}  + \tilde{\Phi}^{n,(3)}
	+  \tilde{\Phi}^{n,(4)} )  + k \tau_0^n ,  \quad  \| \tau_0^n \|_2 \le {\mathcal C} ( k^2 + h^2 ) .    \label{consistency-2}
\end{align}
Of course, with a similar transformation as in~\eqref{SSP-IMEX-RK2-NL-2-1}-\eqref{SSP-IMEX-RK2-NL-2-4}, the exact solution $\Phi^n$, $\Phi^{n+1}$ and the constructed profiles $\tilde{\Phi}^{n,(j)}$ ($j = 2, 3, 4$) satisfy the following numerical system:
\begin{align}
	&
	\frac{\tilde{\Phi}^{n,(2)} - \Phi^n}{k} = \frac{\beta}{4} \Delta_h \tilde{\Phi}^{n,(2)} ,   \label{consistency-3-1}
	\\
	&
	\frac{\tilde{\Phi}^{n,(3)} - \tilde{\Phi}^{n,(2)}}{k} = \frac12 N_h ( \tilde{\Phi}^{n,(2)} )
	+ \frac{\beta}{4} \Delta_h ( \tilde{\Phi}^{n,(3)} - \tilde{\Phi}^{n,(2)} ) ,   \label{consistency-3-2}
	\\
	&
	\frac{\tilde{\Phi}^{n,(4)} - \tilde{\Phi}^{n,(3)}}{k} = \frac12 N_h ( \tilde{\Phi}^{n,(3)} )
	+ \beta  \Delta_h ( \frac13 \tilde{\Phi}^{n,(2)}  + \frac{1}{12} \tilde{\Phi}^{n,(3)}
	+  \frac13 \tilde{\Phi}^{n,(4)} ) ,   \label{consistency-3-3}
	\\
	&
	\frac{\Phi^{n+1} - \tilde{\Phi}^{n,(4)}}{k} = - \frac16  (  N_h ( \tilde{\Phi}^{n,(2)} )
	+ N_h ( \tilde{\Phi}^{n,(3)} ) ) + \frac13 N_h ( \tilde{\Phi}^{n,(4)} )  + \tau_0^n .   \label{consistency-3-4}
\end{align}
It is clear that the constructed profiles $\tilde{\Phi}^{n,(j)}$ ($j = 2, 3, 4$) only depend on the exact solution $\Phi^n$, and the consistency estimate indicates that
\begin{equation}
	\| \tilde{\Phi}^{n,(j)} \|_\infty \le \frac98 , \quad \| \nabla_h \tilde{\Phi}^{n,(j)} \|_\infty \le {\mathcal C}^* ,
	\quad  j = 2, 3, 4 .
	\label{consistency-4}
\end{equation}

The following numerical error functions are defined:
\begin{equation}
	\begin{aligned}
		&
		\boldsymbol e^k = \Phi^k - \boldsymbol m_k ,  \, \, \, k=n, n+1 ,
		\\
		&
		\tilde{\boldsymbol e}^{n,(j)} = \tilde{\Phi}^{n,(j)} - \tilde{\boldsymbol m}_j , \, \, \, j =2 , 3, 4 ,
	\end{aligned}
	\label{error function-1}
\end{equation}
at a point-wise level. In addition, the following nonlinear error terms are introduced:
\begin{equation}
	{\mathcal NLE}^{n,(j)} = N_h ( \tilde{\Phi}^{n,(j)} )  -  N_h ( \tilde{\boldsymbol m}_j ) ,  \quad  j =2 , 3, 4 .
	\label{NL error-def}
\end{equation}
Therefore, subtracting the numerical scheme~\eqref{SSP-IMEX-RK2-NL-2-1}-\eqref{SSP-IMEX-RK2-NL-2-4} from the consistency estimate~\eqref{consistency-3-1}-\eqref{consistency-3-4} yields
\begin{align}
	&
	\frac{\tilde{\boldsymbol e}^{n,(2)} - \boldsymbol e^n}{k} = \frac{\beta}{4} \Delta_h \tilde{\boldsymbol e}^{n,(2)} ,   \label{consistency-5-1}
	\\
	&
	\frac{\tilde{\boldsymbol e}^{n,(3)} - \tilde{\boldsymbol e}^{n,(2)}}{k} = \frac12 {\mathcal NLE}^{n,(2)}
	+ \frac{\beta}{4} \Delta_h ( \tilde{\boldsymbol e}^{n,(3)} - \tilde{\boldsymbol e}^{n,(2)} ) ,   \label{consistency-5-2}
	\\
	&
	\frac{\tilde{\boldsymbol e}^{n,(4)} - \tilde{\boldsymbol e}^{n,(3)}}{k} = \frac12 {\mathcal NLE}^{n,(3)}
	+ \beta  \Delta_h ( \frac13 \tilde{\boldsymbol e}^{n,(2)}  + \frac{1}{12} \tilde{\boldsymbol e}^{n,(3)}
	+  \frac13 \tilde{\boldsymbol e}^{n,(4)} ) ,   \label{consistency-5-3}
	\\
	&
	\frac{\boldsymbol e^{n+1} - \tilde{\boldsymbol e}^{n,(4)}}{k} = - \frac16  (  {\mathcal NLE}^{n,(2)} + {\mathcal NLE}^{n,(3)}  )
	+ \frac13 {\mathcal NLE}^{n,(4)}  + \tau_0^n .   \label{consistency-5-4}
\end{align}
In addition, the discrete homogeneous Neumann boundary condition~\eqref{BC-1} is satisfied for both $\boldsymbol e^{n+1}$, $\boldsymbol e^n$, as well as the intermediate error functions $\tilde{\boldsymbol e}^{n, (j)}$, $j= 2, 3, 4$.

To facilitate the convergence proof, the following functional bound of the nonlinear error terms is needed.
\vskip2mm
\begin{lem}  \label{lem: NL error}
	Under the regularity estimate~\eqref{consistency-4} for the constructed profiles, and the following bound for the numerical solution in the IMEX-RK stages
	\begin{equation}
		\| \tilde{\boldsymbol m}_j \|_\infty  \le \frac54, \quad
		\| \nabla_h \tilde{\boldsymbol m}_j \|_8 \le \tilde{\mathcal C} := {\mathcal C}^* +1 ,
		\quad  j = 2, 3, 4 .
		\label{bound-m-1}
	\end{equation}
	we have an $\| \cdot \|_\frac85$ estimate for the nonlinear error terms:
	\begin{equation}
		\| {\mathcal NLE}^{n, (j)} \|_\frac85  \le \tilde{M} ( \|  \tilde{\boldsymbol e}^{n,(j)}  \|_2
		+ \| \nabla_h  \tilde{\boldsymbol e}^{n,(j)}  \|_2 ) ,   \quad  j = 2, 3, 4 ,  \label{NL error-0}
	\end{equation}
	in which $\tilde{M}$ only depends on $\alpha$, $\beta$, ${\mathcal C}^*$, $\tilde{\mathcal C}$, and the external force term $\boldsymbol f$.
\end{lem}

\begin{proof}{\bf (Proof of Lemma \ref{lem: NL error}.)}\
	A careful expansion of the nonlinear terms $N_h ( \tilde{\Phi}^{n,(j)}) $ and $N_h ( \boldsymbol m_j )$ gives
	\begin{align}
		{\mathcal NLE}^{n, (j)} = & N_h ( \tilde{\Phi}^{n,(j)} ) - N_h ( \tilde{\boldsymbol m}_j )  \nonumber
		\\
		= & \beta | {\mathcal A}_h \nabla_h \tilde{\Phi}^{n, (j)} |^2
		\tilde{\boldsymbol e}^{n, (j)} + \beta \Big(
		{\mathcal A}_h \nabla_h  ( \tilde{\Phi}^{n, (j)} + \tilde{\boldsymbol m}_j )
		\cdot {\mathcal A}_h \nabla_h  \tilde{\boldsymbol e}^{n, (j)}  \Big) \tilde{\boldsymbol m}_j  \nonumber
		\\
		&
		- \alpha \tilde{\boldsymbol m}_j \times ( \tilde{\boldsymbol e}^{n, (j)} \times \boldsymbol f )
		- \alpha \tilde{\boldsymbol e}^{n, (j)} \times ( \tilde{\Phi}^{n, (j)} \times \boldsymbol f )  .   \label{NL error-1}
	\end{align}
	In turn, an application of discrete H\"older inequality leads to
	\begin{align}
		&
		\Big\| \beta | {\mathcal A}_h \nabla_h \tilde{\Phi}^{n, (j)} |^2  \tilde{\boldsymbol e}^{n, (j)} \Big\|_\frac85
		\le \beta  \|   \nabla_h \tilde{\Phi}^{n, (j)} \|_\infty^2 \cdot  \| \tilde{\boldsymbol e}^{n, (j)} \|_\frac85
		\nonumber
		\\
		& \qquad \qquad \qquad \qquad \qquad  \quad
		\le \beta ( {\mathcal C}^* )^2  \| \tilde{\boldsymbol e}^{n, (j)} \|_\frac85
		\le C \beta ( {\mathcal C}^* )^2  \| \tilde{\boldsymbol e}^{n, (j)} \|_2 ,  \label{NL error-2-1}
		\\
		&
		\Big\| \beta \Big(
		{\mathcal A}_h \nabla_h  ( \tilde{\Phi}^{n, (j)} + \tilde{\boldsymbol m}_j )
		\cdot {\mathcal A}_h \nabla_h  \tilde{\boldsymbol e}^{n, (j)}  \Big) \tilde{\boldsymbol m}_j  \Big\|_\frac85    \nonumber
		\\
		\le &
		\beta (  \| \nabla_h  \tilde{\Phi}^{n, (j)} \|_8 + \| \nabla_h \tilde{\boldsymbol m}_j \|_8 )
		\cdot \| \nabla_h  \tilde{\boldsymbol e}^{n, (j)} \|_2 \cdot \| \tilde{\boldsymbol m}_j \|_\infty  \nonumber
		\\
		\le &
		C \beta (  {\mathcal C}^* + \tilde{\mathcal C} )  \| \nabla_h  \tilde{\boldsymbol e}^{n, (j)} \|_2 ,
		\label{NL error-2-2}
		\\
		&
		\Big\|  \alpha \tilde{\boldsymbol m}_j \times ( \tilde{\boldsymbol e}^{n, (j)} \times \boldsymbol f )   \Big\|_\frac85
		\le \alpha  \| \tilde{\boldsymbol m}_j \|_\infty \cdot \| \boldsymbol f \|_\infty \cdot \| \tilde{\boldsymbol e}^{n, (j)} \|_\frac85  \nonumber
		\\
		& \qquad \qquad \qquad
		\le \frac{5 \alpha}{4} C_0 \| \tilde{\boldsymbol e}^{n, (j)} \|_\frac85
		\le C \alpha C_0  \| \tilde{\boldsymbol e}^{n, (j)} \|_2 ,  \label{NL error-2-3}
		\\
		&
		\Big\|  \alpha \tilde{\boldsymbol e}^{n, (j)} \times ( \tilde{\Phi}^{n, (j)} \times \boldsymbol f )  \Big\|_\frac85
		\le \alpha  \| \tilde{\Phi}^{n, (j)} \|_\infty \cdot \| \boldsymbol f \|_\infty \cdot \| \tilde{\boldsymbol e}^{n, (j)} \|_\frac85  \nonumber
		\\
		& \qquad \qquad \qquad
		\le \frac{9 \alpha}{8} C_0 \| \tilde{\boldsymbol e}^{n, (j)} \|_\frac85
		\le C \alpha C_0  \| \tilde{\boldsymbol e}^{n, (j)} \|_2 ,  \label{NL error-2-4}
	\end{align}
	in which the regularity estimate~\eqref{consistency-4} and the functional bound~\eqref{bound-m-1} have been repeatedly applied, along with the fact that $\| g \|_8 \le C \| g \|_\infty$, $\| g \|_\frac85 \le C \| g \|_2$ (for any grid function $g$). Also notice an $\| \cdot \|_\infty$ bound for the external force term: $\| \boldsymbol f \|_\infty \le C_0$. As a result, a substitution of~\eqref{NL error-2-1}-\eqref{NL error-2-4} into \eqref{NL error-1} yields the nonlinear error estimate \eqref{NL error-0}, by taking $\tilde{M} =  C (  \beta ( (  {\mathcal C}^* )^2 +  {\mathcal C}^* + \tilde{\mathcal C} ) + \alpha C_0 )$. This completes the proof of Lemma~\ref{lem: NL error}.
\end{proof}

Before proceeding into the formal error estimate, we make the following a-priori assumption for the numerical error function at the previous time step:
\begin{equation} \label{bound-2}
	\| \boldsymbol e^n \|_2 \le k^{\frac{15}{8}} + h^{\frac{15}{8}} .
\end{equation}
Such an assumption will be recovered by the convergence analysis at the next time step $t^{n+1}$.

\noindent
{\bf Error estimate at Stage 1}  \, \, Taking a discrete inner product with~\eqref{consistency-5-1} by $2 \tilde{\boldsymbol e}^{n, (2)}$ gives
\begin{equation}
	\| \tilde{\boldsymbol e}^{n, (2)} \|_2^2 - \| \boldsymbol e^n \|_2^2 + \| \tilde{\boldsymbol e}^{n, (2)} - \boldsymbol e^n \|_2^2
	+ \frac{\beta}{2} k \| \nabla_h \tilde{\boldsymbol e}^{n, (2)} \|_2^2 = 0 ,  \label{convergence-1-1}
\end{equation}
with an application of the summation-by-parts formula~\eqref{sum1}, due to the discrete homogeneous Neumann boundary condition for $\tilde{\boldsymbol e}^{n, (2)}$. As a result, the following estimates available:
\begin{equation}
	\begin{aligned}
		\| \tilde{\boldsymbol e}^{n, (2)} \|_2 \le &  \| \boldsymbol e^n \|_2 \le  k^{\frac{15}{8}} + h^{\frac{15}{8}}  ,
		\\
		\| \nabla_h \tilde{\boldsymbol e}^{n, (2)} \|_2 \le  & \sqrt{2} \beta^{-\frac12} k^{-\frac12}
		\| \boldsymbol e^n \|_2 \le \sqrt{2} \beta^{-\frac12} \Big( k^{\frac{11}{8}} + \frac{h^{\frac{15}{8}}}{k^\frac12} \Big)
		\\
		\le &
		C  ( k^{\frac{11}{8}} + h^{\frac{11}{8}} ) \le k^{\frac54} + h^{\frac54}  ,
	\end{aligned}
	\label{convergence-1-2}
\end{equation}
in which the linear refinement requirement, $C_1 h \le k \le C_2 h$, has been applied. Subsequently, the $\| \cdot \|_\infty$ and $\| \cdot \|_{W_h^{1,8}}$ bound for the numerical error function $\tilde{\boldsymbol e}^{n, (2)}$ could be derived, with the help of inverse inequalities~\eqref{inverse-1}, \eqref{inverse-2} (by taking $q=8$) in Lemma~\ref{ccclemC1}:
\begin{align}
	&
	\| \tilde{\boldsymbol e}^{n, (2)}  \|_{\infty} \le \gamma {h}^{-1/2 }
	( \| \tilde{\boldsymbol e}^{n, (2)}  \|_2 + \| \nabla_h \tilde{\boldsymbol e}^{n, (2)}  \|_2 )
	\le  \gamma \Big( \frac{k^{\frac54}}{h^\frac12} + h^{\frac34} \Big) \le \frac18 ,  	
	\label{convergence-1-3} 	
	\\
	& 	
	\|  \nabla_h \tilde{\boldsymbol e}^{n, (2)} \|_8 \leq \gamma {h}^{-\frac{9}{8}} \| \nabla_h \tilde{\boldsymbol e}^{n, (2)} \|_2
	\le  \gamma \Big( \frac{k^{\frac54}}{h^\frac98} + h^{\frac18} \Big) \le 1 , 	
	\label{convergence-1-4} 	
\end{align}
provided that $k$ and $h$ are sufficiently small, the linear refinement requirement, $C_1 h \le k \le C_2 h$. In turn, we obtain the following functional bound for the numerical solution $\tilde{\boldsymbol m}_2$ at the first Runge-Kutta stage, which will be useful in the later analysis:
\begin{align}
	&
	\| \tilde{\boldsymbol m}_2 \|_\infty \le \| \tilde{\Phi}^{n, (2)} \|_\infty
	+ \| \tilde{\boldsymbol e}^{n, (2)}  \|_{\infty} \le \frac98 + \frac18 = \frac54 ,  	
	\label{bound-stage 1-1} 	
	\\
	& 	
	\| \nabla_h \tilde{\boldsymbol m}_2 \|_8 \le \| \nabla_h \tilde{\Phi}^{n, (2)} \|_8
	+ \|  \nabla_h \tilde{\boldsymbol e}^{n, (2)} \|_8 \le {\mathcal C}^* + 1 = \tilde{\mathcal C} , 	
	\label{bound-stage 1-2} 	
\end{align}
with an application of triangular inequality, along with the regularity estimate~\eqref{consistency-4}.

\noindent
{\bf Error estimate at Stage 2}  \, \, Taking a discrete inner product with~\eqref{consistency-5-2} by $2 \tilde{\boldsymbol e}^{n, (3)}$ gives
\begin{equation}
	\begin{aligned}
		&
		\| \tilde{\boldsymbol e}^{n, (3)} \|_2^2 - \| \tilde{\boldsymbol e}^{n, (2)} \|_2^2 + \| \tilde{\boldsymbol e}^{n, (3)} - \tilde{\boldsymbol e}^{n, (2)} \|_2^2
		+ \frac{\beta}{2} k \| \nabla_h \tilde{\boldsymbol e}^{n, (3)} \|_2^2
		\\
		& \quad
		= \frac{\beta}{2} k \langle \nabla_h \tilde{\boldsymbol e}^{n, (2)} ,  \nabla_h \tilde{\boldsymbol e}^{n, (3)} \rangle
		+  k \langle {\mathcal NLE}^{n, (2)} ,  \tilde{\boldsymbol e}^{n, (3)} \rangle   ,
	\end{aligned}
	\label{convergence-2-1}
\end{equation}
with an application of the summation-by-parts formula~\eqref{sum1}. The first term on the right hand side could be controlled by the Cauchy inequality:
\begin{equation}
	\langle \nabla_h \tilde{\boldsymbol e}^{n, (2)} ,  \nabla_h \tilde{\boldsymbol e}^{n, (3)} \rangle
	\le  \frac12 ( \| \nabla_h \tilde{\boldsymbol e}^{n, (2)} \|_2^2 + \|  \nabla_h \tilde{\boldsymbol e}^{n, (3)} \|_2^2 ) .
	\label{convergence-2-2}
\end{equation}
Regarding the inner product term associated with the nonlinear error, we observe that an application of Lemma~\ref{lem: NL error} implies the following estimates:
\begin{equation}
	\begin{aligned}
		&
		\| {\mathcal NLE}^{n, (2)} \|_\frac85  \le \tilde{M} ( \|  \tilde{\boldsymbol e}^{n,(2)}  \|_2
		+ \| \nabla_h  \tilde{\boldsymbol e}^{n,(2)}  \|_2 ) ,   \quad  \mbox{so that}
		\\
		&
		\langle {\mathcal NLE}^{n, (2)} ,  \tilde{\boldsymbol e}^{n, (3)} \rangle
		\le   \| {\mathcal NLE}^{n, (2)} \|_\frac85 \cdot \| \tilde{\boldsymbol e}^{n, (3)}  \|_\frac83
		\\
		&
		\le \tilde{M} ( \|  \tilde{\boldsymbol e}^{n,(2)}  \|_2
		+ \| \nabla_h  \tilde{\boldsymbol e}^{n,(2)}  \|_2 )  \cdot \| \tilde{\boldsymbol e}^{n, (3)}  \|_\frac83
		\\
		&
		\le \frac{\tilde{M}}{2} \|  \tilde{\boldsymbol e}^{n,(2)}  \|_2^2
		+ \frac{\beta}{144} \| \nabla_h  \tilde{\boldsymbol e}^{n,(2)}  \|_2^2
		+ (  \frac{\tilde{M}}{2} + \frac{36 \tilde{M}^2}{\beta} ) \| \tilde{\boldsymbol e}^{n, (3)}  \|_\frac83^2 ,
	\end{aligned}
	\label{convergence-2-3}
\end{equation}
based on the regularity estimate~\eqref{consistency-4} and the functional bound~\eqref{bound-stage 1-1}-\eqref{bound-stage 1-2}. Moreover, an application of the discrete Sobolev inequality~\eqref{Sobolev-1} (in Lemma~\ref{lem: Sobolev-1}) indicates that
\begin{align}
	\| \tilde{\boldsymbol e}^{n, (3)}  \|_\frac83 \le C \| \tilde{\boldsymbol e}^{n, (3)} \|_4
	\le  & \mathcal{C}  ( \| \tilde{\boldsymbol e}^{n, (3)} \|_2  	
	+   	\| \tilde{\boldsymbol e}^{n, (3)} \|_2^\frac14 \cdot \| \nabla_h \tilde{\boldsymbol e}^{n, (3)} \|_2^\frac34 ) ,
	\quad \mbox{so that}  \nonumber 
	\\
	(  \frac{\tilde{M}}{2} + \frac{36 \tilde{M}^2}{\beta} ) \| \tilde{\boldsymbol e}^{n, (3)}  \|_\frac83^2
	\le & \mathcal{C}  ( \| \tilde{\boldsymbol e}^{n, (3)} \|_2^2   	
	+   	\| \tilde{\boldsymbol e}^{n, (3)} \|_2^\frac12 \cdot \| \nabla_h \tilde{\boldsymbol e}^{n, (3)} \|_2^\frac32 )
	\nonumber
	\\
	\le &
	\mathcal{C}  \| \tilde{\boldsymbol e}^{n, (3)} \|_2^2   	
	+   \frac{\beta}{24}	\| \nabla_h \tilde{\boldsymbol e}^{n, (3)} \|_2^2 ,   \label{convergence-2-4}   	
\end{align}
in which the Young's inequality has been applied in the last step. Therefore, a substitution of~\eqref{convergence-2-2}-\eqref{convergence-2-4} into \eqref{convergence-2-1} yields
\begin{equation}
	\begin{aligned}
		&
		\| \tilde{\boldsymbol e}^{n, (3)} \|_2^2 - \| \tilde{\boldsymbol e}^{n, (2)} \|_2^2 + \| \tilde{\boldsymbol e}^{n, (3)} - \tilde{\boldsymbol e}^{n, (2)} \|_2^2
		+ \frac{5 \beta}{24} k \| \nabla_h \tilde{\boldsymbol e}^{n, (3)} \|_2^2
		\\
		&
		- \frac{37 \beta}{144} k \| \nabla_h \tilde{\boldsymbol e}^{n, (2)} \|_2^2
		\le \frac{\tilde{M} k}{2} \|  \tilde{\boldsymbol e}^{n,(2)}  \|_2^2
		+  \mathcal{C}  k \| \tilde{\boldsymbol e}^{n, (3)} \|_2^2 .
	\end{aligned}
	\label{convergence-2-5}
\end{equation}
Furthermore, its combination with~\eqref{convergence-1-1} gives
\begin{equation}
	\begin{aligned}
		&
		\| \tilde{\boldsymbol e}^{n, (3)} \|_2^2 - \| \boldsymbol e^n \|_2^2 + \| \tilde{\boldsymbol e}^{n, (2)} - \boldsymbol e^n \|_2^2
		+ \| \tilde{\boldsymbol e}^{n, (3)} - \tilde{\boldsymbol e}^{n, (2)} \|_2^2
		\\
		&
		+ \frac{35 \beta}{144} k \| \nabla_h \tilde{\boldsymbol e}^{n, (2)} \|_2^2
		+ \frac{5 \beta}{24} k \| \nabla_h \tilde{\boldsymbol e}^{n, (3)} \|_2^2
		\le \frac{\tilde{M} k}{2} \|  \tilde{\boldsymbol e}^{n,(2)}  \|_2^2
		+  \mathcal{C}  k \| \tilde{\boldsymbol e}^{n, (3)} \|_2^2 .
	\end{aligned}
	\label{convergence-2-6}
\end{equation}

Consequently, with an application of the a-priori estimate~\eqref{convergence-1-2}, we obtain
\begin{equation}
	\begin{aligned}
		\| \tilde{\boldsymbol e}^{n, (3)} \|_2 \le &  \Big( \frac{1 +  \frac{\tilde{M} k}{2} }{1 - \mathcal{C}  k } \Big)^\frac12
		\| \boldsymbol e^n \|_2 \le 2 ( k^{\frac{15}{8}} + h^{\frac{15}{8}} ) ,
		\\
		\| \nabla_h \tilde{\boldsymbol e}^{n, (3)} \|_2 \le  & \sqrt{4.8} \beta^{-\frac12} k^{-\frac12}
		( 1 +  \frac{\tilde{M} k}{2} )^\frac12 \| \boldsymbol e^n \|_2 \le \sqrt{5} \beta^{-\frac12} \Big( k^{\frac{11}{8}} + \frac{h^{\frac{15}{8}}}{k^\frac12} \Big)
		\\
		\le &
		C  ( k^{\frac{11}{8}} + h^{\frac{11}{8}} ) \le k^{\frac54} + h^{\frac54}  ,
	\end{aligned}
	\label{convergence-2-7}
\end{equation}
under the linear refinement requirement, $C_1 h \le k \le C_2 h$. Similarly, the $\| \cdot \|_\infty$ and $\| \cdot \|_{W_h^{1,8}}$ bound for both the numerical error function $\tilde{\boldsymbol e}^{n, (3)}$ and the numerical solution $\tilde{\boldsymbol m}_3$ could be derived as follows
\begin{align}
	&
	\| \tilde{\boldsymbol e}^{n, (3)}  \|_{\infty} \le \gamma {h}^{-1/2 }
	( \| \tilde{\boldsymbol e}^{n, (3)}  \|_2 + \| \nabla_h \tilde{\boldsymbol e}^{n, (3)}  \|_2 )
	\le  \gamma \Big( \frac{k^{\frac54}}{h^\frac12} + h^{\frac34} \Big) \le \frac18 ,  	
	\label{convergence-2-8} 	
	\\
	& 	
	\|  \nabla_h \tilde{\boldsymbol e}^{n, (3)} \|_8 \le \gamma {h}^{-\frac{9}{8}} \| \nabla_h \tilde{\boldsymbol e}^{n, (3)} \|_2
	\le  \gamma \Big( \frac{k^{\frac54}}{h^\frac98} + h^{\frac18} \Big) \le 1 , 	
	\label{convergence-2-9} 	
	\\
	&
	\| \tilde{\boldsymbol m}_3 \|_\infty \le \| \tilde{\Phi}^{n, (3)} \|_\infty
	+ \| \tilde{\boldsymbol e}^{n, (3)}  \|_{\infty} \le \frac98 + \frac18 = \frac54 ,  	
	\label{bound-stage 2-1} 	
	\\
	& 	
	\| \nabla_h \tilde{\boldsymbol m}_3 \|_8 \le \| \nabla_h \tilde{\Phi}^{n, (3)} \|_8
	+ \|  \nabla_h \tilde{\boldsymbol e}^{n, (3)} \|_8 \le {\mathcal C}^* + 1 = \tilde{\mathcal C} . 	
	\label{bound-stage 2-2} 	
\end{align}

\noindent
{\bf Error estimate at Stage 3}  \, \, Taking a discrete inner product with~\eqref{consistency-5-3} by $2 \tilde{\boldsymbol e}^{n, (4)}$ gives
\begin{equation}
	\begin{aligned}
		&
		\| \tilde{\boldsymbol e}^{n, (4)} \|_2^2 - \| \tilde{\boldsymbol e}^{n, (3)} \|_2^2 + \| \tilde{\boldsymbol e}^{n, (4)} - \tilde{\boldsymbol e}^{n, (3)} \|_2^2
		+ \frac{2 \beta}{3} k \| \nabla_h \tilde{\boldsymbol e}^{n, (4)} \|_2^2
		\\
		&
		= - \frac{2 \beta}{3} k \langle \nabla_h \tilde{\boldsymbol e}^{n, (2)} ,  \nabla_h \tilde{\boldsymbol e}^{n, (4)} \rangle
		- \frac{\beta}{6} k \langle \nabla_h \tilde{\boldsymbol e}^{n, (3)} ,  \nabla_h \tilde{\boldsymbol e}^{n, (4)} \rangle
		+  k \langle {\mathcal NLE}^{n, (3)} ,  \tilde{\boldsymbol e}^{n, (4)} \rangle   ,
	\end{aligned}
	\label{convergence-3-1}
\end{equation}
with an application of the summation-by-parts formula~\eqref{sum1}. The first two terms on the right hand side could be analyzed in the same way as in~\eqref{stability-5-2}-\eqref{stability-5-3}
\begin{align}
	&
	- \frac23 \langle \nabla_h \tilde{\boldsymbol e}^{n, (2)} ,  \nabla_h \tilde{\boldsymbol e}^{n, (4)} \rangle
	\le  \frac29 \| \nabla_h \tilde{\boldsymbol e}^{n, (2)} \|_2^2 + \frac12 \|  \nabla_h \tilde{\boldsymbol e}^{n, (4)} \|_2^2 ,
	\label{convergence-3-2-1}
	\\
	&
	- \frac16 \langle \nabla_h \tilde{\boldsymbol e}^{n, (3)} ,  \nabla_h \tilde{\boldsymbol e}^{n, (4)} \rangle
	\le  \frac{1}{12} \| \nabla_h \tilde{\boldsymbol e}^{n, (3)} \|_2^2
	+ \frac{1}{12} \|  \nabla_h \tilde{\boldsymbol e}^{n, (4)} \|_2^2  ,
	\label{convergence-3-2-2}
\end{align}
Again, the nonlinear error term, as well as the corresponding inner product, could be analyzed in a similar fashion:
\begin{equation}
	\begin{aligned}
		&
		\| {\mathcal NLE}^{n, (3)} \|_\frac85  \le \tilde{M} ( \|  \tilde{\boldsymbol e}^{n,(3)}  \|_2
		+ \| \nabla_h  \tilde{\boldsymbol e}^{n,(3)}  \|_2 ) ,
		\\
		&
		\langle {\mathcal NLE}^{n, (3)} ,  \tilde{\boldsymbol e}^{n, (4)} \rangle
		\le   \| {\mathcal NLE}^{n, (3)} \|_\frac85 \cdot \| \tilde{\boldsymbol e}^{n, (4)}  \|_\frac83
		\\
		&
		\le \tilde{M} ( \|  \tilde{\boldsymbol e}^{n,(3)}  \|_2
		+ \| \nabla_h  \tilde{\boldsymbol e}^{n,(3)}  \|_2 )  \cdot \| \tilde{\boldsymbol e}^{n, (4)}  \|_\frac83
		\\
		&
		\le \frac{\tilde{M}}{2} \|  \tilde{\boldsymbol e}^{n,(3)}  \|_2^2
		+ \frac{\beta}{24} \| \nabla_h  \tilde{\boldsymbol e}^{n,(3)}  \|_2^2
		+ (  \frac{\tilde{M}}{2} + \frac{6 \tilde{M}^2}{\beta} ) \| \tilde{\boldsymbol e}^{n, (4)}  \|_\frac83^2 ,
		\\
		&
		\| \tilde{\boldsymbol e}^{n, (4)}  \|_\frac83 \le C \| \tilde{\boldsymbol e}^{n, (4)} \|_4
		\le  \mathcal{C}  ( \| \tilde{\boldsymbol e}^{n, (4)} \|_2  	
		+   	\| \tilde{\boldsymbol e}^{n, (4)} \|_2^\frac14 \cdot \| \nabla_h \tilde{\boldsymbol e}^{n, (4)} \|_2^\frac34 ) ,
		\\
		&
		(  \frac{\tilde{M}}{2} + \frac{6 \tilde{M}^2}{\beta} ) \| \tilde{\boldsymbol e}^{n, (4)}  \|_\frac83^2
		\le  \mathcal{C}  ( \| \tilde{\boldsymbol e}^{n, (4)} \|_2^2   	
		+   	\| \tilde{\boldsymbol e}^{n, (4)} \|_2^\frac12 \cdot \| \nabla_h \tilde{\boldsymbol e}^{n, (4)} \|_2^\frac32 )
		\\
		&  \qquad \qquad \qquad
		\le  \mathcal{C}  \| \tilde{\boldsymbol e}^{n, (4)} \|_2^2   	
		+   \frac{\beta}{48}	\| \nabla_h \tilde{\boldsymbol e}^{n, (4)} \|_2^2 ,     	
	\end{aligned}
	\label{convergence-3-4} 	
\end{equation}
based on the a-priori bound estimate~\eqref{bound-stage 2-1}-\eqref{bound-stage 2-2} in the second RK stage and the regularity estimate~\eqref{consistency-4}. Subsequently, a substitution of~\eqref{convergence-3-2-1}-\eqref{convergence-3-4} into \eqref{convergence-3-1} gives
\begin{equation}
	\begin{aligned}
		&
		\| \tilde{\boldsymbol e}^{n, (4)} \|_2^2 - \| \tilde{\boldsymbol e}^{n, (3)} \|_2^2 + \| \tilde{\boldsymbol e}^{n, (4)} - \tilde{\boldsymbol e}^{n, (3)} \|_2^2
		+ \frac{\beta}{16} k \| \nabla_h \tilde{\boldsymbol e}^{n, (4)} \|_2^2
		\\
		&
		- \frac{2 \beta}{9} k \| \nabla_h \tilde{\boldsymbol e}^{n, (2)} \|_2^2
		- \frac{\beta}{8} k \| \nabla_h \tilde{\boldsymbol e}^{n, (3)} \|_2^2
		\le \frac{\tilde{M} k}{2} \|  \tilde{\boldsymbol e}^{n,(3)}  \|_2^2
		+  \mathcal{C}  k \| \tilde{\boldsymbol e}^{n, (3)} \|_2^2 ,
	\end{aligned}
	\label{convergence-3-5}
\end{equation}
and its combination with~\eqref{convergence-2-6} yields
\begin{equation}
	\begin{aligned}
		&
		\| \tilde{\boldsymbol e}^{n, (4)} \|_2^2 - \| \boldsymbol e^n \|_2^2 + \| \tilde{\boldsymbol e}^{n, (2)} - \boldsymbol e^n \|_2^2
		+ \| \tilde{\boldsymbol e}^{n, (3)} - \tilde{\boldsymbol e}^{n, (2)} \|_2^2
		+ \| \tilde{\boldsymbol e}^{n, (4)} - \tilde{\boldsymbol e}^{n, (3)} \|_2^2
		\\
		&
		+ \frac{\beta}{48} k \| \nabla_h \tilde{\boldsymbol e}^{n, (2)} \|_2^2
		+ \frac{\beta}{12} k \| \nabla_h \tilde{\boldsymbol e}^{n, (3)} \|_2^2
		+ \frac{\beta}{16} k \| \nabla_h \tilde{\boldsymbol e}^{n, (4)} \|_2^2
		\\
		&
		\le \frac{\tilde{M} k}{2} ( \|  \tilde{\boldsymbol e}^{n,(2)}  \|_2^2  + \|  \tilde{\boldsymbol e}^{n,(3)}  \|_2^2  )
		+  \mathcal{C}  k ( \| \tilde{\boldsymbol e}^{n, (3)} \|_2^2 + \| \tilde{\boldsymbol e}^{n, (4)} \|_2^2 ) .
	\end{aligned}
	\label{convergence-3-6}
\end{equation} 	
Similarly, with the help of the a-priori estimates~\eqref{convergence-1-2}, \eqref{convergence-2-7}, in the first and second RK stages, respectively, the following rough error estimates could be derived:
\begin{equation}
	\begin{aligned}
		\| \tilde{\boldsymbol e}^{n, (4)} \|_2 \le &  \Big( \frac{1 +  {\mathcal C} k}{1 - \mathcal{C}  k } \Big)^\frac12
		\| \boldsymbol e^n \|_2 \le 2 ( k^{\frac{15}{8}} + h^{\frac{15}{8}} ) ,
		\\
		\| \nabla_h \tilde{\boldsymbol e}^{n, (4)} \|_2 \le  & 4 \beta^{-\frac12} k^{-\frac12}
		( 1 +  {\mathcal C} k )^\frac12 \| \boldsymbol e^n \|_2
		\le 5 \beta^{-\frac12} \Big( k^{\frac{11}{8}} + \frac{h^{\frac{15}{8}}}{k^\frac12} \Big)
		\\
		\le &
		C  ( k^{\frac{11}{8}} + h^{\frac{11}{8}} ) \le k^{\frac54} + h^{\frac54}  ,
	\end{aligned}
	\label{convergence-3-7}
\end{equation}
and the $\| \cdot \|_\infty$ and $\| \cdot \|_{W_h^{1,8}}$ bound for both the numerical error function $\tilde{\boldsymbol e}^{n, (4)}$ and the numerical solution $\tilde{\boldsymbol m}_4$ also becomes available:
\begin{align}
	&
	\| \tilde{\boldsymbol e}^{n, (4)}  \|_{\infty} \le \gamma {h}^{-1/2 }
	( \| \tilde{\boldsymbol e}^{n, (4)}  \|_2 + \| \nabla_h \tilde{\boldsymbol e}^{n, (4)}  \|_2 )
	\le  \gamma \Big( \frac{k^{\frac54}}{h^\frac12} + h^{\frac34} \Big) \le \frac18 ,  	
	\label{convergence-3-8} 	
	\\
	& 	
	\|  \nabla_h \tilde{\boldsymbol e}^{n, (4)} \|_8 \le \gamma {h}^{-\frac{9}{8}} \| \nabla_h \tilde{\boldsymbol e}^{n, (4)} \|_2
	\le  \gamma \Big( \frac{k^{\frac54}}{h^\frac98} + h^{\frac18} \Big) \le 1 , 	
	\label{convergence-3-9} 	
	\\
	&
	\| \tilde{\boldsymbol m}_4 \|_\infty \le \| \tilde{\Phi}^{n, (4)} \|_\infty
	+ \| \tilde{\boldsymbol e}^{n, (4)}  \|_{\infty} \le \frac98 + \frac18 = \frac54 ,  	
	\label{bound-stage 3-1} 	
	\\
	& 	
	\| \nabla_h \tilde{\boldsymbol m}_4 \|_8 \le \| \nabla_h \tilde{\Phi}^{n, (4)} \|_8
	+ \|  \nabla_h \tilde{\boldsymbol e}^{n, (4)} \|_8 \le {\mathcal C}^* + 1 = \tilde{\mathcal C} . 	
	\label{bound-stage 3-2} 	
\end{align}

\noindent
{\bf Error estimate at Stage 4}  \, \, Taking a discrete inner product with~\eqref{consistency-5-4} by $2 \\e^{n+1}$ gives
\begin{equation}
	\begin{aligned}
		&
		\| \boldsymbol e^{n+1} \|_2^2 - \| \tilde{\boldsymbol e}^{n, (4)} \|_2^2 + \| \boldsymbol e^{n+1} - \tilde{\boldsymbol e}^{n, (4)} \|_2^2
		- k \langle \tau_0^n ,  \boldsymbol e^{n+1} \rangle
		\\
		&
		=  - \frac13 k \langle {\mathcal NLE}^{n, (2)} ,  \boldsymbol e^{n+1} \rangle
		- \frac13 k \langle {\mathcal NLE}^{n, (3)} ,  \boldsymbol e^{n+1} \rangle
		+ \frac23 k \langle {\mathcal NLE}^{n, (4)} ,  \boldsymbol e^{n+1} \rangle  .
	\end{aligned}
	\label{convergence-4-1}
\end{equation}
The local truncation error inner product term could be controlled in a straightforward way:
\begin{equation}
	\langle \tau_0^n ,  \boldsymbol e^{n+1} \rangle \le \frac12 (  \| \tau_0^n \|_2^2 + \|  \boldsymbol e^{n+1}  \|_2^2 ) .
	\label{convergence-4-2}
\end{equation}
The nonlinear error terms could be analyzed in a similar manner:
\begin{equation}
	\begin{aligned}
		&
		\| {\mathcal NLE}^{n, (4)} \|_\frac85  \le \tilde{M} ( \|  \tilde{\boldsymbol e}^{n,(4)}  \|_2
		+ \| \nabla_h  \tilde{\boldsymbol e}^{n,(4)}  \|_2 ) ,
		\\
		&
		- \frac13 \langle {\mathcal NLE}^{n, (2)} ,  \boldsymbol e^{n+1} \rangle
		\le   \frac13 \| {\mathcal NLE}^{n, (2)} \|_\frac85 \cdot \| \boldsymbol e^{n+1}  \|_\frac83
		\\
		&
		\le \frac13 \tilde{M} ( \|  \tilde{\boldsymbol e}^{n, (2)}  \|_2
		+ \| \nabla_h  \tilde{\boldsymbol e}^{n, (2)}  \|_2 )  \cdot \| \boldsymbol e^{n+1}  \|_\frac83
		\\
		&
		\le \frac{\tilde{M}}{6} \|  \tilde{\boldsymbol e}^{n,(2)}  \|_2^2
		+ \frac{\beta}{144} \| \nabla_h  \tilde{\boldsymbol e}^{n,(2)}  \|_2^2
		+ (  \frac{\tilde{M}}{6} + \frac{4 \tilde{M}^2}{\beta} ) \| \boldsymbol e^{n+1}  \|_\frac83^2 ,
		\\
		&
		- \frac13 \langle {\mathcal NLE}^{n, (3)} ,  \boldsymbol e^{n+1} \rangle
		\le   \frac13 \| {\mathcal NLE}^{n, (3)} \|_\frac85 \cdot \| \boldsymbol e^{n+1}  \|_\frac83
		\\
		&
		\le \frac13 \tilde{M} ( \|  \tilde{\boldsymbol e}^{n, (3)}  \|_2
		+ \| \nabla_h  \tilde{\boldsymbol e}^{n ,(3)}  \|_2 )  \cdot \| \boldsymbol e^{n+1}  \|_\frac83
		\\
		&
		\le \frac{\tilde{M}}{6} \|  \tilde{\boldsymbol e}^{n, (3)}  \|_2^2
		+ \frac{\beta}{36} \| \nabla_h  \tilde{\boldsymbol e}^{n, (3)}  \|_2^2
		+ (  \frac{\tilde{M}}{6} + \frac{\tilde{M}^2}{\beta} ) \| \boldsymbol e^{n+1}  \|_\frac83^2 ,
		\\
		&
		\frac23 \langle {\mathcal NLE}^{n, (4)} ,  \boldsymbol e^{n+1} \rangle
		\le   \frac23 \| {\mathcal NLE}^{n, (4)} \|_\frac85 \cdot \| \boldsymbol e^{n+1}  \|_\frac83
		\\
		&
		\le \frac23 \tilde{M} ( \|  \tilde{\boldsymbol e}^{n, (4)}  \|_2
		+ \| \nabla_h  \tilde{\boldsymbol e}^{n, (4)}  \|_2 )  \cdot \| \boldsymbol e^{n+1}  \|_\frac83
		\\
		&
		\le \frac{\tilde{M}}{3} \|  \tilde{\boldsymbol e}^{n, (4)}  \|_2^2
		+ \frac{\beta}{48} \| \nabla_h  \tilde{\boldsymbol e}^{n,(4)}  \|_2^2
		+ (  \frac{\tilde{M}}{3} + \frac{16 \tilde{M}^2}{3 \beta} ) \| \boldsymbol e^{n+1}  \|_\frac83^2 .
	\end{aligned}
	\label{convergence-4-4} 	
\end{equation}
In turn, a substitution of~\eqref{convergence-4-2}, \eqref{convergence-4-4} into \eqref{convergence-4-1} leads to
\begin{equation}
	\begin{aligned}
		&
		\| \boldsymbol e^{n+1} \|_2^2 - \| \tilde{\boldsymbol e}^{n, (4)} \|_2^2 + \| \boldsymbol e^{n+1} - \tilde{\boldsymbol e}^{n, (4)} \|_2^2
		- \frac{\beta}{48} k \| \nabla_h \tilde{\boldsymbol e}^{n, (4)} \|_2^2
		\\
		&
		- \frac{\beta}{144} k \| \nabla_h \tilde{\boldsymbol e}^{n, (2)} \|_2^2
		- \frac{\beta}{36} k \| \nabla_h \tilde{\boldsymbol e}^{n, (3)} \|_2^2
		- \frac12 k (  \| \tau_0^n \|_2^2 + \|  \boldsymbol e^{n+1}  \|_2^2 )
		\\
		&
		\le \frac{\tilde{M} k}{6} ( \|  \tilde{\boldsymbol e}^{n, (2)}  \|_2^2  + \|  \tilde{\boldsymbol e}^{n, (3)}  \|_2^2
		+ 2 \|  \tilde{\boldsymbol e}^{n, (4)}  \|_2^2 )
		+ (  \frac{\tilde{2 M}}{3} + \frac{31 \tilde{M}^2}{3 \beta} ) \| \boldsymbol e^{n+1}  \|_\frac83^2  ,
	\end{aligned}
	\label{convergence-4-5}
\end{equation}
and its combination with~\eqref{convergence-3-6} yields
\begin{equation}
	\begin{aligned}
		&
		\| \boldsymbol e^{n+1} \|_2^2 - \| \boldsymbol e^n \|_2^2 + \| \tilde{\boldsymbol e}^{n, (2)} - \boldsymbol e^n \|_2^2
		+ \| \tilde{\boldsymbol e}^{n, (3)} - \tilde{\boldsymbol e}^{n, (2)} \|_2^2
		+ \| \tilde{\boldsymbol e}^{n, (4)} - \tilde{\boldsymbol e}^{n, (3)} \|_2^2
		\\
		&
		+ \| \boldsymbol e^{n+1} - \tilde{\boldsymbol e}^{n, (4)} \|_2^2
		+ \frac{\beta}{72} k \| \nabla_h \tilde{\boldsymbol e}^{n, (2)} \|_2^2
		+ \frac{\beta}{18} k \| \nabla_h \tilde{\boldsymbol e}^{n, (3)} \|_2^2
		+ \frac{\beta}{24} k \| \nabla_h \tilde{\boldsymbol e}^{n, (4)} \|_2^2
		\\
		&
		\le
		\mathcal{C}  k ( \|  \tilde{\boldsymbol e}^{n,(2)}  \|_2^2 + \| \tilde{\boldsymbol e}^{n, (3)} \|_2^2
		+ \| \tilde{\boldsymbol e}^{n, (4)} \|_2^2 ) + \hat{M} k \| \boldsymbol e^{n+1}  \|_\frac83^2
		+ \frac12 k (  \| \tau_0^n \|_2^2 + \|  \boldsymbol e^{n+1}  \|_2^2 ).
	\end{aligned}
	\label{convergence-4-6}
\end{equation}
with $\hat{M} = 	\frac{\tilde{2 M}}{3} + \frac{31 \tilde{M}^2}{3 \beta}$. To control the terms $\hat{M} k \| \boldsymbol e^{n+1}  \|_\frac83^2$, we see that an application of triangle inequality implies that
\begin{equation}
	\begin{aligned}
		&
		\| \boldsymbol e^{n+1}  \|_\frac83  \le  \| \tilde{\boldsymbol e}^{n, (4)} \|_\frac83 + \| \boldsymbol e^{n+1} - \tilde{\boldsymbol e}^{n, (4)} \|_\frac83 ,
		\quad \mbox{so that}
		\\
		&
		\hat{M} \| \boldsymbol e^{n+1}  \|_\frac83^2  \le  2 \hat{M} ( \| \tilde{\boldsymbol e}^{n, (4)} \|_\frac83^2
		+ \| \boldsymbol e^{n+1} - \tilde{\boldsymbol e}^{n, (4)} \|_\frac83^2 ) .
	\end{aligned}
	\label{convergence-4-7}
\end{equation}
Meanwhile, an application of inverse inequality \eqref{inverse-2} (by taking $q = \frac83$, in Lemma~\ref{ccclemC1}) results in
\begin{equation}
	\begin{aligned}
		& 	
		\| \boldsymbol e^{n+1} - \tilde{\boldsymbol e}^{n, (4)}  \|_\frac83
		\le \gamma {h}^{-\frac38 } \| \boldsymbol e^{n+1} - \tilde{\boldsymbol e}^{n, (4)}  \|_2 ,   \quad \mbox{so that}
		\\
		&
		2 \hat{M} k \| \boldsymbol e^{n+1} - \tilde{\boldsymbol e}^{n, (4)}  \|_\frac83^2
		\le 2 \hat{M} \gamma^2 k {h}^{-\frac34 } \| \boldsymbol e^{n+1} - \tilde{\boldsymbol e}^{n, (4)}  \|_2^2
		\le  \frac12 \| \boldsymbol e^{n+1} - \tilde{\boldsymbol e}^{n, (4)}  \|_2^2 ,  	
	\end{aligned}
	\label{convergence-4-8}	
\end{equation}
provided that $2 \hat{M} \gamma^2 k {h}^{-\frac34 } \le \frac12$, which is always valid under the linear refinement requirement, $C_1 h \le k \le C_2 h$, and the assumption that $k$ and $h$ are sufficiently small. In addition, we recall the preliminary estimate~\eqref{convergence-3-4} for $\| \tilde{\boldsymbol e}^{n, (4)}  \|_\frac83$:
\begin{equation}
	\begin{aligned}
		&
		\| \tilde{\boldsymbol e}^{n, (4)}  \|_\frac83 \le C \| \tilde{\boldsymbol e}^{n, (4)} \|_4
		\le  \mathcal{C}  ( \| \tilde{\boldsymbol e}^{n, (4)} \|_2  	
		+   	\| \tilde{\boldsymbol e}^{n, (4)} \|_2^\frac14 \cdot \| \nabla_h \tilde{\boldsymbol e}^{n, (4)} \|_2^\frac34 ) ,  \quad
		\mbox{so that}
		\\
		&
		2 \hat{M}  \| \tilde{\boldsymbol e}^{n, (4)}  \|_\frac83^2
		\le  \mathcal{C}  ( \| \tilde{\boldsymbol e}^{n, (4)} \|_2^2   	
		+   	\| \tilde{\boldsymbol e}^{n, (4)} \|_2^\frac12 \cdot \| \nabla_h \tilde{\boldsymbol e}^{n, (4)} \|_2^\frac32 )
		\\
		&  \qquad \qquad \qquad
		\le  \mathcal{C}  \| \tilde{\boldsymbol e}^{n, (4)} \|_2^2   	
		+   \frac{\beta}{48}	\| \nabla_h \tilde{\boldsymbol e}^{n, (4)} \|_2^2 .      	
	\end{aligned}
	\label{convergence-4-9} 	
\end{equation}
Therefore, a substitution of~\eqref{convergence-4-7}-\eqref{convergence-4-9} into \eqref{convergence-4-6} gives
\begin{equation}
	\begin{aligned}
		&
		\| \boldsymbol e^{n+1} \|_2^2 - \| \boldsymbol e^n \|_2^2 + \| \tilde{\boldsymbol e}^{n, (2)} - \boldsymbol e^n \|_2^2
		+ \| \tilde{\boldsymbol e}^{n, (3)} - \tilde{\boldsymbol e}^{n, (2)} \|_2^2
		+ \| \tilde{\boldsymbol e}^{n, (4)} - \tilde{\boldsymbol e}^{n, (3)} \|_2^2
		\\
		&
		+ \frac12 \| \boldsymbol e^{n+1} - \tilde{\boldsymbol e}^{n, (4)} \|_2^2
		+ \frac{\beta}{72} k \| \nabla_h \tilde{\boldsymbol e}^{n, (2)} \|_2^2
		+ \frac{\beta}{18} k \| \nabla_h \tilde{\boldsymbol e}^{n, (3)} \|_2^2
		+ \frac{\beta}{48} k \| \nabla_h \tilde{\boldsymbol e}^{n, (4)} \|_2^2
		\\
		&
		\le
		\mathcal{C}  k ( \|  \tilde{\boldsymbol e}^{n,(2)}  \|_2^2 + \| \tilde{\boldsymbol e}^{n, (3)} \|_2^2
		+ \| \tilde{\boldsymbol e}^{n, (4)} \|_2^2 )
		+ \frac12 k (  \| \tau_0^n \|_2^2 + \|  \boldsymbol e^{n+1}  \|_2^2 ) .
	\end{aligned}
	\label{convergence-4-10}
\end{equation}
Moreover, by making use of the triangle inequalities
\begin{equation}
	\begin{aligned}
		&
		\|  \tilde{\boldsymbol e}^{n,(2)}  \|_2 \le \| \boldsymbol e^n \|_2 + \| \tilde{\boldsymbol e}^{n, (2)} - \boldsymbol e^n \|_2  ,
		\\
		&
		\|  \tilde{\boldsymbol e}^{n,(3)}  \|_2 \le \| \boldsymbol e^n \|_2 + \| \tilde{\boldsymbol e}^{n, (2)} - \boldsymbol e^n \|_2
		+ \| \tilde{\boldsymbol e}^{n, (3)} - \tilde{\boldsymbol e}^{n, (2)} \|_2 ,
		\\
		&
		\|  \tilde{\boldsymbol e}^{n,(4)}  \|_2 \le \| \boldsymbol e^n \|_2 + \| \tilde{\boldsymbol e}^{n, (2)} - \boldsymbol e^n \|_2
		+ \| \tilde{\boldsymbol e}^{n, (3)} - \tilde{\boldsymbol e}^{n, (2)} \|_2
		+ \| \tilde{\boldsymbol e}^{n, (4)} - \tilde{\boldsymbol e}^{n, (3)} \|_2 ,
	\end{aligned}
	\label{convergence-4-11}
\end{equation}
we arrive at
\begin{equation}
	\begin{aligned}
		&
		\| \boldsymbol e^{n+1} \|_2^2 - \| \boldsymbol e^n \|_2^2
		+ \frac{\beta}{72} k \| \nabla_h \tilde{\boldsymbol e}^{n, (2)} \|_2^2
		+ \frac{\beta}{18} k \| \nabla_h \tilde{\boldsymbol e}^{n, (3)} \|_2^2
		+ \frac{\beta}{48} k \| \nabla_h \tilde{\boldsymbol e}^{n, (4)} \|_2^2
		\\
		&
		\le
		\mathcal{C}  k \|  \boldsymbol e^n  \|_2^2
		+ \frac12 k (  \| \tau_0^n \|_2^2 + \|  \boldsymbol e^{n+1}  \|_2^2 ) ,
	\end{aligned}
	\label{convergence-4-12}
\end{equation}
provided that $k$ is sufficiently small. In turn, an application of discrete Gronwall inequality \cite{Girault1986} yields the desired convergence estimate at the next time step
\begin{equation}
	\| \boldsymbol e^{n+1} \|_2  \le \mathcal{C} (k^2+h^2) ,  \label{convergence-4-13} 	
\end{equation}
based on the fact that $\| \tau_0^n \|^2 \le {\mathcal C} (k^2 + h^2)$. As a result, we see that the a-priori assumption~\eqref{bound-2} has also been validated at the next time step $t^{n+2}$, provided that $k$ and $h$ are sufficiently small. By mathematical induction, this completes the proof of Theorem~\ref{thm: convergence}.
\end{proof}
\vskip2mm
\begin{remark}
	For the multi-step IMEX numerical schemes to various nonlinear PDEs, there have been some existing works of convergence analysis~\cite{cheng16b, gottlieb12b}, etc. Meanwhile, for the IMEX-RK numerical schemes, the theoretical works have been very limited, due to the theoretical challenges associated with the multi-stage nature, lack of sufficient numerical diffusion, etc. A linearized stability analysis is provided for the IMEX-RK1 and IMEX-RK2 schemes for the diffusion problem~\cite{wang2020local}, as well as the error estimate for the constant-coefficient diffusion equation. The convergence analysis has been reported for the IMEX-RK numerical methods to various nonlinear PDEs, such as the convection-diffusion equation~\cite{WangH2016}, the porous media equation~\cite{Nan2021, WangH2019}, etc. On the other hand, the degree of nonlinearity of the LL equation (even the simplified version \eqref{equation-LL-mod}, with only the damping term) is higher than these reported models. As a result, the associated theoretical analysis presented in this article contains more techniques than the existing works.
\end{remark}
\vskip2mm
\begin{remark}
	At the intermediate Runge-Kutta stages, the $\ell^2$ error bound estimate~\eqref{convergence-1-2}, \eqref{convergence-2-7}, \eqref{convergence-3-7}, the associated $\ell^\infty$ and $W_h^{1,8}$ error bounds~\eqref{convergence-1-3}-\eqref{convergence-1-4}, \eqref{convergence-2-8}-\eqref{convergence-2-9}, \eqref{convergence-3-8}-\eqref{convergence-3-9}, stand for a rough error estimate. These error bound estimates do not preserve a full accuracy order; instead, the motivation of these analyses is to obtain a rough bound of the numerical error function, so that a preliminary bound becomes available for the numerical solution at the associated Runge-Kutta stages, as derived in~\eqref{bound-stage 1-1}-\eqref{bound-stage 1-2},  \eqref{bound-stage 2-1}-\eqref{bound-stage 2-2}, \eqref{bound-stage 3-1}-\eqref{bound-stage 3-2}. As a result of these preliminary bounds, the nonlinear error terms could be analyzed with the help of Lemma~\ref{lem: NL error}, so that the nonlinear analysis would go through.
	
	In comparison, with these nonlinear analyses established, the numerical error inequalities~\eqref{convergence-1-1}, \eqref{convergence-2-6}, \eqref{convergence-3-6} stand for a refined error estimate. Finally, the derived estimate~\eqref{convergence-4-12} becomes an inequality in which we are able to derive the desired accuracy order for the numerical error.
	
	A combination of rough error estimate and refined error estimate has been reported for various nonlinear PDEs, such as ternary Flory-Huggins-Cahn-Hilliard system~\cite{Dong2022a}, Poisson-Nernst-Planck system~\cite{LiuC2021a}, the porous medium equation by an energetic variational approach~\cite{duan22b, duan20a}, the reaction-diffusion system with detailed balance~\cite{LiuC2022b}, etc. This article reports the application of such a technique to the LL equation for the first time.
\end{remark}

\vskip2mm
\begin{remark}
	For simplicity, we only consider the damping term in the convergence analysis, as the PDE system formulated as~\eqref{equation-LL-mod}. For the full LL equation~\eqref{eq-3}, the convergence estimate may still go through, under a large damping parameter assumption, combined with a technical requirement of $k = {\mathcal O} (h^2)$; the details are left to interested readers, for the sake of brevity. Of course, such a requirement only stands for a theoretical difficulty, and this restrictive time step constraint is not needed in the practical computation, as demonstrated in extensive numerical experiments.
\end{remark}

\vskip2mm
\begin{remark}
	A theoretical analysis of the IMEX-RK3 numerical algorithm, including the linearized stability estimate and optimal rate convergence analysis, is expected to be even more challenging, due to the complicated diffusion coefficient stencil in the RK stages. This theoretical work will be studied in a forthcoming paper.
\end{remark}




\section{Conclusions} \label{section:conclusion}

In this paper, we propose implicit-explicit Runge-Kutta numerical methods for solving the Landau-Lifshitz equation. By introducing an artificial damping term, IMEX-RK methods only solve a few linear systems  with constant coefficients and SPD structures, regardless of the damping parameter in a magnetic material. Accuracy, efficiency, and the insensitive dependence on the artificial damping parameter of IMEX-RK2 and IMEX-RK3 have been verified in both the 1-D and 3-D computations. Micromagnetics simulations using the full Landau-Lifshitz equation are conducted, including three stable structures and the first benchmark problem from NIST. Reasonable results are generated by the IMEX-RK methods, in qualitative and quantitative agreements with available results. In addition, the linearized stability estimate and optimal rate convergence analysis are presented for an alternate IMEX-RK2 numerical scheme, the SSP-IMEX-RK2 algorithm. This analysis has provided a theoretical evidence of the robust performance of the IMEX-RK schemes.

\section*{Acknowledgments}
This work is supported in part by the grants NSFC 11971021 (J.~Chen) and NSF DMS-2012269 (C.~Wang).





          \end{document}